\newtheorem{thm}{Theorem}
\newtheorem{cor}{Corollary}
\newtheorem{lem}{Lemma}
\newtheorem{definition}{Definition}
\newtheorem{prop}{Proposition}
\theoremstyle{remark}
\newcommand{\nc}{\newcommand}
\nc{\Cal}[1]{{\mathcal {#1}}}
\nc{\Bf}[1]{{\bf {#1}}}
\nc{\Em}[1]{{\em {#1}}}
\nc{\Rm}[1]{{\rm {#1}}}
\nc{\Type}[1]{{\tt {#1}}}
\nc{\nonu}{\nonumber}
\nc{\Headline}[1]{\noindent{\bf {#1}: }}
\nc{\EqnRef}[1]{(\ref{#1})}
\nc{\DefRef}[1]{\Bf{Definition \ref{#1}}}
\nc{\LemRef}[1]{\Bf{Lemma \ref{#1}}}
\nc{\ProRef}[1]{\Bf{Proposition \ref{#1}}}
\nc{\ThmRef}[1]{\Bf{Theorem \ref{#1}}}
\nc{\CorRef}[1]{\Bf{Corollary \ref{#1}}}
\nc{\SecRef}[1]{\Bf{Section \ref{#1}}}
\nc{\RemRef}[1]{\Bf{Remark \ref{#1}}}
\nc{\ChpRef}[1]{\Bf{Chapter \ref{#1}}}
\nc{\defref}[1]{Definition \ref{#1}}
\nc{\lemref}[1]{Lemma \ref{#1}}
\nc{\proref}[1]{Proposition \ref{#1}}
\nc{\thmref}[1]{Theorem \ref{#1}}
\nc{\corref}[1]{Corollary \ref{#1}}
\nc{\secref}[1]{Section \ref{#1}}
\nc{\remref}[1]{Remark \ref{#1}}
\nc{\chpRef}[1]{Chapter \ref{#1}}
\nc{\Proof}{\begin{proof}}
\nc{\EndProof}{\end{proof}}
\nc{\apriori}{{\it a priori }}
\nc{\Apriori}{{\it A priori }}
\nc{\Holder}{H\"{o}lder }
\nc{\Sty}{\displaystyle}
\nc{\MathSty}[1]{$\Sty{#1}$}
\def\Beqn#1\Eeqn{\begin{equation}#1\end{equation}}
\def\upchi{\raise2pt\hbox{$\chi$}}
\def\upnu{\raise0pt\hbox{$\nu$}}
\def\upstroke{\raise3pt\hbox{$|$}}
\nc{\Bdry}{\partial}
\nc{\Abs}[1]{\left|{#1}\right|}
\nc{\Ave}[1]{\bar{#1}}
\nc{\CurBrac}[1]{\left\{{#1}\right\}}
\nc{\curBrac}[1]{\{{#1}\}}
\nc{\Brac}[1]{\left({#1}\right)}
\nc{\brac}[1]{({#1})}
\nc{\SqrBrac}[1]{\left[{#1}\right]}
\nc{\sqrbrac}[1]{[{#1}]}
\nc{\converge}{\rightarrow}
\nc{\Converge}{\longrightarrow}
\nc{\WeakConverge}{\rightharpoonup}
\nc{\Del}{\triangle}
\nc{\Div}{\mbox{\rm{div}}}
\nc{\Equivalent}{\Longleftrightarrow}
\nc{\IntOverR}{\int_{-\infty}^{\infty}}
\nc{\IntOverRp}{\int_{0}^{\infty}}
\nc{\IntOverRm}{\int_{-\infty}^{0}}
\nc{\Imply}{\Longrightarrow}
\nc{\InnProd}[2]{\left\langle{#1},\,{#2}\right\rangle}
\nc{\innprod}[2]{\langle{#1},\,{#2}\rangle}
\nc{\BracInnProd}[2]{\left({#1},\,{#2}\right)}
\nc{\bracInnprod}[2]{({#1},\,{#2})}
\nc{\SqrinnProd}[2]{\left[{#1},\,{#2}\right]}
\nc{\sqrinnprod}[2]{[{#1},\,{#2}]}
\nc{\Lap}{\triangle}
\nc{\Lip}{\mbox{\rm{Lip}}}
\nc{\Lover}[1]{\frac{1}{#1}}
\nc{\Grad}{\nabla}
\nc{\MapTo}{\longrightarrow}
\nc{\Min}{\wedge}
\nc{\Max}{\vee}
\nc{\Norm}[1]{\left\|#1\right\|}
\nc{\norm}[1]{\|#1\|}
\nc{\SingleNorm}[1]{\left|#1\right|}
\nc{\singlenorm}[1]{|#1|}
\nc{\Pair}[2]{\left\langle{#1},\,{#2}\right\rangle}
\nc{\pair}[2]{\langle{#1},\,{#2}\rangle}
\nc{\DD}[1]{\frac{d}{d{#1}}}
\nc{\PDD}[1]{\frac{\partial}{\partial{#1}}}
\nc{\Spt}{\mbox{\rm{spt}}}
\nc{\spt}{\mbox{\rm{spt}}}
\nc{\SuchThat}{\ni}
\nc{\Sum}[2]{\sum_{#1}^{#2}}
\nc{\wtilde}[1]{\widetilde{#1}}
\nc{\Text}[1]{\mbox{#1}}
\nc{\TextMath}[1]{\mbox{\,\,\,{#1}\,\,\,}}
\nc{\intersect}{\cap}
\nc{\Intersect}{\bigcap}
\nc{\union}{\cup}
\nc{\Union}{\bigcup}
\nc{\ColTwo}[2]
{\left(\begin{array}{c}{#1}\\{#2}\end{array}\right)}
\nc{\ColThree}[3]
{\left(\begin{array}{c}{#1}\\{#2}\\{#3}\end{array}\right)}
\nc{\ColFour}[4]
{\left(\begin{array}{c}{#1}\\{#2}\\{#3}\\{#4}\end{array}\right)}
\nc{\ColFive}[5]
{\left(\begin{array}{c}{#1}\\{#2}\\{#3}\\{#4}\\{#5}\end{array}
\right)}
\nc{\coltwo}[2]
{\begin{array}{c}{#1}\\{#2}\end{array}}
\nc{\colthree}[3]
{\begin{array}{c}{#1}\\{#2}\\{#3}\end{array}}
\nc{\colfour}[4]
{\begin{array}{c}{#1}\\{#2}\\{#3}\\{#4}\end{array}}
\nc{\colfive}[5]
{\begin{array}{c}{#1}\\{#2}\\{#3}\\{#4}\\{#5}\end{array}}
\nc{\Vol}[2]{\Bf{#1}({#2})}
\nc{\colorblue}[1]{\textcolor{blue}{#1}}
\nc{\colorBLUE}[1]{\textcolor{blue}{{\bf #1}}}
\nc{\colorred}[1]{\textcolor{red}{#1}}
\nc{\colorRED}[1]{\textcolor{red}{{\bf #1}}}
\def\a{\alpha}
\def\dl{\delta}
\def\d{\partial}
\def\t{\tau}
\def\R{\mathbb{R}}
\def\N{\mathbb{N}}
\def\Div{\text{\rm  div}}
\def\f{\varphi}
\def\Z{\mathbb{Z}}
\def\N{\mathbb{N}}
\nc{\levelsets}{1}
\nc{\setgraph}{2}
\nc{\griddisect}{3}
\author{Oleksandr~\textsc{Misiats}\\
 Department of Mathematics, Purdue University\\
West Lafayette, IN, 47907, USA
\\{\tt omisiats@purdue.edu}
\and Nung Kwan~\textsc{Yip}\\
 Department of Mathematics, Purdue University\\
West Lafayette, IN, 47907, USA
\\{\tt yip@math.purdue.edu}
}
\title{Convergence of Space-Time Discrete Threshold Dynamics to
Anisotropic Motion by Mean Curvature}
\begin{document}
\maketitle
\begin{abstract}
We analyze the continuum limit of a thresholding algorithm for motion by mean
curvature of one dimensional interfaces in various space-time discrete
regimes. The algorithm can be viewed as a time-splitting scheme for the
Allen-Cahn equation which is a typical model for the motion of materials phase
boundaries. Our results extend the existing statements which are applicable
mostly in semi-discrete (continuous in space and discrete in time) settings.
The motivations of this work are twofolds: to investigate the interaction
between multiple small parameters in nonlinear singularly perturbed problems,
and to understand the anisotropy in curvature for interfaces in spatially
discrete environments. In the current work, the small parameters are the
the spatial and temporal discretization step sizes $\triangle x = h$ and
$\triangle t = \t$.
We have identified the limiting description of the interfacial velocity
in the (i) sub-critical ($h \ll \t$),
(ii) critical ($h  = O(\t)$), and
(iii) super-critical ($h \gg \t$) regimes.
The first case gives the classical isotropic motion by mean curvature,
while the second produces intricate pinning and de-pinning phenomena
and anisotropy in the velocity function of the interface. The last case
produces no motion (complete pinning).
\end{abstract}
\section{Introduction and Main Results}
The current paper addresses convergence issues related to
a thresholding scheme for motion by mean curvature. The key is the
analysis of the algorithm in the \Em{space-time discrete} setting in
which there are \Em{two small parameters} - the step sizes in the spatial
and temporal directions. The ultimate results depend on
the relative sizes of these parameters.

The analysis of motion by mean curvature (in which the normal velocity of
a moving manifold is given by its mean curvature) is an active area.
Not only it is interesting in geometry in its own right, it also finds
many applications in materials science and image processing. It is a
prototype of a gradient flow with respect to the area functional.
Due to the possibility of singularity formation and topological changes
of the evolving surface, elaborate approaches need to be used. These include
(i) varifold formulation,
(ii) the viscosity solutions, and
(iii) singularly perturbed reaction diffusion equations.

The thresholding scheme is a particularly simple algorithm to capture the
key feature of (iii). It is essentially a time splitting scheme. The first
step is diffusion while the second step is thresholding to mimick the
fast reaction due to the nonlinear term. This is heuristically proposed in
\cite{M} and rigorously proved in \cite{Evans, Barles} in the continuous
space and discrete time setting. See also the work
\cite{ChenMaxPrin} for an analysis of an reaction diffusion equation
in which both space and time variables are discrete.
However, so far all the rigorous results essentially works in the case
when the interfacial structure is well-resolved. We call this the
``sub-critical'' regime. When this is not the case, intricate
pinning and depinning of the interface can happen.
This is analogous to a gradient flow in a \Em{highly wiggling} or
\Em{oscillatory energy landscape}.
The motion also demonstrates anisotropy of the normal velocity.
The motivation of the current paper is to capture these phenomena
quantitatively and relate them to the underlying small parameters
in the algorithm.

The most relevant reaction diffusion equation for motion by mean curvature
is the following Allen-Cahn equation:
\Beqn\label{ACeqn}
\frac{\partial u}{\partial t} = \Lap u - \Lover{\epsilon^2}W'(u)
\Eeqn
In the above $W$ is the double well potential $W(u) = (1-u^2)^2$ and
$\epsilon$ is a small parameter.
The qualitative behavior of the solution is that the
underlying ambient space is quickly partitioned into two domains on which
$u$ takes on the values $1$ and $-1$ which are the minima of $W$.
The function $u$ also makes a smooth but rapid transition with thickness
$O(\epsilon)$ between the two domains. The key is then
to understand the dynamics of this transition layer, in the limit of
$\epsilon\Converge 0$. It is proved in various settings that the limiting
motion is motion by mean curvature \cite{KohnBronsard, deMottoniSchatzman,
ChenMaxPrin, IlmanenAC, ESS}.


As the thresholding scheme is very simple to implement and describe,
we embark on its analysis demonstrating the interplay between two
small parameters. The scheme is a time splitting approach
to solve \eqref{ACeqn} (in the regime $\epsilon\ll 1$).
Given an initial shape $\Omega_0$,
and its boundary (or often called the interface) $\Gamma_0 = \Bdry\Omega_0$,
a sequence of functions
$\CurBrac{u_{k}}_{k\geq 0}$ is constructed in the following manner:
for $k=0$, we define
\[
u_0(x) = \mathbf{1}_{\Omega_0}(x) - \mathbf{1}_{\Omega_0^c}
= \left\{
\begin{array}{lll}
1 & \text{for} & x\in\Omega_0,\\
-1 & \text{for} & x\in\Omega_0^c,
\end{array}
\right.
\]
then the following two steps are alternately performed
(for $k = 0, 1, 2, \ldots$),
\begin{itemize}
\item	\Bf{diffusion step:}
\Beqn\label{threshold1}
\begin{array}{ll}
\frac{\partial v}{\partial t} = \Lap v & \text{for}\,\,\,\,\,\,
0 < t < \t;\\
v(x, 0) = u_{k}(x)
\end{array}
\Eeqn

\item	\Bf{thresholding step:}
\Beqn\label{threshold2}
u_{k+1}(x) = \text{sign}(v(x,\t))
\Eeqn
\end{itemize}
Note that the second step above is to mimick the fast reaction term
which drives $u$ to $1$ or $-1$, the minima of $W$. The solution of the
problem is captured by the sequence of subsets where $u_k$ attains the value
$1$, $\CurBrac{x: u_k(x) = 1}$. Precisely,
we define the time dependent set and interface as
\[
\Omega^{\t}(t) = \CurBrac{x: u_k(x) \geq 0,\,\,\,\text{for}\,\,\,
k\t \leq t < (k+1)\t}\,\,\,\text{and}\,\,\,
\Gamma^{\t}(t) = \Bdry\Omega^{\t}(t).
\]
Then as $\t\Converge 0$, $\Omega^{\t}(t)$
(or $\Gamma^{\t}(t)$) has been shown to converge to motion by
mean curvature in the viscosity setting \cite{Evans, Barles}.


Now we describe some notations and the algorithm for the space-time discrete
version of the above thresholding scheme \eqref{threshold1} and
\eqref{threshold2}.
Let $\Omega \subset \R^2$ be a bounded, smooth domain, and
$\Gamma = \d\Omega$ be its boundary. Let $h>0$ be the spatial discretization
step size. Define
\Beqn
\Omega^h :=\{ (m, n) \in \Z^2: {\rm dist}[(nh, mh), \Omega] \leq h\}
\Eeqn
which are the indices of the lattice points inside $\Omega$.
Let again $\t>0$ be the size of the time step. Given an initial set
$\Omega_0$ and its discrete version $\Omega_0^h$, the discrete
thresholding scheme produces
$\CurBrac{u^{m,n}_{k}}_{k\geq 0, (m,n) \in \Z^2}$ as follows.
Let
\begin{equation}\label{scheme-11}
u^{m,n}_0 =
\mathbf{1}_{\Omega_0^h}(m,n) -
\mathbf{1}_{(\Omega_0^h)^c}(m,n)
= \left\{
\begin{array}{lll}
1 & \text{for} & (m,n)\in\Omega_0^h,\\
-1 & \text{for} & (m,n)\in(\Omega_0^h)^c,
\end{array}
\right.
\end{equation}
For simplicity, we will use $u$ to denote the discrete function
$\CurBrac{u^{m,n}: (m,n)\in\Z^2}$.
Then similar to the continuous space case,
the following two steps are alternately performed
(for $k=0, 1, 2, \ldots$):
\begin{itemize}
\item	\Bf{solution of semi-discrete heat equation}:
for $(m, n) \in \Z^2$ and $0 < t \leq \t$,
\begin{equation}\label{semidiscrete}
\begin{cases}
\frac{d}{dt}w^{m,n} (t) = \frac{1}{h^2}
[w^{m+1, n}(t) + w^{m-1,n}(t) + w^{m, n+1}(t) + w^{m,n-1}(t) - 4 w^{m,n}(t)], \\
w^{m,n}(0)  = u^{m,n}_k
\end{cases}
\end{equation}

\item	\Bf{thresholding step}:
for $n, m \in \Z$,
\begin{equation}\label{defn_of_uk}
u_{k+1}^{m,n} := {\rm sign} [w^{m,n}(\t)]
= {\rm sign} \left(\Brac{S^h(\t)[u_k]}^{m,n}\right),
\end{equation}
where the $S^h(\cdot)$ is the solution operator of \eqref{semidiscrete},
i.e. $w(t) = S^h(t)[u_k]$.
\end{itemize}


%

Note that the above space time discrete scheme involves two small
parameter $\t > 0$ and $h > 0$. Assume that $\t$ and $h$
are related through
\begin{equation}\label{relation}
h = C (\t)^\gamma, \text{ where } \gamma > 0 \text{ and } C > 0
\end{equation}
Three major cases are possible:
\begin{description}
\item[\hspace{80pt}Case 1.] $\gamma > 1$, i.e. $h \ll \t$,
called ``subcritical''.

\item[\hspace{80pt}Case 2.] $\gamma = 1$, i.e. $\t = \mu h$,
where $\mu = const$, called ``critical''.
\item[\hspace{80pt}Case 3.] $\gamma < 1$, i.e. $h \gg \t$,
called ``supercritical''.
\end{description}
\noindent
Roughly speaking, the main result in Case 1 is that the level curves of a
discrete heat equation move according to the motion by mean curvature.
This gives the same result as \cite{Barles}.
Case 2 gives a version of anisotropic curvature dependent motion
which demonstrates pinning of the interface when the curvature of the
interface is too small. In Case 3, there is no motion at all.

\subsection{Curvature Dependent Motion and Viscosity Solutions}
As mentioned earlier, singularities and topological changes can occur for
motion by mean curvature. Different mathematical approaches are invented
to define the solution for all time. Due to the presence of maximum
principle, we find the viscosity solution to be
the most suitable and convenient for our problem. We spend a moment to
briefly describe this method which can produce a
``unique'' global in time solution.

The essential idea is to represent
the moving interface $\Gamma_t$ as the zero level set of
a function $u(x,t)$:
\Beqn\label{FrontZeroSet}
\Gamma_t = \CurBrac{x: u(x,t) = 0}
\Eeqn
The function $u$ is thus often called the \Em{level set function}. It solves
an appropriate partial differential equations related to the motion law of
$\Gamma_t$.
The main result in this approach is that in the space of uniformly continuous
functions, there is a unique solution $u$ and the set $\Gamma_t$ does not
depend on the initial data $u(\cdot, 0)$ as long as it correctly captures
the interior and exterior domains of $\Gamma_0$.
On the other hand, this set-up does not a priori ensure that $\Gamma_t$
corresponds to a manifold in any geometric sense. This can happen
if $\Gamma_t$ has positive $n$-dimensional Lebesgue measure in which
case $\Gamma_t$ is said to
\Em{fatten} or develop \Em{non-empty interior}.
It also means that the solution of the geometric evolution can be
non-unique
as $\Bdry\CurBrac{x: u(x,t) > 0} \neq \Bdry\CurBrac{x: u(x,t) < 0}$.
Conditions preventing this from happening
are discussed in \cite{SIAMControl}. On the other hand,
a definition of \Em{generalized front} is used so that
a ``unique solution'' can be defined. This approach defines the interface
as the following triplet of objects:
\Beqn\label{GenFrontDef}
\Gamma_t = \CurBrac{x: u(x,t)=0},\,\,\,\,\,\,
D^+_t = \CurBrac{x: u(x,t)>0},\,\,\,\,\,\,
D^-_t = \CurBrac{x: u(x,t)<0}.
\Eeqn
\begin{center}
\includegraphics [width = 1.8in, angle = 270]{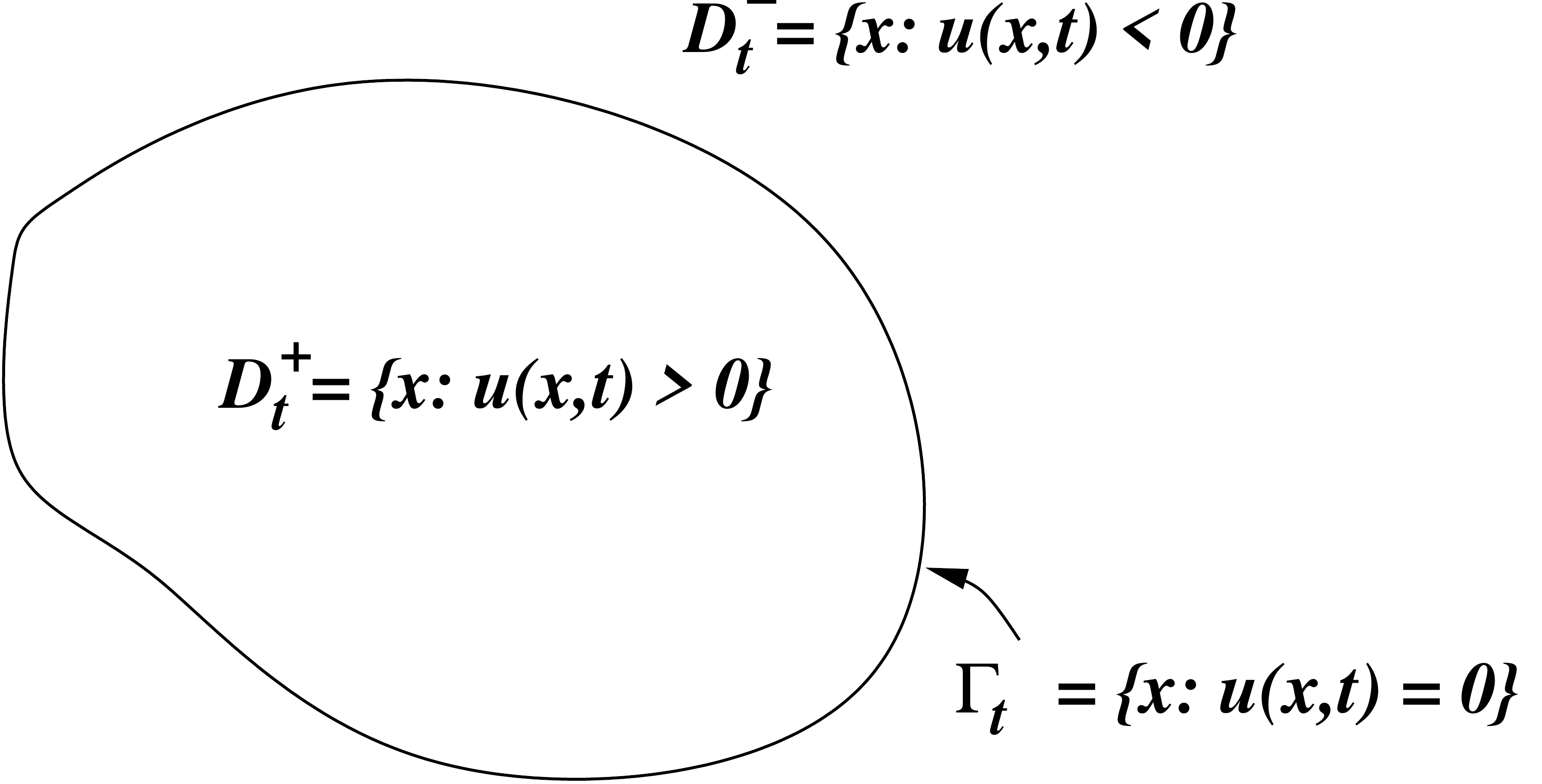}\\
Figure \levelsets.
\end{center}
(See Fig. \levelsets.)
Then $D^+_t$ ($D^-_t$) is called the \Em{interior (exterior)} of the front
$\Gamma_t$.
It is shown that the map:
\Beqn\label{GenFrontMap}
E_t: (\Gamma_0, D_0^+, D_0^-) \MapTo (\Gamma_t, D_t^+, D_t^-)
\Eeqn
is well defined. We refer to \cite{SouganidisBarlesARMA}
for a more detailed description.

Next we describe the equation for general curvature dependent front
propagation. We follow the exposition in \cite{Pires}.
Given an interface (hypersurface) $\Gamma$ in $\R^n$.
We consider its motion described by a normal velocity function $V$
of the following form:
\Beqn
V = V(D\nu, \nu)
\Eeqn
where $\nu$ is the unit (outward) normal of $\Gamma$.
(Note that $D\nu$ is a symmetric matrix.) The above motion law is sometimes
called \Em{anisotropic curvature motion}.
If $V= \text{div}(Dn)$, then the motion is called \Em{(isotropic)
motion by mean curvature}. If we want to represent the moving interface by
\eqref{FrontZeroSet} or \eqref{GenFrontDef}, then the function $u$ needs
to solve the following partial differential equation:
\Beqn\label{GenFrontEqn}
u_t + F(D^2u, Du) = 0
\Eeqn
where the function $F$ is related to $V$ in the following way:
\Beqn
F(X,p) = -\Abs{p}V\Brac{
-\Abs{p}^{-1}
\Brac{I - \bar{p}\times\bar{p}}X\Brac{I - \bar{p}\times\bar{p}},
-\bar{p}
}
\Eeqn
where $\bar{q}=\Abs{p}^{-1}p$.
In order for the viscosity solution approach to work, the following
monotonicity condition for $V$ is crucial:
\Beqn
\text{$V$ is nondecreasing, i.e.
for all $X\leq Y$ and $p$, then $V(X,p)\geq V(Y,p)$.
}
\Eeqn
The above property is translated to the function $F$ as
\Beqn
\text{for all $X\leq Y$ and $p$, then $F(X,p)\geq F(Y,p)$.
}
\Eeqn

With the above set-up, then it can be shown that equation
\eqref{GenFrontEqn} is well-posed in the space of
\Em{uniformly continuous functions}.
Given an initial manifold $\Gamma_0 = \Bdry\Omega_0$,
a usual choice of the initial data $u(x,0) = u_0(x)$ for \EqnRef{GenFrontEqn}
is given by the sign distance function to $\Gamma_0$:
\Beqn\label{InitSignDist}
d_0(x) = \text{sdist}(x, \Gamma_0)
= \left\{
\begin{array}{ll}
\text{dist}(x,\Gamma_0), & x\in\Omega_0,\\
-\text{dist}(x,\Gamma_0), & x\in\Omega_0^c,
\end{array}
\right.
\Eeqn
The map $E_t$ in \eqref{GenFrontMap} is independent of any
uniformly continuous initial data as long as it has the same sign
as $d_0$: $u_0 > 0$ on $\Omega_0$ and $u_0 < 0$ on $\Omega_0^c$.

The definition of viscosity solution of \eqref{GenFrontEqn} is given in
Section \ref{ViscDefConv} where a general approach to prove convergence
of various approximating schemes is also given.
Of the \Em{stability} and \Em{consistency} conditions generally
required for most convergence proof, for the current algorithm, the former
is quite easy to satisfy by means of maximum principle. The crux
of the matter is the latter condition which is the key result of our paper
for the case of space time discrete thresholding scheme. Once we have this,
then we can more or less quote the general convergence result.

\subsection{Main result: Sub-Critical Case}\label{ViscDefConv}
Our most complete result is the sub-critical case for which we can prove
convergence to motion by mean curvature in the viscosity sense.
In this case, the velocity function is given by $V(Dn,n) =\text{Div}(Dn)$.
Hence \EqnRef{GenFrontEqn} becomes
\begin{equation}\label{meancurv}
\frac{\partial u}{\partial t}
= |\nabla u|\text{div}\Brac{\frac{\nabla u}{|\nabla u|}}
= \Delta u - \frac{(D^2u Du| Du)}{|Du|^2}
\end{equation}
For convenience, we give the definition of viscosity solution specifically
for this case.

\begin{definition} A locally bounded upper semicontinuous (usc) function
(respectively, lower semicontinuous (lsc)) function $u$
is a viscosity subsolution (respectively, supersolution) of
\eqref{meancurv}, if for all $\phi\in C^2(\R^N\times(0,+\infty))$,
and if $(x,t)\in\R^N\times(0,+\infty)$
is a local maximum point of $u-\phi$, then one has
\Beqn\label{SubNonDeg}
\frac{\partial \phi}{\partial t}(x,t)
-\Brac{\Lap\phi- \frac{\Brac{D^2\phi D\phi | D\phi}}{\Abs{D\phi}^2}
}(x,t)\leq 0,\,\,\,\,\,\,\text{if $D\phi(x,t)\neq 0$,}
\Eeqn
and
\Beqn\label{SubDeg}
\frac{\partial \phi}{\partial t}(x,t)
-\Lap\phi(x,t) + \lambda_{\text{min}}(D^2\phi(x,t))\leq 0,
\,\,\,\,\,\,\text{if $D\phi(x,t)=0$,}
\Eeqn
where $\lambda_{\text{min}}(D^2\phi(x,t))$ is the least eigenvalue
of $D^2\phi(x,t)$.
(Respectively, if for all $\phi\in C^2(\R^N\times(0,+\infty))$,
and if $(x,t)\in\R^N\times(0,+\infty)$
is a local minimum point of $u-\phi$, then one has
\Beqn\label{SuperNonDeg}
\frac{\partial \phi}{\partial t}(x,t)
-\Brac{\Lap\phi- \frac{\Brac{D^2\phi D\phi | D\phi}}{\Abs{D\phi}^2}
}(x,t)\geq 0,\,\,\,\,\,\,\text{if $D\phi(x,t)\neq 0$,}
\Eeqn
and
\Beqn\label{SuperDeg}
\frac{\partial \phi}{\partial t}(x,t)
-\Lap\phi(x,t) + \lambda_{\text{max}}(D^2\phi(x,t))\geq 0,
\,\,\,\,\,\,\text{if $D\phi(x,t)=0$,}
\Eeqn
where $\lambda_{\text{max}}(D^2\phi(x,t))$ is the maximum (or principle)
eigenvalue of $D^2\phi(x,t)$.)
\end{definition}
On the other hand, a simpler characterization can be given.
\begin{prop}\label{comp}\cite[Prop. 2.2]{Barles}
A locally bounded upper semicontinuous (usc) function $u$ is a
viscosity subsolution (respectively supersolution) of (\ref{meancurv})
iff if satisfies \eqref{SubNonDeg} and
\begin{equation}\label{consistency2sub}
\frac{\d \varphi}{\d t}(x,t) \leq 0 \ \text{ if } D \varphi(x,t) = 0 \ \text{ and } D^2 \varphi(x,t) = 0,
\tag{\ref{SubDeg}'}
\end{equation}
respectively, \eqref{SuperNonDeg} and
\begin{equation}\label{consistency2super}
\frac{\d \varphi}{\d t}(x,t) \geq 0 \ \text{ if } D \varphi(x,t) = 0 \ \text{ and } D^2 \varphi(x,t) = 0,
\tag{\ref{SuperDeg}'}
\end{equation}
\end{prop}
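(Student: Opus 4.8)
My plan is to dispatch the ``only if'' direction trivially and to obtain \eqref{SubDeg} from \eqref{SubNonDeg} and \eqref{consistency2sub} by a tilting/limiting argument (and symmetrically for supersolutions). For the easy direction: a subsolution in the sense of the Definition satisfies \eqref{SubNonDeg} by hypothesis, and if $D\varphi(x,t)=0$ and $D^2\varphi(x,t)=0$ then \eqref{SubDeg} reads $\partial_t\varphi(x,t)\le 0$, which is \eqref{consistency2sub}. For the converse, assume $u$ usc satisfies \eqref{SubNonDeg} and \eqref{consistency2sub}; I would fix $\varphi\in C^2$ and a local maximum point $(x_0,t_0)$ of $u-\varphi$ with $D\varphi(x_0,t_0)=0$, set $A:=D^2\varphi(x_0,t_0)$ and $a:=\partial_t\varphi(x_0,t_0)$, observe that if $A=0$ then \eqref{SubDeg} is exactly \eqref{consistency2sub}, and otherwise aim for the sharp inequality $a-\tr A+\lambda_{\min}(A)\le 0$. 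First I would add $|x-x_0|^4+|t-t_0|^2$ to $\varphi$ — which alters neither $a$, nor $D\varphi(x_0,t_0)=0$, nor $A$ — to make $(x_0,t_0)$ a \emph{strict} local maximum of $u-\varphi$.

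\textbf{Strategy: tilt, re-localize, pass to the limit.}
The mechanism is that the mean-curvature operator $F(X,p)=-\tr\!\big(X(I-\widehat p\otimes\widehat p)\big)$ has lower semicontinuous envelope $F_*(X,0)=-\tr X+\lambda_{\min}(X)=\inf_{|e|=1}\lim_{\rho\downarrow0}F(X,\rho e)$, the infimum being attained as $p\to0$ \emph{along a minimal eigenvector} of $X$. So I would let $e$ be a unit eigenvector of $A$ for $\lambda_{\min}(A)$ and, for small $\delta>0$, test $u$ with $\varphi_\delta(x,t):=\varphi(x,t)+\delta\langle x-x_0,e\rangle$. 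Since $\varphi_\delta-\varphi$ is affine in $x$, one has $\partial_t\varphi_\delta\equiv\partial_t\varphi$, $\Lap\varphi_\delta\equiv\Lap\varphi$, $D^2\varphi_\delta\equiv D^2\varphi$, while $D\varphi_\delta=D\varphi+\delta e$. The tilt kills the maximum at $(x_0,t_0)$, but because the quartic localization $|x-x_0|^4$ is isotropic, $u-\varphi_\delta$ still attains its maximum over a fixed small ball about $(x_0,t_0)$ at an interior point $(x_\delta,t_\delta)$, and the usual compactness/upper-semicontinuity arguments (using strictness of the original maximum) give $(x_\delta,t_\delta)\to(x_0,t_0)$ as $\delta\downarrow0$. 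Then $D\varphi(x_\delta,t_\delta)\to D\varphi(x_0,t_0)=0$, so $D\varphi_\delta(x_\delta,t_\delta)=D\varphi(x_\delta,t_\delta)+\delta e\to0$ and is nonzero for small $\delta$; applying \eqref{SubNonDeg} to $\varphi_\delta$ at $(x_\delta,t_\delta)$ and using the invariances above yields
\[
\partial_t\varphi(x_\delta,t_\delta)-\Lap\varphi(x_\delta,t_\delta)+\big\langle D^2\varphi(x_\delta,t_\delta)\,\widehat q_\delta,\widehat q_\delta\big\rangle\le 0,
\qquad
\widehat q_\delta:=\frac{D\varphi_\delta(x_\delta,t_\delta)}{\,|D\varphi_\delta(x_\delta,t_\delta)|\,}.
\]
Letting $\delta\downarrow0$, continuity of the derivatives of $\varphi$ gives $\partial_t\varphi(x_\delta,t_\delta)\to a$, $\Lap\varphi(x_\delta,t_\delta)\to\tr A$, $D^2\varphi(x_\delta,t_\delta)\to A$; once one knows $\langle A\widehat q_\delta,\widehat q_\delta\rangle\to\lambda_{\min}(A)$, i.e.\ that $\widehat q_\delta$ approaches a minimal eigendirection of $A$, the limit is $a-\tr A+\lambda_{\min}(A)\le 0$, which is \eqref{SubDeg}. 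The supersolution assertion follows by the mirror argument: minima for maxima, lsc for usc, a \emph{maximal} eigenvector in place of $e$, the opposite sign in the tilt, and \eqref{SuperNonDeg}, \eqref{consistency2super} in place of \eqref{SubNonDeg}, \eqref{consistency2sub}.

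\textbf{Main obstacle.}
The delicate step — and the real content of \cite[Prop. 2.2]{Barles} — is justifying that $\widehat q_\delta$ converges to a minimal eigendirection of $A$. Since $D\varphi_\delta(x_\delta,t_\delta)=A(x_\delta-x_0)+o(|x_\delta-x_0|)+\delta e$, this amounts to pinning the drift $x_\delta-x_0$ to the direction $e$, and it cannot be achieved by a naive quadratic penalty in the orthogonal directions: such a penalty forces $\Lap\varphi_\delta\to+\infty$, making the limiting inequality vacuous. Instead one must balance the linear tilt against the isotropic quartic localization — augmented, if necessary, by an auxiliary localization that is flat along $e$ — and estimate the size and direction of $x_\delta-x_0$ carefully, separating the cases $\lambda_{\min}(A)>0$, $\lambda_{\min}(A)=0$ and $\lambda_{\min}(A)<0$ so that the $\delta e$ term is never cancelled by $A(x_\delta-x_0)$. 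Granting this bookkeeping (which is exactly what \cite{Barles} carries out), everything else is soft: compactness for the existence of $(x_\delta,t_\delta)$, upper semicontinuity for its convergence, and Taylor's theorem for the passage to the limit.
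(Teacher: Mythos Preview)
The paper does not prove this proposition; it is simply quoted from \cite{Barles}, so there is no argument in the paper to compare against. Evaluating your proposal on its own:

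Your tilting strategy is correct, but you have manufactured a difficulty that is not there. You claim the ``delicate step --- and the real content of \cite[Prop.~2.2]{Barles}'' is to show that $\widehat q_\delta$ converges to a \emph{minimal} eigendirection of $A$, and your final paragraph is devoted to worrying about how to force this. But look at the inequality you already have after applying \eqref{SubNonDeg} at $(x_\delta,t_\delta)$:
\[
\partial_t\varphi(x_\delta,t_\delta)-\Lap\varphi(x_\delta,t_\delta)+\big\langle D^2\varphi(x_\delta,t_\delta)\,\widehat q_\delta,\widehat q_\delta\big\rangle\le 0.
\]
For \emph{any} unit vector $\widehat q_\delta$ the Rayleigh bound gives
$\big\langle D^2\varphi(x_\delta,t_\delta)\,\widehat q_\delta,\widehat q_\delta\big\rangle \ge \lambda_{\min}\big(D^2\varphi(x_\delta,t_\delta)\big)$,
and this inequality points the \emph{right} way for a subsolution: replacing the Rayleigh quotient by $\lambda_{\min}$ only decreases the left-hand side, so
\[
\partial_t\varphi(x_\delta,t_\delta)-\Lap\varphi(x_\delta,t_\delta)+\lambda_{\min}\big(D^2\varphi(x_\delta,t_\delta)\big)\le 0
\]
already holds \emph{before} any limit is taken. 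Now send $\delta\to 0$: continuity of the derivatives of $\varphi$, the fact that the quartic penalty contributes only $O(|x_\delta-x_0|^2)$ to the Hessian, and continuity of $\lambda_{\min}(\cdot)$ on symmetric matrices give $a-\tr A+\lambda_{\min}(A)\le 0$, which is \eqref{SubDeg}. No information about the direction of $\widehat q_\delta$ is used; there is no need to tilt along a minimal eigenvector, and your entire ``main obstacle'' paragraph --- anisotropic penalties, the case split on the sign of $\lambda_{\min}(A)$, the concern that $A(x_\delta-x_0)$ might cancel $\delta e$ --- can be deleted. The supersolution case is symmetric via $\langle A\widehat q,\widehat q\rangle\le\lambda_{\max}(A)$.

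The one step you \emph{do} gloss over is the claim that $D\varphi_\delta(x_\delta,t_\delta)\neq 0$ for small $\delta$, which is what licenses the use of \eqref{SubNonDeg} rather than the degenerate test. Since $D\varphi_\delta(x_\delta,t_\delta)=D\varphi(x_\delta,t_\delta)+4|x_\delta-x_0|^2(x_\delta-x_0)+\delta e$ and the first two terms tend to zero, cancellation with $\delta e$ is not obviously excluded. This is a genuine but routine point (handled, for instance, by perturbing the tilt parameter along a sequence on which the gradient does not vanish); it deserves a sentence, whereas the direction of $\widehat q_\delta$ deserves none.
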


The consistency proof of the thresholding scheme relies on the following
result
\begin{prop}\cite[Prop 4.1]{Barles}
If $\Brac{\phi_h}_{h}$ is a sequence of smooth functions bounded in $C^{2,1}$
and converging locally in $C^{2,1}$ to a function $\phi$ and $(x_h, t_h)$ is
a sequence of points converging to $(x,t)\in\R^N\times(0,\infty)$ such that
$\phi_h(x_h,t_h)=0$, then if $D\phi(x,t)\neq 0$,
\begin{multline}
\liminf_h\Lover{h^{\Lover{2}}}\Brac{\Lover{2}
-\Lover{(4\pi h)^{\frac{N}{2}}}\int_{\CurBrac{\phi_h(\cdot, t_h - h)\geq 0}}
\exp\Brac{-\frac{\Abs{x_h-y}^2}{4h}}\,dy}\\
\geq
\Lover{2\sqrt{\pi}\Abs{D\phi(x,t)}}\Brac{
\frac{\partial\phi}{\partial t} - \Lap\phi +
\frac{\Brac{D^2\phi D\phi | D\phi}}{\Abs{D\phi}^2}
}(x,t).
\end{multline}
Moreoever, if $D\phi(x,t)=0$ and $D^2\phi(x,t)=0$ and if the inequality
\Beqn
\Lover{2}
-\Lover{(4\pi h)^{\frac{N}{2}}}\int_{\CurBrac{\phi_h(\cdot, t_h - h)\geq 0}}
\exp\Brac{-\frac{\Abs{x_h-y}^2}{4h}}\,dy\leq 0
\Eeqn
holds for a sequence of $h$ converging to $0$, then
\Beqn
\frac{\partial\phi}{\partial t}(x,t)\leq 0
\Eeqn
\end{prop}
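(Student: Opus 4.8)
The plan is to reduce both assertions to the asymptotics, as $h\to0$, of the Gaussian average $I_h:=(4\pi h)^{-N/2}\int_{\{\phi_h(\cdot,\,t_h-h)\ge 0\}}\exp(-\Abs{x_h-y}^2/4h)\,dy$ evaluated at the interface point $x_h$. The substitution $y=x_h+\sqrt h\,z$ turns this into $(4\pi)^{-N/2}\int_{E_h}e^{-\Abs z^2/4}\,dz$ with $E_h=\{z:\phi_h(x_h+\sqrt h\,z,\,t_h-h)\ge 0\}$. Since $\phi_h(x_h,t_h)=0$, $(x_h,t_h)\to(x,t)$, and the family $\{\phi_h,D\phi_h,D^2\phi_h,\partial_t\phi_h\}$ converges uniformly on a compact neighbourhood of $(x,t)$, Taylor-expanding $\phi_h$ in space about $x_h$ at time $t_h-h$ and using $\phi_h(x_h,t_h-h)=-h\,\partial_t\phi_h(x_h,t_h)+o(h)$ gives, uniformly on bounded $z$-sets,
\[
\phi_h(x_h+\sqrt h\,z,\,t_h-h)= -h\,a_h+\sqrt h\,p_h\cdot z+\tfrac h2\,(A_h z\,|\,z)+o(h),
\]
where $p_h=D\phi_h(x_h,t_h-h)$, $A_h=D^2\phi_h(x_h,t_h-h)$, $a_h=\partial_t\phi_h(x_h,t_h)$ converge respectively to $D\phi(x,t)$, $D^2\phi(x,t)$, $\partial_t\phi(x,t)$.

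For the first assertion ($D\phi(x,t)\neq 0$) I divide the expansion by $\sqrt h\,\Abs{p_h}$: with $\bar p_h=p_h/\Abs{p_h}$, the set $E_h$ is $\{z:\bar p_h\cdot z\ge\frac{\sqrt h}{\Abs{p_h}}(a_h-\tfrac12(A_hz\,|\,z))+o(\sqrt h)\}$, a half space whose boundary is an $O(\sqrt h)$ perturbation of the hyperplane $\{\bar p_h\cdot z=0\}$. Since $\tfrac12=(4\pi)^{-N/2}\int_{\{\bar p_h\cdot z\ge 0\}}e^{-\Abs z^2/4}\,dz$, the quantity $\tfrac12-I_h$ equals $(4\pi)^{-N/2}$ times the Gaussian integral over the symmetric difference of $\{\bar p_h\cdot z\ge 0\}$ and $E_h$, a slab of width $O(\sqrt h)$; slicing $z=s\bar p_h+z'$ and integrating in $s$ (where $e^{-\Abs z^2/4}=e^{-\Abs{z'}^2/4}(1+O(h))$) gives $\tfrac12-I_h=\frac{\sqrt h}{\Abs{p_h}(4\pi)^{N/2}}\int_{\{\bar p_h\}^{\perp}}(a_h-\tfrac12(A_hz'\,|\,z'))e^{-\Abs{z'}^2/4}\,dz'+o(\sqrt h)$. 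Using $\int_{\{\bar p_h\}^{\perp}}(A_hz'\,|\,z')e^{-\Abs{z'}^2/4}\,dz'=2\,\tr(A_h^{\perp})(4\pi)^{(N-1)/2}$, with $A_h^{\perp}$ the compression of $A_h$ to $\{\bar p_h\}^{\perp}$ and $\tr(A_h^{\perp})=\Lap\phi_h-(A_hp_h\,|\,p_h)/\Abs{p_h}^2$, together with $(4\pi)^{N/2}=2\sqrt\pi(4\pi)^{(N-1)/2}$, collapses everything to
\[
\frac1{\sqrt h}\Brac{\tfrac12-I_h}=\frac{1}{2\sqrt\pi\,\Abs{p_h}}\Brac{a_h-\Lap\phi_h+\frac{(A_hp_h\,|\,p_h)}{\Abs{p_h}^2}}+o(1),
\]
and letting $h\to 0$ yields the asserted identity, hence a fortiori the $\liminf$ inequality.

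For the second assertion ($D\phi(x,t)=0$, $D^2\phi(x,t)=0$) I would argue by contradiction. Assume $\partial_t\phi(x,t)=2c>0$, so $a_h>c$ for the relevant $h$ and $\phi_h(x_h,t_h-h)<0$, while $\Abs{p_h}=:\varepsilon_h\to 0$ and $A_h\to 0$. Dividing the expansion by $h$, membership $z\in E_h$ forces $\frac{p_h\cdot z}{\sqrt h}+\tfrac12(A_hz\,|\,z)+o(1)\ge a_h\ge c$; splitting on whether the first term is $\ge c/2$, every $z\in E_h$ lies either in the shifted half space $\{\bar p_h\cdot z\ge\frac{c\sqrt h}{2\varepsilon_h}\}$ (Gaussian mass $\tfrac12-\eta_h$ with $\eta_h>0$, and $\eta_h\to\tfrac12$ when $\varepsilon_h\ll\sqrt h$), or in the set where $(A_hz\,|\,z)\gtrsim c$. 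Because $A_h$ is $D^2\phi_h$ evaluated between $x_h$ and $x_h+\sqrt h\,z$ and $D^2\phi(x,t)=0$, the latter set is pushed out to large $\Abs z$ and hence has small Gaussian mass; one thus gets $I_h<\tfrac12$ for small $h$, contradicting $I_h\ge\tfrac12$, so $\partial_t\phi(x,t)\le 0$.

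The formal steps above are short; the real work — and the main obstacle — is the uniformity of the error terms. For the first part one must verify that the Taylor remainders are $o(h)$ \emph{uniformly} over the Gaussian-weighted region, controlling the tails $\{\Abs z>R\}$ from the local $C^{2,1}$ convergence and $(x_h,t_h)\to(x,t)$ alone. For the second part the delicate regime is $\varepsilon_h\gtrsim\sqrt h$, where the shifted half space has Gaussian mass only $\tfrac12-O(\sqrt h/\varepsilon_h)$: one must then show the exceptional set has Gaussian mass $o(\sqrt h/\varepsilon_h)$, which forces the localization radius to grow with $h$ and makes essential use of $D^2\phi(x,t)=0$ to keep the quadratic term negligible on that enlarged region. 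Everything else is routine Gaussian integral bookkeeping.
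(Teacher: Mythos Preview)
The paper does not prove this proposition; it is quoted from \cite[Prop.~4.1]{Barles} and used as a black box in the proof of Theorem~\ref{P1.Case1}. There is therefore no in-paper proof to compare your attempt against. (The discussion of Cases I--III at the end of Section~3 is not a proof of the proposition but an adaptation of the Barles argument to absorb the extra error $M_h$ coming from the discrete-to-continuous comparison.)

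That said, your outline is the standard one and matches the argument in the cited reference: rescale $y=x_h+\sqrt h\,z$, Taylor expand about $(x_h,t_h)$, and in the nondegenerate case read off the $O(\sqrt h)$ correction as a Gaussian slab integral. Your slab computation via the orthogonal splitting $z=s\bar p_h+z'$ and the identity $\tr(A_h^\perp)=\Lap\phi_h-(A_hp_h\,|\,p_h)/\Abs{p_h}^2$ is correct and produces the right constant $1/(2\sqrt\pi)$. For the degenerate case your trichotomy on $\varepsilon_h/\sqrt h$ is exactly the case split the paper later invokes (Cases I--III) when applying the result.

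You have correctly identified where the work lies. Two points worth sharpening if you want a complete proof: (i) the Taylor remainder is $o(h)$ only on bounded $z$-sets, so you need a separate tail estimate on $\{\Abs z>R\}$ using only the global $C^{2,1}$ bound together with the Gaussian decay; (ii) in the degenerate case, your dichotomy ``either $\bar p_h\cdot z\ge c\sqrt h/(2\varepsilon_h)$ or $(A_hz\,|\,z)\gtrsim c$'' tacitly assumes the $o(1)$ remainder is uniform in $z$, which again fails on the tail --- this is precisely the regime $\varepsilon_h\sim\sqrt h$ you flagged, and resolving it requires letting the localization radius $R=R(h)\to\infty$ slowly while using $D^2\phi(x,t)=0$ to keep the quadratic term small on $\{\Abs z\le R\}$. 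None of this needs a new idea beyond what you have written; it is careful bookkeeping.
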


With the above preparation, we are ready to present our result.
We start by introducing
\begin{equation}\label{subsol}
\underline{u}(x,t) = \liminf_{k\t \to t; \\
(m h, n h) \to x} u_k^{m,n}
\end{equation}
and
\begin{equation}\label{supsol}
\bar{u}(x,t) = \limsup_{k \t \to t; \\
(m h, n h) \to x} u_k^{m,n}
\end{equation}
\begin{thm}[Sub-critical case]\label{P1.Case1}
Assume $\gamma > 1$. Then the functions $\bar{u}(x,t)$ and  $\underline{u}(x,t)$ are viscosity subsolutions and supersolutions of (\ref{meancurv}), respectively.
\end{thm}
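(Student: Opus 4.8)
The plan is to run the Barles--Souganidis convergence programme for geometric front equations on the scheme \eqref{semidiscrete}--\eqref{defn_of_uk}: show it is monotone and weakly stable, and prove it is \emph{consistent} with \eqref{meancurv}; the half-relaxed limits $\underline u,\bar u$ of \eqref{subsol}--\eqref{supsol} are then automatically a supersolution and a subsolution, consistency being the only substantial point. Since \eqref{meancurv} and the scheme are both odd under $u\mapsto -u$ (the operator $S^h(\t)$ is linear and $\mathrm{sign}$ is odd), I would only prove that $\bar u$ is a subsolution and deduce the statement for $\underline u$ by applying this to the datum $-u_0$. Stability and the basic structure are immediate: the fundamental solution of \eqref{semidiscrete} is $p^h(\t;m,n)=e^{-4\t/h^{2}}I_{m}(2\t/h^{2})\,I_{n}(2\t/h^{2})>0$ with $\sum_{(m,n)\in\Z^{2}}p^h(\t;m,n)=1$ ($I_m$ the modified Bessel functions), so $S^h(\t)$ is order preserving and fixes constants; as $\mathrm{sign}$ is nondecreasing, the one-step map $u_k\mapsto u_{k+1}$ is order preserving, $u_k^{m,n}\in\{-1,1\}$, and $\bar u$ (resp.\ $\underline u$) is a well-defined usc (resp.\ lsc) function valued in $\{-1,1\}$.

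For consistency I would argue in the usual way. Fix $\phi\in C^{2}$ and a point $(x_0,t_0)$ at which $\bar u-\phi$ has a local maximum, which after a routine perturbation may be assumed strict; then there are $h_j\to 0$ and indices $(m_j,n_j,k_j)$ with $(m_jh_j,n_jh_j,k_j\t_j)\to(x_0,t_0)$ maximizing $(m,n,k)\mapsto u_k^{m,n}-\phi(mh_j,nh_j,k\t_j)$ over a fixed space--time neighbourhood, so that $u_{k_j}^{m_j,n_j}\to\bar u(x_0,t_0)$. By Proposition~\ref{comp} it is enough to verify \eqref{SubNonDeg} when $D\phi(x_0,t_0)\ne 0$, and $\partial_t\phi(x_0,t_0)\le 0$ when $D\phi(x_0,t_0)=D^{2}\phi(x_0,t_0)=0$. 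If $\bar u(x_0,t_0)=-1$, then $\{\bar u=-1\}$ being open forces $\bar u\equiv -1$ near $(x_0,t_0)$, so $(x_0,t_0)$ is a local minimum of $\phi$ and both conditions hold trivially; thus I may assume $\bar u(x_0,t_0)=1$, $\phi(x_0,t_0)=1$ and $u_{k_j}^{m_j,n_j}=1$ for $j$ large. Because $u_k^{p,q}\in\{-1,1\}$ and $\sum p^h(\t;\cdot)=1$, the condition ``$u_{k+1}^{m,n}\ge 0$'' is equivalent to $\sum_{(p,q)\in A_k}p^h(\t;m-p,n-q)\ge\tfrac12$, where $A_k:=\{(p,q):u_k^{p,q}\ge 0\}$: the thresholding step is a \emph{discrete} Gaussian averaging of $\mathbf{1}_{A_k}$. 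Put $\beta_j:=1-\phi(m_jh_j,n_jh_j,k_j\t_j)\to 0$ and $\phi_{h_j}:=\phi-(1-\beta_j)$, so $\phi_{h_j}(m_jh_j,n_jh_j,k_j\t_j)=0$, $\phi_{h_j}\to\phi-1$ locally in $C^{2,1}$, and $\phi-1$ vanishes at $(x_0,t_0)$ with the same gradient, Hessian and time derivative there as $\phi$; the maximum property then gives $u_{k_j-1}^{p,q}\le\phi_{h_j}(ph_j,qh_j,(k_j-1)\t_j)+1$ for lattice points in a fixed neighbourhood of $x_0$, hence $A_{k_j-1}\subset B_j:=\{(p,q):\phi_{h_j}(ph_j,qh_j,(k_j-1)\t_j)\ge 0\}$ there, and since $u_{k_j}^{m_j,n_j}=1$ and the $p^{h_j}$-mass outside any fixed neighbourhood of $x_j:=(m_jh_j,n_jh_j)$ is exponentially small, $\sum_{(p,q)\in B_j}p^{h_j}(\t_j;m_j-p,n_j-q)\ge\tfrac12-o(\sqrt{\t_j})$.

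The crux, and the step I expect to be the main obstacle, is a local central limit comparison between the semi-discrete heat kernel and the continuous Gaussian $G_\t(z):=(4\pi\t)^{-1}e^{-|z|^{2}/(4\t)}$. Concretely I would prove that, uniformly for $x=(mh,nh)$ in a bounded set and for every set $R\subset\R^{2}$ whose boundary is, near $x$, a $C^{2}$ curve of controlled curvature lying within $O(h)$ of the boundary of the cell-union $\bigcup_{(p,q)\in A}Q_{(p,q)}$ ($Q_{(p,q)}$ the closed $h\times h$ square centred at $(ph,qh)$),
\[
\sum_{(p,q)\in A}p^h(\t;m-p,n-q)\;=\;\int_{R}G_\t(x-y)\,dy\;+\;O\!\Bigl(\tfrac{h}{\sqrt{\t}}\Bigr)+O\!\Bigl(\tfrac{h^{2}}{\t}\Bigr).
\]
The route is to replace $p^h(\t;k,l)$ by $h^{2}G_\t(kh,lh)$ using the uniform local central limit asymptotics of $e^{-z}I_m(z)$ as $z=2\t/h^{2}\to\infty$ (legitimate since $\t/h^{2}=C^{-2}\t^{1-2\gamma}\to\infty$ because $\gamma>1$), and then to read the result as a midpoint Riemann sum for $\int_R G_\t(x-\cdot)$: the $O(h^{2}/\t)$ is the combined Bessel/quadrature error and the $O(h/\sqrt{\t})$ is the $G_\t$-mass of the $O(h)$-wide tube separating the cell-union from $R$ (length $O(\sqrt{\t})$, height $O(\t^{-1})$). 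Crucially, \emph{both error terms are $o(\sqrt{\t})$ precisely when $\gamma>1$}, i.e.\ when $h/\t\to 0$ --- this is exactly where the sub-critical hypothesis enters --- and the delicate points are the uniformity of the Bessel asymptotics over the whole mass-carrying range of $m$ and the tube/quadrature bookkeeping for an interface resolved only to scale $h$.

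Granting this estimate, the conclusion follows as in \cite{Barles}. When $D\phi(x_0,t_0)\ne 0$, the region $R=\{\phi_{h_j}(\cdot,(k_j-1)\t_j)\ge 0\}$ has the required boundary near $x_0$ for $j$ large, so $\tfrac12\le\int_{\{\phi_{h_j}(\cdot,\,k_j\t_j-\t_j)\ge 0\}}G_{\t_j}(x_j-y)\,dy+o(\sqrt{\t_j})$; dividing by $\sqrt{\t_j}$, taking $\liminf_j$, and inserting this into the first part of \cite[Prop. 4.1]{Barles} (read with its ``$h$'', ``$x_h$'', ``$t_h$'' equal to $\t_j$, $x_j$, $k_j\t_j$, and $\phi_{h_j}\to\phi-1$) forces
\[
\Bigl(\frac{\partial\phi}{\partial t}-\Lap\phi+\frac{\Brac{D^{2}\phi\, D\phi\,|\,D\phi}}{\Abs{D\phi}^{2}}\Bigr)(x_0,t_0)\le 0,
\]
which is \eqref{SubNonDeg}. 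When $D\phi(x_0,t_0)=D^{2}\phi(x_0,t_0)=0$, I would argue by contradiction from $\partial_t\phi(x_0,t_0)=\theta>0$: a second-order Taylor expansion of $\phi_{h_j}(\cdot,k_j\t_j-\t_j)$ (using $D\phi,D^{2}\phi\to 0$ at $x_0$, $k_j\t_j-\t_j<t_0$, and $h_j\ll\t_j$) shows $\{\phi_{h_j}(\cdot,k_j\t_j-\t_j)\ge 0\}$ pulls away from $x_j$ at scale $\sqrt{\t_j}$, so a crude Gaussian upper bound $p^{h_j}(\t_j;k,l)\le(1+o(1))\,h_j^{2}G_{\t_j}(kh_j,lh_j)+(\text{negligible tail})$ makes $\sum_{(p,q)\in B_j}p^{h_j}(\t_j;\cdot)$ fall below $\tfrac12$ by a fixed amount, contradicting the lower bound of the previous paragraph; this case needs a little more care than the previous one but rests on the same two ingredients (the heat-kernel comparison and the rapid decay of $G_\t$). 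Hence $\partial_t\phi(x_0,t_0)\le 0$, and $\bar u$ is a viscosity subsolution of \eqref{meancurv}.
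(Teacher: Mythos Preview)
Your proposal is correct and follows essentially the same strategy as the paper: both reduce the discrete thresholding step to a Gaussian-averaging inequality via the Bessel-function asymptotics for the semi-discrete heat kernel, compare the discrete super-level set with its continuum analogue up to an $o(\sqrt{\t})$ error (this is exactly where $\gamma>1$ is used), and then feed the result into \cite[Prop.~4.1]{Barles}. The only minor deviations are that the paper keeps a logarithmic factor in the Bessel error, obtaining $M_h=O\!\bigl(\tfrac{h}{\sqrt{\t}}\ln\tfrac{h}{\sqrt{\t}}\bigr)$ rather than your $O(h/\sqrt{\t})+O(h^{2}/\t)$, and in the degenerate case $D\phi=D^{2}\phi=0$ it follows the three-subcase analysis of \cite{Barles} (splitting on the behaviour of $\sqrt{\t}/|D\phi(m_hh,n_hh;k_\t\t)|$) rather than your direct contradiction argument---both routes work and require the same ingredient, namely that the extra discrete-to-continuous error is $o(\sqrt{\t})$.
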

A few remarks about the consequence of the above statement.
\begin{enumerate}
\item
Let $u$ be the solution of \eqref{meancurv} with the initial data $u_0$ given
for example by \eqref{InitSignDist}. Then the sets $\Gamma_t$, $D_t^+$,
and $D_t^-$ produced by the map $E_t$ \eqref{GenFrontMap} is well-defined.
As shown in \cite[Thm. 1.2]{Barles}, the above statement implies that
\begin{equation}
\underline{u}(x,t) = 1 \text{ in } D_t^+ \text{ and } \bar{u}(x,t)  = -1 \text{ in } D_t^-,
\end{equation}
In other words,
\[
\Bdry\CurBrac{x: \underline{u}(x,t) \geq 1},\,\,\,
\Bdry\CurBrac{x: \bar{u}(x,t) \leq -1} \subset \Gamma_t.
\]
It is in this sense that we say the zero level set of $u_k$ in
the limit moves according to the motion by mean curvature.
Note that in general we can only say the the limiting interface is
\Em{contained in but might not equal to} $\Gamma_t$. See the next item of
remark.

\item
As we are dealing with \Em{discontinuous} initial data and functions
$u_k$, $k\geq 0$, so in general the limit (as $h, \t \Converge 0$)
can be non-unique.
In fact, we can infer using \cite[Thm. 1.1, Cor. 1.3]{Barles} that
we have a unique limit if
\[
\Union_{t>0} \Gamma_t\times\curBrac{t}
= \Bdry\CurBrac{(x,t): u(x,t) > 0}
= \Bdry\CurBrac{(x,t): u(x,t) < 0}
\]
i.e. with no fattening phenomena, and in which case the boundary of the
zero level set for $u_k$ converges to
$\Union_{t>0} \Gamma_t\times\curBrac{t}$ in the sense of Hausdorff
distance.
\end{enumerate}

\subsection{Main Results: Critical and Super-Critical Cases}
We next describe the results in the critical case,
i.e. $\t = \mu h$ for fixed $\mu>0$.
To concentrate on the key ideas of our approach, we assume that locally
near the origin, the boundary of the initial set $\Omega_0$ is represented
by a graph. More specifically, we assume that for some $c_0> 0$,
\label{setgraph}
\Beqn
\Omega_0\intersect(-c_0, c_0)\times(-c_0, c_0)
= \Big\{(x,y): \Abs{x}\leq c_0: -c_0 < y \leq f(x) < c_0\Big\}
\Eeqn
where  $f(x)$ is a $C^2$, even-function satisfying
\Beqn
f(0) = f'(0) = 0,\,\,\,
f''(0) = - \kappa \leq 0.
\Eeqn
The quantity $\kappa$ represents the curvature of $\Bdry\Omega_0$ at $(0,0)$. (See Fig. \setgraph.)
\begin{center}
\includegraphics [width=1.5in, angle = 270]{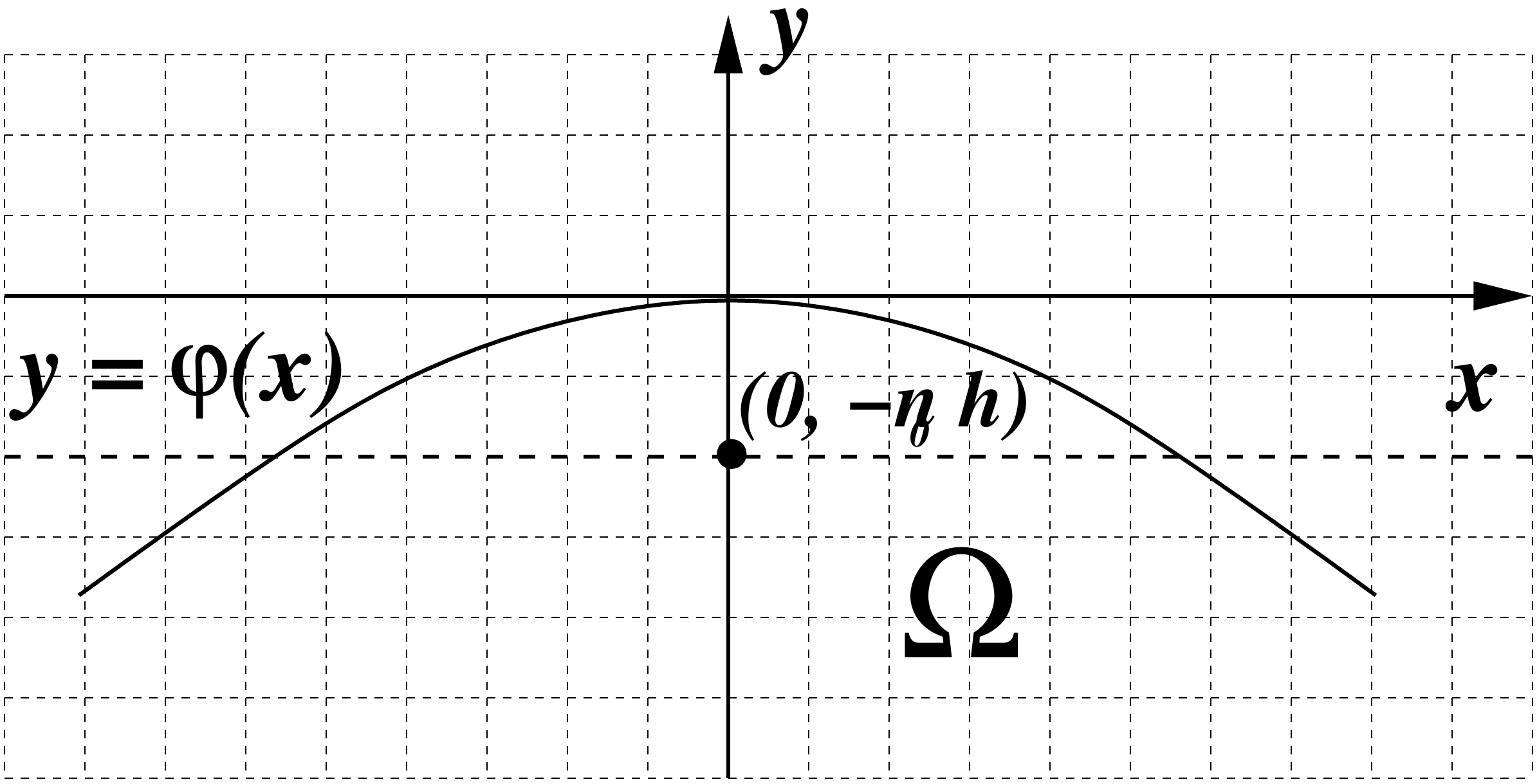}\\
Figure \setgraph.
\end{center}
For simplicity of presentation of the results in this section, we consider an equivalent initialization $\tilde{u}^{m,n}_0:=\frac{1}{2}+ \frac{1}{2} {u}^{m,n}_0$ and the threshholding step $\tilde{u}^{m,n}_{k+1}:=\frac{1}{2}+ \frac{1}{2} {u}^{m,n}_k$, where $\tilde{u}^{m,n}_0$ and $\tilde{u}^{m,n}_{k+1}$ are given by (\ref{scheme-11}) and  (\ref{defn_of_uk}) respectively. In other words, we replace the ``$-1$ $\&$ $1$'' thresholding scheme with a ``$0$ $\&$  $1$'' one. This new scheme will still be denoted with $u^{m,n}_0$ and ${u}^{m,n}_{k+1}$. Consider the function $w$ obtained from \eqref{semidiscrete} for $k=0$.
Let $n_0\in\Z$ be such that
\begin{equation}\label{def_n0}
w^{0,-n_0}(\t) \leq \frac{1}{2},\,\,\,\text{and}\,\,\,
w^{0,-n_0 - 1}(\t) > \frac{1}{2}
\end{equation}
For simplicity, we call $n_0$ the ``discrete normal velocity''
of the boundary point $(0,0) \in \d \Omega_0$. The true physical normal velocity
$V$ is defined as:
\Beqn
V
= \lim_{h,\t\converge 0}\frac{n_0 h}{\t}
= \lim_{h,\t\converge 0}\frac{n_0}{\mu}
\Eeqn
We will show that $V$ exists and is still given by a function of the curvature
$\kappa$. Precisely,


\begin{thm}[Critical case]\label{Th2}
Assume $\kappa\neq 0$.
For sufficiently small $\t>0$, $w^{0,-n}(\t) \leq \frac{1}{2}$ holds
if and only if
\Beqn
\sum_{k = 1}^{n - 1} \int_{0}^{\sqrt{\frac{2k}{\mu \kappa}}} e^{- \frac{x^2}{4}} dx + \frac{1}{2} \int_{0}^{\sqrt{\frac{2 n}{\mu \kappa}}} e^{- \frac{x^2}{4}} dx \leq \frac{1}{2} \int_{\sqrt{\frac{2 n}{\mu \kappa}}}^{\infty} e^{- \frac{x^2}{4}} dx + \sum_{k = n + 1}^{\infty} \int_{ \sqrt{\frac{2 k}{\mu \kappa}}}^{\infty} e^{- \frac{x^2}{4}} dx.
\Eeqn
In particular, if $w^{0,-n_0}(\t) = \frac{1}{2}$, then
\begin{equation}\label{eq_key}
\sum_{k = 1}^{n_0 - 1} \int_{0}^{\sqrt{\frac{2k}{\mu \kappa}}} e^{- \frac{x^2}{4}} dx + \frac{1}{2} \int_{0}^{\sqrt{\frac{2 n_0}{\mu \kappa}}} e^{- \frac{x^2}{4}} dx = \frac{1}{2} \int_{\sqrt{\frac{2 n_0}{\mu \kappa}}}^{\infty} e^{- \frac{x^2}{4}} dx + \sum_{k = n_0 + 1}^{\infty} \int_{ \sqrt{\frac{2 k}{\mu \kappa}}}^{\infty} e^{- \frac{x^2}{4}} dx.
\end{equation}
\end{thm}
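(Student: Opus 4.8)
The plan is to compute $w^{0,-n}(\t)$ asymptotically as $h\to0$ with $\t=\mu h$ and to read the stated (in)equality off the sign of the first non-vanishing term; this is a direct computation and does not use the viscosity machinery of the previous subsections. Assume $\kappa>0$ (the case $\kappa<0$ follows from the symmetry $u\mapsto 1-u$, $\Omega_0\mapsto\Omega_0^c$). With the $0$--$1$ normalisation, the $w^{m,n}(\t)$ of \eqref{def_n0} is $\bigl(S^h(\t)[\mathbf{1}_{\Omega_0^h}]\bigr)^{m,n}$, and the semi-discrete heat flow \eqref{semidiscrete} is, after rescaling time, the transition semigroup of a continuous-time nearest-neighbour random walk on $\Z^2$, hence has the explicit factorised kernel
\[
\bigl(S^h(t)[v]\bigr)^{m,n}=\sum_{(p,q)\in\Z^2}e^{-2\rho}\,I_{m-p}(\rho)\,I_{n-q}(\rho)\,v^{p,q},\qquad \rho:=\frac{2t}{h^2},
\]
with $I_j$ the modified Bessel function and mass identity $\sum_{j\in\Z}e^{-\rho}I_j(\rho)=1$; at $t=\t=\mu h$ one has $\rho=2\mu/h\to\infty$. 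Since $\sum_{(p,q)\in\Z^2}e^{-2\rho}I_p(\rho)I_{n+q}(\rho)=1$, the inequality $w^{0,-n}(\t)\le\tfrac12$ is equivalent to
\[
\sum_{(p,q)\in\Omega_0^h}e^{-2\rho}I_p(\rho)I_{n+q}(\rho)\ \le\ \sum_{(p,q)\notin\Omega_0^h}e^{-2\rho}I_p(\rho)I_{n+q}(\rho),
\]
i.e. the walk launched from the lattice point $n$ steps inside $\Omega_0$ (along the inward normal at the apex $(0,0)$) is at least as likely to land outside $\Omega_0^h$ as inside.

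I would then feed in two asymptotic facts. (a) \emph{Local limit theorem for $I_j$:} $e^{-\rho}I_j(\rho)=(2\pi\rho)^{-1/2}e^{-j^2/(2\rho)}\bigl(1+o(1)\bigr)$, uniformly for $|j|\le C\rho^{1/2}$, with super-polynomially small total contribution from $|j|\gg\rho^{1/2}$. (b) \emph{Local shape of $\Omega_0^h$:} since $f(x)=-\tfrac\kappa2x^2+o(x^2)$, for each fixed $k\ge1$ the lattice line ``$k$ steps inside'' meets $\Omega_0^h$ in a symmetric segment of half-width $P_k$ with $P_k h=\sqrt{2kh/\kappa}\,(1+o(1))$, hence $P_k/\sqrt{\rho}\to a_k:=\sqrt{2k/(\mu\kappa)}$; the lines on the exterior side are essentially empty, and lattice points of $\Omega_0^h$ at fixed distance from the origin contribute negligibly because the walk spreads only over the physical scale $\sqrt{\t}=\sqrt{\mu h}\to0$. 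Combining (a) and (b), summing over such a line is a Riemann sum,
\[
\sum_{|p|\le P_k}e^{-\rho}I_p(\rho)\ \longrightarrow\ \frac1{\sqrt\pi}\int_0^{a_k}e^{-x^2/4}\,dx,\qquad \sum_{|p|>P_k}e^{-\rho}I_p(\rho)\ \longrightarrow\ \frac1{\sqrt\pi}\int_{a_k}^{\infty}e^{-x^2/4}\,dx,
\]
the two adding to $1$ because $\int_0^\infty e^{-x^2/4}dx=\sqrt\pi$; and with the transverse weight $e^{-\rho}I_{n-k}(\rho)$,
\[
w^{0,-n}(\t)=\frac1{\sqrt\pi}\sum_{k\ge1}e^{-\rho}I_{n-k}(\rho)\int_0^{a_k}e^{-x^2/4}\,dx+(\text{lower order}).
\]

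Because $e^{-\rho}I_{n-k}(\rho)\to0$ for each fixed $k$, we get $w^{0,-n}(\t)\to\tfrac12$, so the content is the \emph{sign} of the $O(\rho^{-1/2})$ correction. To isolate it I would split $\int_0^{a_k}e^{-x^2/4}dx=\sqrt\pi-\int_{a_k}^\infty e^{-x^2/4}dx$: the $\sqrt\pi$-part gives $\sqrt\pi\sum_{k\ge1}e^{-\rho}I_{n-k}(\rho)=\sqrt\pi\sum_{l\le n-1}e^{-\rho}I_l(\rho)=\tfrac{\sqrt\pi}2+\sqrt\pi\bigl(\tfrac12e^{-\rho}I_0(\rho)+\sum_{l=1}^{n-1}e^{-\rho}I_l(\rho)\bigr)$, while the remaining part converges absolutely (the tail integrals decay geometrically in $k$) and equals $(2\pi\rho)^{-1/2}\sum_{k\ge1}\int_{a_k}^\infty e^{-x^2/4}dx+o(\rho^{-1/2})$ by (a). Hence
\[
w^{0,-n}(\t)=\frac12+\frac1{\sqrt{2\pi\rho}}\Bigl[\,n-\tfrac12-\tfrac1{\sqrt\pi}\sum_{k\ge1}\int_{a_k}^{\infty}e^{-x^2/4}\,dx\,\Bigr]+o\bigl(\rho^{-1/2}\bigr).
\]
For $n$ away from the borderline, the sign of the bracket therefore decides $w^{0,-n}(\t)\le\tfrac12$ for all sufficiently small $\t$; and using $\int_0^{a_k}e^{-x^2/4}dx+\int_{a_k}^\infty e^{-x^2/4}dx=\sqrt\pi$ with $a_k=\sqrt{2k/(\mu\kappa)}$, elementary algebra rewrites ``bracket $\le0$'' as precisely the inequality in the theorem (the factors $\tfrac12$ on the $n$-th integrals coming from the split $\sum_{l\le0}e^{-\rho}I_l(\rho)=\tfrac12(1+e^{-\rho}I_0(\rho))$, i.e. from halving the central weight). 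The borderline case---bracket $=0$---forces $w^{0,-n}(\t)\to\tfrac12$ and is what is recorded as the identity \eqref{eq_key}.

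The hard part will be that every quantity of interest is of size $\rho^{-1/2}=(h/2\mu)^{1/2}\to0$, so all the $o(1)$'s above have to be upgraded to genuine $o(\rho^{-1/2})$ estimates. Concretely I would need: (i) the expansion (a) uniform on $|j|\le C\rho^{1/2}$ with \emph{relative} error $o(\rho^{-1/2})$ (a quantitative form of the local limit theorem, i.e. uniform Bessel asymptotics with error control); (ii) control that replacing $\Omega_0^h$ by its osculating parabola, and the one-lattice-layer ambiguity built into the ``$\mathrm{dist}\le h$'' definition of $\Omega_0^h$ (the floors $\lfloor f(ph)/h\rfloor$), each perturb $w^{0,-n}(\t)$ only by $o(\rho^{-1/2})$, which rests on $o(x^2)$ being $o(h)$ at the heat scale $x\sim\sqrt{\mu h}$ together with summing the per-line errors; and (iii) a dominating function, uniform in $h$, for the interchange of the infinite $k$-sum with $h\to0$, available because $\int_{a_k}^\infty e^{-x^2/4}dx=O\bigl(e^{-k/(2\mu\kappa)}\bigr)$. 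Once (i)--(iii) are established, the algebra of the preceding paragraph delivers the asserted equivalence and, at equality, \eqref{eq_key}.
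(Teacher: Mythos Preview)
Your approach is correct and is in substance the same as the paper's: express $w^{0,-n}(\t)$ via the factorised Bessel kernel, sum over horizontal lattice lines, replace each line sum by an error-function integral using the local limit asymptotics for $e^{-\rho}I_j(\rho)$, and read off the sign of the $O(\rho^{-1/2})=O(\sqrt{h})$ correction. Your compact bracket $n-\tfrac12-\tfrac{1}{\sqrt\pi}\sum_{k\ge1}\int_{a_k}^\infty e^{-x^2/4}\,dx$ is exactly $\tfrac{1}{\sqrt\pi}\Phi(n,\mu,\kappa)$ in the paper's notation \eqref{Psi}, and the paper states the same expansion $w^{0,-n}(\t)=\tfrac12+\tfrac{1}{\sqrt{4\pi}}\Phi\sqrt{h}+o(\sqrt{h})$ in \eqref{summation}.

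The organisational differences are minor. Where you cancel the leading $\tfrac12$ by the algebraic split $\int_0^{a_k}=\sqrt\pi-\int_{a_k}^\infty$ together with $\sum_{l\le n-1}G_l=\tfrac12+\tfrac12G_0+\sum_{l=1}^{n-1}G_l$, the paper does the equivalent thing geometrically, partitioning the lattice into regions $I_1,I_2,I_3,bI_1,bI_2$ and subtracting the half-space identity \eqref{in1}. More substantively, where you propose dominated convergence (your item (iii)) to pass the limit $h\to0$ through the infinite $k$-sum, the paper instead proves the two implications separately: for the direction $w\ge\tfrac12\Rightarrow$\eqref{lower.bd} it truncates the tail to $N$ terms (each a finite sum, so the asymptotics apply termwise) and then lets $N\to\infty$; for $w\le\tfrac12\Rightarrow$\eqref{upper.bd} it replaces the transverse weight by the uniform bound $G_j(\a)\le(h/4\pi\mu)^{1/2}$ and controls the remaining single sum via the summable estimate $\int_{a_k}^\infty e^{-x^2/4}dx=O(e^{-k/(2\mu\kappa)})$ (equations \eqref{q1}--\eqref{q3}). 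This truncate/a-priori-bound split is a slightly more hands-on version of your dominated-convergence argument, and avoids having to track the $o(\rho^{-1/2})$ errors uniformly in $k$. Your items (i)--(iii) are precisely what the paper supplies as Proposition~1 (the Meissel expansion \eqref{expansion_key}), Lemma~\ref{L3.DecayProp} (far-field decay), and the Riemann-sum bounds \eqref{RSest1D}, \eqref{q1}--\eqref{q3}.
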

Note that discrete velocity $n_0$ is related to $\mu$ and $\kappa$
implicitly. Nevertheless, it is straightforward to see from (\ref{eq_key}) that if $\kappa_1 < \kappa_2$, we have $n_0(\mu, \kappa_1) \leq n_0(\mu, \kappa_2)$.   It can also be seen that if
$\kappa$ is small enough, then $n_0 = 0$, i.e. the interface is
\Em{pinned}. Using monotonicity, the above result
can be extended to the case $\kappa=0$ giving $n_0=0$.

We next show that the result in the critical case is
consistent with the sub-critical case when $\mu \to \infty$:
\begin{thm}\label{Th3}
Let $n_0$, $\mu$ and $\kappa$ satisfy (\ref{def_n0}). Then
\[
\lim_{\mu \to \infty} \frac{n_0}{\mu} = \kappa,
\]
i.e. we get the mean curvature motion as in the subcritical case.
\end{thm}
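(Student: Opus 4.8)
The plan is to turn the implicit characterization of $n_0$ in Theorem~\ref{Th2} into an explicit near-formula, recognize the associated infinite series as a Riemann sum, and read off the limit as $\mu\to\infty$.

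First I would rewrite the inequality in Theorem~\ref{Th2} in a much more transparent form. Set $\delta=\frac{2}{\mu\kappa}$ and, for $k\ge 1$, put $b_k=\int_{\sqrt{k\delta}}^{\infty}e^{-x^2/4}\,dx$, so that $\int_{0}^{\sqrt{k\delta}}e^{-x^2/4}\,dx=\sqrt\pi-b_k$ because $\int_{0}^{\infty}e^{-x^2/4}\,dx=\sqrt\pi$. Substituting this identity into the left-hand side of the inequality of Theorem~\ref{Th2} and cancelling, the condition $w^{0,-n}(\t)\le\frac12$ becomes, after elementary algebra,
\[
\Brac{n-\tfrac12}\sqrt\pi\;\le\;\Sigma(\delta),\qquad
\Sigma(\delta):=\sum_{k=1}^{\infty}\int_{\sqrt{k\delta}}^{\infty}e^{-x^2/4}\,dx .
\]
The left-hand side is strictly increasing in $n$, so the defining relations \eqref{def_n0} ($w^{0,-n_0}(\t)\le\frac12$ and $w^{0,-n_0-1}(\t)>\frac12$), applied in the critical regime $\t=\mu h\to 0$ where Theorem~\ref{Th2} is valid, say exactly that
\[
\Brac{n_0-\tfrac12}\sqrt\pi\;\le\;\Sigma(\delta)\;<\;\Brac{n_0+\tfrac12}\sqrt\pi,
\qquad\text{hence}\qquad
\Abs{\,n_0-\frac{\Sigma(\delta)}{\sqrt\pi}\,}\le\frac12 .
\]

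Next I would compute the asymptotics of $\Sigma(\delta)$ as $\delta\to 0^{+}$. Writing $g(t)=\int_{\sqrt t}^{\infty}e^{-x^2/4}\,dx$, the function $g$ is positive, decreasing, and $g(0)=\sqrt\pi$; hence $\Sigma(\delta)=\sum_{k\ge1}g(k\delta)$ is squeezed between two integrals,
\[
\int_{\delta}^{\infty}g(t)\,dt\;\le\;\delta\,\Sigma(\delta)\;\le\;\int_{0}^{\infty}g(t)\,dt ,
\]
and $\int_{0}^{\delta}g(t)\,dt\le\delta g(0)=\delta\sqrt\pi$. By Fubini,
\[
\int_{0}^{\infty}g(t)\,dt=\int_{0}^{\infty}e^{-x^2/4}\brac{\int_{0}^{x^2}dt}dx=\int_{0}^{\infty}x^2e^{-x^2/4}\,dx=2\sqrt\pi ,
\]
so $2\sqrt\pi-\delta\sqrt\pi\le\delta\,\Sigma(\delta)\le 2\sqrt\pi$, i.e. $\Abs{\Sigma(\delta)/\sqrt\pi-2/\delta}\le 1$.

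Finally, combining the two boxed/displayed bounds and using $2/\delta=\mu\kappa$ gives $\Abs{n_0-\mu\kappa}\le\tfrac32$, hence $\Abs{n_0/\mu-\kappa}\le\tfrac{3}{2\mu}\converge 0$ as $\mu\to\infty$, which is the claim. The only point requiring care is the first step: correctly collapsing the two one-sided sums in Theorem~\ref{Th2} into the single tail series $\Sigma(\delta)$ and verifying monotonicity in $n$, so that \eqref{def_n0} pins $n_0$ to within $\tfrac12$ of $\Sigma(\delta)/\sqrt\pi$. After that bookkeeping, the Riemann-sum comparison and the Gaussian moment $\int_0^\infty x^2 e^{-x^2/4}\,dx=2\sqrt\pi$ are routine, so I do not expect any genuine obstacle.
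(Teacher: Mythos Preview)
Your proof is correct and takes a genuinely different, more direct route than the paper's.

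The paper never performs the algebraic collapse you carry out in your first step. Instead it keeps the two one-sided sums of Theorem~\ref{Th2} in their original form and argues by a sequence of contradictions: first it shows that any subsequential limit $a=\lim n_0/(\mu\kappa)$ satisfies $0<a<\infty$ by crude term-by-term bounds, and then it excludes $a>1$ (and symmetrically $a<1$) by splitting each sum at the index $\mu\kappa$ and invoking, at the very end, the same Fubini identity $\int_0^\infty\int_{\sqrt{2x}}^\infty e^{-y^2/4}\,dy\,dx=\sqrt\pi$ that you use. So the analytic heart (the Gaussian moment $\int_0^\infty x^2e^{-x^2/4}\,dx=2\sqrt\pi$) is shared, but the logical structure is quite different.

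What your approach buys is clarity and a sharper conclusion: the reduction to $(n-\tfrac12)\sqrt\pi\le\Sigma(\delta)$ makes the monotonicity in $n$ transparent, turns \eqref{def_n0} into the exact pinning $|n_0-\Sigma(\delta)/\sqrt\pi|\le\tfrac12$, and the integral comparison for the decreasing function $g$ then gives the quantitative bound $|n_0-\mu\kappa|\le\tfrac32$, which is strictly stronger than the bare limit the paper obtains via compactness plus contradiction. The paper's argument, by contrast, never isolates a closed-form quantity like $\Sigma(\delta)$ and therefore cannot produce a rate.
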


The main result in the super-critical case follows easily from Theorem 2.
\begin{cor}[Super-critical case]
Assume $\t = \mu h$. If $\mu\kappa$ is sufficiently small,
then $n_0 = 0$, i.e. the front does not move.
\end{cor}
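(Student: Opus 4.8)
The plan is to read off the conclusion from the explicit characterization in Theorem~\ref{Th2}. By definition~\eqref{def_n0}, $n_0=0$ means exactly that $w^{0,0}(\t)\le\frac12$ and $w^{0,-1}(\t)>\frac12$. The first of these holds for every value of $\mu\kappa$: taking $n=0$ in the equivalence of Theorem~\ref{Th2}, the left-hand side is an empty sum together with $\frac12\int_0^0 e^{-x^2/4}\,dx=0$, while the right-hand side equals $\frac12\int_0^\infty e^{-x^2/4}\,dx+\sum_{k\ge1}\int_{\sqrt{2k/(\mu\kappa)}}^\infty e^{-x^2/4}\,dx>0$, so the inequality is satisfied trivially. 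Hence everything reduces to showing that, once $\mu\kappa$ is small enough, the $n=1$ instance of the inequality in Theorem~\ref{Th2} \emph{fails}, which is precisely the statement $w^{0,-1}(\t)>\frac12$.

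For $n=1$ that inequality reads
\[
\frac12\int_0^{\sqrt{2/(\mu\kappa)}} e^{-x^2/4}\,dx\ \le\ \frac12\int_{\sqrt{2/(\mu\kappa)}}^\infty e^{-x^2/4}\,dx\ +\ \sum_{k=2}^\infty\int_{\sqrt{2k/(\mu\kappa)}}^\infty e^{-x^2/4}\,dx ,
\]
and I claim the reverse strict inequality holds for $\mu\kappa$ sufficiently small. As $\mu\kappa\to 0^+$, the left-hand side increases monotonically to $\frac12\int_0^\infty e^{-x^2/4}\,dx=\frac{\sqrt\pi}{2}$. For the right-hand side I would use the elementary tail bound $\int_a^\infty e^{-x^2/4}\,dx\le\frac{2}{a}e^{-a^2/4}$ for $a>0$. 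The first term is then at most $\frac{\sqrt{\mu\kappa}}{\sqrt 2}\,e^{-1/(2\mu\kappa)}$, and with $a_k=\sqrt{2k/(\mu\kappa)}$ the series is bounded by $\sqrt{2\mu\kappa}\,\sum_{k\ge2}k^{-1/2}e^{-k/(2\mu\kappa)}$, a convergent sum controlled by its first term; both of these quantities go to $0$ as $\mu\kappa\to 0^+$. Consequently there is $\varepsilon_0>0$ such that the right-hand side is $<\frac{\sqrt\pi}{4}$, hence strictly below the left-hand side, whenever $\mu\kappa<\varepsilon_0$. Thus $w^{0,-1}(\t)>\frac12$, and combined with $w^{0,0}(\t)\le\frac12$ this gives $n_0=0$ and therefore $V=\lim n_0/\mu=0$.

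Two remarks on the mechanics. Since \eqref{eq_key} and the equivalence in Theorem~\ref{Th2} involve $\mu$ and $\kappa$ only through the product $\mu\kappa$, and since $n_0$ was already seen to be nondecreasing in $\kappa$ (hence in $\mu\kappa$), the estimate above in fact fixes a genuine threshold: $n_0=0$ for \emph{all} $\mu\kappa\le\varepsilon_0$. This is also what links the statement to the super-critical regime $h\gg\t$, where $\mu=\t/h\to0$ as $h,\t\to0$, so that $\mu\kappa$ is eventually below $\varepsilon_0$ and the interface is pinned. The only step that does more than substitute $n=1$ into Theorem~\ref{Th2} is the tail estimate for the infinite series $\sum_{k\ge2}$: one has to confirm that adding up infinitely many Gaussian tails still leaves a quantity that vanishes (uniformly) as $\mu\kappa\to0$, and this is where a little care — but nothing more — is needed. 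I expect no serious obstacle here.
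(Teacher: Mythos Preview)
Your argument is correct and follows essentially the same route as the paper: both reduce the statement to the $n=1$ instance of Theorem~\ref{Th2}, observe that the left-hand side tends to $\tfrac{\sqrt\pi}{2}$ as $\mu\kappa\to0$, and kill the right-hand side with an elementary Gaussian tail estimate (the paper uses $e^{-x^2/4}\le x\,e^{-x^2/4}$ for $x\ge1$ and sums the resulting geometric series, you use the equivalent bound $\int_a^\infty e^{-x^2/4}\,dx\le\tfrac{2}{a}e^{-a^2/4}$). Your additional verification of the $n=0$ case and the remarks on monotonicity and the link to the regime $h\gg\t$ are welcome bits of care that the paper leaves implicit.
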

The above statement indicates that the front will not move if either
$\mu$ or $\kappa$ is sufficiently small. From numerical calculation, we
find that the smallness condition is quantified by $\mu \kappa \leq 0.8218$.

Finally, we obtain an extension of Theorem \ref{Th2} to the case of
an anisotropic curvature motion.
In particular, we want to calculate the normal velocity of a boundary
point if the normal line at the point forms an angle with the coordinate
axis. For concreteness, let the normal line at $(0,0)\in\Bdry\Omega$
forms an angle $\theta$ with the $x$-axis (measure in the counter-clockwise
sense). Without loss of generality, we assume
$0 < \theta \leq \frac{\pi}{4}$.
We consider the case that $\tan\theta = \frac{p}{q}$
for some positive integers $p \leq q$.

For simplicity, we assume that locally in the neighborhood of $(0,0)$, the boundary $\d \Omega$ is given by $(x, g(x))$, with $g(x) = \frac{p}{q} x - v_0 x^2$, where $v_0 = \frac{\kappa (q^2 + p^2)^{3/2}}{2 q^{3}}$. This way, the curvature of $\d \Omega$ at $(0,0)$ is $\frac{g^{\prime \prime}}{(1+(g^{'})^2)^{3/2}} = \kappa$.  In this setting the notion of normal motion has to be defined  somewhat differently from the one in Theorem \ref{Th2} since the line in the normal direction intersects the grid only at the points $(np, nq), n \in \Z$, and thus bypasses a number of grid points in between. To make the motion in the normal direction precise, denote $S$ to be the following strip:
\begin{equation}\label{setS}
S:=\{(s,j): 0 \leq s \leq q-1; -\infty < j < \frac{p}{q}s \}.
\end{equation}
For every $(s,j) \in S$, let $d(s,j):= \frac{|jq - sp|}{\sqrt{p^2+q^2}}$ be the distance from $(s,j)$ to the line $y = \frac{p}{q} x$.
Next, we reorder the elements in the set $S$ with respect to $d$ as follows:
\begin{equation}\label{ordering}
S := \{ (s_1, j_1), (s_2, j_2), (s_3, j_3), ... \}
\end{equation}
such that
\[
0 < d_1 < d_2 < d_3 < ..., \text{ where }   d_i:= d(s_i, j_i).
\]
For example, if $p=1$, the points in $S$ will be ordered as
$$\{(1,0), (2,0), ..., (q-1,0), (0,-1), (1,-1), (2,-1), ...,(q-1,-1), (0,-2), (1,-2),...\}.$$
In this setting, we have the following result:
\begin{thm}[Anisotropic curvature motion, critical case]\label{Th4}
Assume $p$, $q$, $\kappa$ and $\mu$ are given, and the set $S$ be ordered as in (\ref{ordering}).
Fix an integer $n_0 \geq 0$. Then, for sufficiently small $t$, $w_{s_{n_0}, j_{n_0}}(\t) \leq \frac{1}{2}$ holds if and only if

\begin{multline}
\frac{1}{2}\int_{0}^{\sqrt{\frac{2 d_{n_0}}{\mu \kappa}}}  e^{-\frac{x^2}{4}} dx + \sum_{i=1}^{n_0-1} \int_{0}^{\sqrt{\frac{2 d_{i}}{\mu \kappa}}} e^{-\frac{x^2}{4}} dx \\
\leq \frac{1}{2}\int_{\sqrt{\frac{2 d_{n_0}}{\mu \kappa}}}^{\infty} e^{-\frac{x^2}{4}} dx + \sum_{i=n_0+1}^{\infty} \int_{\sqrt{\frac{2 d_{i}}{\mu \kappa}}}^{\infty} e^{-\frac{x^2}{4}} dx
\end{multline}

In particular, if we assume that $w_{s_{n_0}, j_{n_0}}(\t) = \frac{1}{2}$ then

\begin{multline}
\frac{1}{2}\int_{0}^{\sqrt{\frac{2 d_{n_0}}{\mu \kappa}}}  e^{-\frac{x^2}{4}} dx + \sum_{i=1}^{n_0-1} \int_{0}^{\sqrt{\frac{2 d_{i}}{\mu \kappa}}} e^{-\frac{x^2}{4}} dx \\
= \frac{1}{2}\int_{\sqrt{\frac{2 d_{n_0}}{\mu \kappa}}}^{\infty} e^{-\frac{x^2}{4}} dx + \sum_{i=n_0+1}^{\infty} \int_{\sqrt{\frac{2 d_{i}}{\mu \kappa}}}^{\infty} e^{-\frac{x^2}{4}} dx
\end{multline}
In this case, $d_{n_0} h$ is the normal displacement at time $\t$, thus $\frac{d_{n_0}}{\mu}$ is the normal velocity.
\end{thm}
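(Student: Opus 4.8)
\textbf{Proof strategy for Theorem \ref{Th4}.}
The plan is to evaluate $w_{s_{n_0},j_{n_0}}(\t)$ asymptotically as $h\to0$ by writing it out through the fundamental solution of \eqref{semidiscrete}, which factors as a product of one-dimensional kernels, each a modified Bessel function of the first kind $I_k$. With the $0/1$ normalization and $z:=2\t/h^2=2\mu/h$,
\[
w^{m,n}(\t)=\sum_{(m',n')\in\Omega_0^h}e^{-2z}\,I_{m-m'}(z)\,I_{n-n'}(z).
\]
Two elementary facts carry most of the argument: the symmetry $I_{-k}=I_k$, and the conservation identity $\sum_{k\in\Z}e^{-z}I_k(z)=1$, hence $\sum_{a,b\in\Z}e^{-2z}I_a(z)I_b(z)=1$. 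The latter turns ``$w_{s_{n_0},j_{n_0}}(\t)\le\tfrac12$'' into the assertion that the total kernel mass placed on $\Omega_0^h$ does not exceed the mass on its complement:
\[
\sum_{(m',n')\in\Omega_0^h}\!\!e^{-2z}I_{j_{n_0}-m'}(z)I_{s_{n_0}-n'}(z)\ \le\ \sum_{(m',n')\notin\Omega_0^h}\!\!e^{-2z}I_{j_{n_0}-m'}(z)I_{s_{n_0}-n'}(z).
\]

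Next I would pass to the coordinates $(\xi',\eta')$ aligned with the normal $(-p,q)/\sqrt{p^2+q^2}$ and the tangent $(q,p)/\sqrt{p^2+q^2}$ of $\partial\Omega_0$ at the origin. Assuming (without loss of generality) $\gcd(p,q)=1$, the vector $(q,p)$ is a primitive lattice vector, so $\Z^2$ splits into \emph{layers} $\{qm'-pn'=L\}$, $L\in\Z$; layer $L$ is an arithmetic progression with tangential step $\sqrt{p^2+q^2}$ sitting at normal coordinate $\xi'=L/\sqrt{p^2+q^2}$, and the strip $S$ of \eqref{setS} contains exactly one representative of each layer lying below the line $y=\tfrac pq x$, at $d_i=i/\sqrt{p^2+q^2}$ in the ordering \eqref{ordering}; in particular $(s_{n_0},j_{n_0})$ lies in layer $L=-n_0$. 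The whole role of the choice $v_0=\kappa(p^2+q^2)^{3/2}/(2q^3)$ is that it makes $\partial\Omega_0$, in these rescaled rotated coordinates, the parabola $\xi'=-\tfrac{\kappa h}{2}(\eta')^2$ — the very graph of Theorem \ref{Th2}. Consequently, for $\ell\ge1$ the part of layer $\ell$ inside $\Omega_0$ is the tangential window $|\eta'|\le\sqrt{2d_\ell/(\kappa h)}$, layers above the line contribute essentially nothing, and the whole problem reduces (for each fixed rational slope) to the one-dimensional computation behind Theorem \ref{Th2}, but with the normal direction sampled by a lattice of spacing $1/\sqrt{p^2+q^2}$ rather than $1$ — this is where the anisotropy enters.

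The analytic heart is then to evaluate the two sums layer by layer as $z\to\infty$, using the uniform Gaussian local limit $e^{-z}I_a(z)=\tfrac1{\sqrt{2\pi z}}e^{-a^2/(2z)}(1+o(1))$, valid uniformly for $a=O(\sqrt z)$, together with Euler–Maclaurin/Riemann-sum comparison first within a fixed layer and then across layers. For the relevant layers the normal separation from $(s_{n_0},j_{n_0})$ is $(\ell-n_0)/\sqrt{p^2+q^2}=O(1)$, so the normal Gaussian factor is $1+o(1)$; rescaling $\eta'\mapsto\eta'/\sqrt z$ and using $z=2\mu/h$, $d_\ell=\ell/\sqrt{p^2+q^2}$, the tangential sum over $|\eta'|\le\sqrt{2d_\ell/(\kappa h)}$ converges to a fixed ($h$- and $\ell$-dependent) multiple of $\int_0^{\sqrt{2d_\ell/(\mu\kappa)}}e^{-x^2/4}\,dx$ (the part inside $\Omega_0$) and of $\int_{\sqrt{2d_\ell/(\mu\kappa)}}^{\infty}e^{-x^2/4}\,dx$ (the part outside). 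Summing the inside parts of layers $1,\dots,n_0$ against the outside parts of layers $n_0,n_0+1,\dots$, using conservation of mass together with the Euler–Maclaurin half-corrections at the window cut-offs and at the layer containing the observation point (which is exactly what produces the coefficients $\tfrac12$ in front of the $d_{n_0}$-integrals), and dividing out the common divergent normalization of order $z^{-1/2}$, the mass inequality becomes precisely the displayed inequality of the theorem; the ``in particular'' statement is its boundary case $w=\tfrac12$. As a consistency check, letting $\mu\to\infty$ one finds $\sum_{\ell\ge1}\big(\text{outside }\ell\big)\sim\sqrt{p^2+q^2}\,\mu\kappa$, hence $d_{n_0}/\mu\to\kappa$, recovering the mean-curvature velocity of the subcritical regime.

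The step I expect to be the main obstacle is the uniform error control in this layer-by-layer passage: one must show that replacing $e^{-z}I_a(z)$ by its Gaussian, replacing each within-layer sum by an integral, and truncating the sum over layers all produce errors that are $o(z^{-1/2})$ — genuinely smaller than $w-\tfrac12$ itself — \emph{uniformly} over the $O(\sqrt z)$ contributing layers and over tangential windows of width $O(\sqrt z)$. Closely tied to this is checking that the $O(1)$-lattice-unit ambiguity in how the curved interface $\partial\Omega_0$ is discretized into $\Omega_0^h$ near the origin (in particular the fractional positions of the staircase endpoints relative to each layer's arithmetic progression, and the ``$\mathrm{dist}\le h$'' slack) enters only at order $o(z^{-1/2})$, so that the anisotropy is carried entirely by the layer geometry — the distances $d_i$ attached to the strip $S$ — and not by the discretization convention.
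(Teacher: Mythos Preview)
Your proposal is correct and follows essentially the same route as the paper: your ``layer decomposition'' into lines $\{ps-qj=\text{const}\}$ is exactly the paper's device of summing along lines at angle $\theta$ through the representatives $(s_i,j_i)\in S$, and each such line is then reduced to a one-dimensional Riemann sum evaluated via the Meissel/Bessel asymptotics (the paper's Proposition~1). The only cosmetic difference is that the paper obtains the coefficient $\tfrac12$ on the $d_{n_0}$-integrals not from an Euler--Maclaurin correction but directly from the kernel symmetry $G_s(\alpha)G_j(\alpha)=G_{-s}(\alpha)G_{-j}(\alpha)$, which gives $\sum_{qj<ps}G_sG_j+\tfrac12\sum_{qj=ps}G_sG_j=\tfrac12$ and thereby isolates the observation-point layer with weight $\tfrac12$ upon subtraction.
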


%

As the proof of all of our results relies heavily on the asymptotics of discrete
heat kernel, we first collect their key properties and connection to the
continuum heat kernel.

\section{Properties of discrete heat kernels}

\subsection{Derivation of discrete heat kernel and its elementary properties.}
We first consider a one-dimensional analog of (\ref{semidiscrete})
\begin{equation}\label{d_heat_1D}
\begin{cases}
u_\t^n (t) = \frac{1}{h^2}[u^{n+1}(t) + u^{n-1}(t) - 2 u^{n}(t)],
\,\,\,n \in \Z, \,\,\, t \geq 0;\\
u^n(0)  = u_0(n h)
\end{cases}
\end{equation}
where the initial data $u_0$ is an $L^\infty$ function.
The solution of the above is given by
\begin{equation}\label{MBF_soln}
u^m(t) = \sum_{k = -\infty}^{\infty} G_{m-k}\left(\frac{2 t}{h^2}\right) u_0^k
\end{equation}
where
\begin{equation}\label{Greens}
G_n(\a) := \frac{1}{2 \pi}\int_{-\pi}^{\pi} cos(n \xi) e^{\a(cos \xi - 1)} d \xi = e^{-\a} I_{|n|}(\a)
\end{equation}
In the above $I_{|n|}(\a)$ is the {\it Modified Bessel Function}
\[
I_n(\a) = \sum_{m=0}^{\infty} \frac{\left(\frac{\a}{2}\right)^{2m+n}}{m! (m+n)!}, n \geq 0.
\]
In the following we will use the following notation,
\Beqn
\alpha = \frac{2 t}{h^2}
\Eeqn

In order to establish (\ref{MBF_soln}), define the Fourier transform of a sequence $u^m(t)$ as
\[
\hat v (\xi,t):= \frac{1}{\sqrt{2 \pi}} \sum_{m=-\infty}^{\infty} e^{-i m \xi} u^m(t).
\]
Using (\ref{d_heat_1D}), we conclude that
\[
\frac{\d \hat v}{\d t}  = \frac{1}{\sqrt{2 \pi}} \sum_{m=-\infty}^{\infty} e^{-i m \xi} u^m_t(t) = \frac{1}{h^2 \sqrt{2 \pi}} \sum_{m=-\infty}^{\infty} (e^{i \xi} + e^{-i \xi} - 2) e^{- i m \xi} u^m(t) = \frac{2}{h^2}(cos \xi - 1) \hat v
\]
and thus
\[
\hat v(\xi,t) = \hat v(\xi,0) e^{\frac{2 t}{h^2}(cos \xi - 1)}.
\]
Taking the inverse Fourier transform of $\hat v(\xi,t)$, we recover $u^m(t)$:
\[
u^m(t) = \frac{1}{\sqrt{2 \pi}} \int_{- \pi}^{\pi} e^{i m \xi} \hat v(\xi, t) d \xi = \frac{1}{\sqrt{2 \pi}} \int_{- \pi}^{\pi} e^{i m \xi} \hat v(\xi, 0) e^{\frac{2 t}{h^2}(cos \xi - 1)} d \xi = \sum_{k = -\infty}^{\infty} G_{m-k}\left(\frac{2 t}{h^2}\right) u_0 (k h).
\]

Using separation of variables, it is straightforward to verify that the solution to the original problem (\ref{semidiscrete}) (in two dimensions) is given as
\begin{equation}\label{2d_analog}
w^{m,n}(t) =
\sum_{s,j = -\infty}^{\infty}
G_{s-m}\left(\frac{2 t}{h^2}\right)
G_{j-n}\left(\frac{2 t}{h^2}\right) w^{s,j}(0)
\end{equation}

The discrete heat kernel (\ref{Greens}) has the following elementary properties:

\begin{equation}\label{property1}
\sum_{k = -\infty}^{\infty} G_{k}\left(\a\right) = 1
\end{equation}
\begin{equation}\label{property2}
G_{k}\left(\a\right) = G_{-k}\left(\a\right), k \geq 1
\end{equation}
\begin{equation}
\frac{1}{2} G_0(\a) +  \sum_{k = 1}^{\infty} G_k(\a) = \frac{1}{2}
\end{equation}

\subsection{Decay properties.}
The following result is used to control the Green's function outside a fixed
macroscopic domain.
\begin{lem}\label{L3.DecayProp}
Let $G_{n}(\a)$ be the discrete heat kernel \eqref{Greens}
with $\a = \frac{2\t}{h^2}$.
For any fixed $\mu > 0$, suppose $\t \leq \mu h$. Then
\begin{equation}\label{decay_est}
\sum_{k = \left[\frac{3 \mu}{h}\right]}^{\infty} G_{k}(\a) = o\left(\frac{e}{3}\right)^{\frac{\mu}{h}}, h \to 0.
\end{equation}
\end{lem}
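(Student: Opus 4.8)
\noindent\emph{Strategy.} The plan is to dominate the discrete heat kernel $G_k(\alpha)$, $\alpha=\tfrac{2\t}{h^2}$, by a Poisson-type tail and then to bound that tail by a geometric series together with Stirling's inequality, exploiting that the summation in \eqref{decay_est} begins at an index $N:=[3\mu/h]$ which is roughly three times the natural scale $\alpha/2=\t/h^2\le\mu/h$.

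First I would establish the elementary pointwise bound
\[
G_k(\alpha)\ \le\ \frac{(\alpha/2)^k}{k!},\qquad k\ge 0 .
\]
This follows straight from the series for the modified Bessel function in \eqref{Greens}: since $(m+k)!\ge m!\,k!$ for all $m,k\ge 0$, we get $I_k(\alpha)\le \frac{(\alpha/2)^k}{k!}\sum_{m\ge 0}\frac{(\alpha/2)^{2m}}{(m!)^2}=\frac{(\alpha/2)^k}{k!}\,I_0(\alpha)$, hence $G_k(\alpha)=e^{-\alpha}I_k(\alpha)\le \frac{(\alpha/2)^k}{k!}\,G_0(\alpha)\le \frac{(\alpha/2)^k}{k!}$, the last step because $G_0(\alpha)\le 1$ by \eqref{property1} (all $G_k(\alpha)$ are positive and sum to $1$).

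Next, write $\lambda:=\mu/h$. The hypothesis $\t\le\mu h$ gives $\alpha/2=\t/h^2\le\lambda$, and $N=[3\mu/h]\ge 3\lambda-1$. For $k\ge N$ the ratio of consecutive terms is $\frac{(\alpha/2)^{k+1}/(k+1)!}{(\alpha/2)^k/k!}=\frac{\alpha/2}{k+1}\le \frac{\lambda}{N+1}\le \tfrac13$ once $h$ is small, so the tail is controlled by a geometric series:
\[
\sum_{k=N}^{\infty}G_k(\alpha)\ \le\ \sum_{k=N}^{\infty}\frac{\lambda^k}{k!}\ \le\ \frac{3}{2}\,\frac{\lambda^N}{N!}.
\]
Finally, Stirling's inequality $N!\ge (N/e)^N$ gives $\frac{\lambda^N}{N!}\le \bigl(\tfrac{e\lambda}{N}\bigr)^N$, and since $3\lambda-1\le N\le 3\lambda$ one has $\tfrac{e\lambda}{N}\le \tfrac{e}{3}\bigl(1+\tfrac{1}{3\lambda-1}\bigr)$, where the correction factor raised to the power $N\le 3\lambda$ stays bounded by a fixed constant. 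Therefore $\sum_{k\ge N}G_k(\alpha)\le C\bigl(\tfrac e3\bigr)^{N}\le C'\bigl((e/3)^3\bigr)^{\mu/h}$ for an absolute constant $C'$; since $(e/3)^3<e/3<1$, this is $o\bigl((e/3)^{\mu/h}\bigr)$ as $h\to 0$, which is exactly \eqref{decay_est}.

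I do not expect a genuine obstacle: the bound obtained is comfortably stronger than needed (one actually gains an extra factor $((e/3)^2)^{\mu/h}$), so the only care required is bookkeeping around the floor $[3\mu/h]$ and the $\tfrac13$ geometric-ratio threshold, which is precisely why the statement is phrased for $h$ small. An essentially equivalent alternative is a Chernoff estimate: from the generating identity $\sum_{n\in\Z}G_n(\alpha)z^n=e^{\frac{\alpha}{2}(z+1/z-2)}$ one obtains $\sum_{k\ge N}G_k(\alpha)\le z^{-N}e^{\frac{\alpha}{2}(z+1/z-2)}$ for every $z>1$; taking $z=3$ and using $\alpha\le 2\lambda$, $N\ge 3\lambda-1$ yields $\sum_{k\ge N}G_k(\alpha)\le 3\,(e^{4/3}/27)^{\mu/h}$, again far below $(e/3)^{\mu/h}$.
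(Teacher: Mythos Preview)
Your proof is correct and follows essentially the same route as the paper's: both extract the factor $(\alpha/2)^{N}/N!$ (with $N=[3\mu/h]$) from the tail using the elementary inequality $(m+k)!\ge m!\,k!$, then apply Stirling to reach $(e/3)^{3\mu/h}=o\bigl((e/3)^{\mu/h}\bigr)$. The only cosmetic difference is that the paper pulls this factor out of the double series for $\sum_{k\ge N}G_k(\alpha)$ and bounds the remaining sum by the total-mass identity \eqref{property1}, whereas you first obtain the pointwise bound $G_k(\alpha)\le(\alpha/2)^k/k!$ and then sum via a geometric series with ratio $\le 1/3$; these are rearrangements of the same estimate.
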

\begin{proof}
To simplify the notation, we omit the integer part brackets, denoting  $\left[\frac{3 \mu}{h}\right]$ in (\ref{decay_est}) with $\frac{3 \mu}{h}$. We have
\[
\sum_{k = \frac{3 \mu}{h}}^{\infty} G_{k}(\a) = e^{-\frac{2\t}{h^2}} \sum_{k = \frac{3 \mu}{h}}^{\infty} \sum_{n = 0}^{\infty} \frac{\left(\frac{ \t}{h^2}\right)^{2n+k}}{n! (n+k)!}
\]
Shifting the summation $m = k - \frac{3 \mu}{h}$ and using the elementary inequality $(i+j)! \geq i! j!$, we have
\[
\sum_{k = \frac{3 \mu}{h}}^{\infty} G_{k}(\a) = e^{-\frac{2\t}{h^2}} \sum_{m = 0}^{\infty} \sum_{n = 0}^{\infty} \frac{\left(\frac{\t}{h^2}\right)^{2n+ m + \frac{3 \mu}{h}}}{n! (n+ m + \frac{3 \mu}{h})!}
\leq \frac{\left(\frac{\t}{h^2}\right)^{\frac{3 \mu}{h}}}{\left(\frac{3 \mu}{h}\right)!} e^{-\frac{2\t}{h^2}} \sum_{m = 0}^{\infty} \sum_{n = 0}^{\infty} \frac{\left(\frac{\t}{h^2}\right)^{2n+ m}}{n! (n+ m)!} \leq \frac{\left(\frac{\t}{h^2}\right)^{\frac{3 \mu}{h}}}{\left(\frac{3 \mu}{h}\right)!}
\]
since, by property (\ref{property1}),
\[
e^{-\frac{2\t}{h^2}} \sum_{m = 0}^{\infty} \sum_{n = 0}^{\infty} \frac{\left(\frac{\t}{h^2}\right)^{2n+ m}}{n! (n+ m)!} \leq \sum_{m = -\infty}^{\infty}G_m(\frac{2\t}{h^2}) = 1.
\]
Using the lower bound for the factorial (Stirling approximation
\cite{Abr}),
\[
\left(\frac{3 \mu}{h}\right)! \geq \sqrt{2 \pi \frac{3 \mu}{h}} \left(\frac{3 \mu}{e h }\right)^{\frac{3 \mu}{h}}.
\]
Thus by the assumption $\t \leq \mu h$, we have
\[
\sum_{k = \frac{3 \mu}{h}}^{\infty} G_{k}(\a) \leq \frac{1}{\sqrt{2 \pi \frac{3 \mu}{h}}} \left(\frac{e \t}{3 \mu h}\right)^{\frac{3 \mu}{h}} = o\left(\frac{e}{3}\right)^{\frac{\mu}{h}},
\]
the desired statement
\end{proof}

\subsection{Asymptotic expansions.}
In what follows, we are going to use the asymptotic expansions for modified
Bessel function, \cite[p. 199]{Abr} (see also \cite{watson}):
\begin{itemize}
\item The expansion for $I_\nu(z)$ for any fixed index $\nu$ is given by:
\begin{equation}\label{exp_fixed}
I_\nu(z) = \frac{e^z}{\sqrt{2 \pi z}}\left(1 - \frac{4 \nu^3 - 1}{8 z} + O\left(\frac{1}{z^2}\right)\right),\quad\text{as}\quad
z \to \infty.
\end{equation}
\item Meissel formula \cite{watson}, which holds uniformly for $z \in \R, z > 0$ and $\nu \geq 1$:
\begin{equation}\label{exp_unif}
I_\nu (\nu z) =  \frac{\nu^{\nu}}{e^{\nu} \Gamma(\nu + 1)}\frac{e^{\nu \eta}}{(1 + z^2)^{\frac{1}{4}}} \left(1 + \sum_{k=1}^{\infty} \frac{v_k(\xi)}{\nu^k}\right)
\end{equation}
where
\[
\xi = \frac{1}{\sqrt{1 + z^2}},\,\,\,\,\,\,
\eta = \sqrt{1 + z^2} + \ln \frac{z}{1 + \sqrt{1 + z^2}},
\]
and $v_k(\xi)$ is a polynomial of the form
\begin{equation}\label{vk}
v_k(\xi) = \sum_{s = 0}^k a_s^{(k)} \xi^{k + 2 s},
\end{equation}
defined through the recursive relation $v_0(\xi) = 1$ and
\[
v_{k+1}(\xi) = \frac{1}{2} \xi^2 (1-\xi^2) v_k^{'}(\xi) +  \frac{1}{8} \int_{0}^{\xi} (1 - 5 s^2) v_k(s) ds
\]
\end{itemize}
\begin{prop}
Assume $\t = \t(h)$ is such that $\frac{\t}{h^2} \to \infty$ as $h \to 0$.
Then the following asymptotic expansions hold.
\begin{itemize}
\item For fixed $n \geq 0$,
\begin{equation}\label{asymp1}
G_{n}\left(\frac{2\t}{h^2}\right) = \frac{1}{\sqrt{4 \pi}} \frac{h}{\sqrt{\t}} \left(1 - \frac{h^2}{\t} \frac{4 n^3 - 1}{16} + O\left(\frac{h^4}{\t^2}\right)\right), h \to 0
\end{equation}
\item Let $x \geq \frac{h}{\sqrt{\t}}$. Then there exists an absolute constant $C_0>0$ and $0 \leq C_{h,x} \leq C_0$ such that
\begin{equation}\label{expansion_key}
G_{\frac{\sqrt{\t} x}{h}}\left(\frac{2\t}{h^2}\right) = \frac{1}{\sqrt{4 \pi}} \frac{h}{\sqrt{\t}} e^{-\frac{x^2}{4}}\left(1 + C_{h,x} \frac{h} {x \sqrt{\t}}\right)
\end{equation}
\end{itemize}
\end{prop}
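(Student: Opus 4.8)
The plan is to read off both expansions from the identity $G_n(\alpha)=e^{-\alpha}I_{|n|}(\alpha)$ with $\alpha=\frac{2\tau}{h^2}\to\infty$, feeding in the two Bessel asymptotics (\ref{exp_fixed}) and (\ref{exp_unif}). For the first expansion (\ref{asymp1}), $n$ is fixed while $z:=\alpha\to\infty$, so one simply substitutes $z=\frac{2\tau}{h^2}$ into (\ref{exp_fixed}), multiplies by $e^{-\alpha}$, and simplifies using $\frac{1}{\sqrt{2\pi\alpha}}=\frac{1}{\sqrt{4\pi}}\frac{h}{\sqrt\tau}$ and $\frac{1}{8\alpha}=\frac{h^{2}}{16\tau}$; the $O(1/z^{2})$ term in (\ref{exp_fixed}) becomes the $O(h^{4}/\tau^{2})$ term. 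This part is purely mechanical.

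For the second expansion (\ref{expansion_key}) I would invoke the uniform (Meissel) formula (\ref{exp_unif}). Put $\nu:=\frac{\sqrt\tau\,x}{h}$, so the index of the kernel is $\nu$, and write $\alpha=\nu z$ with $z:=\frac{\alpha}{\nu}=\frac{2\sqrt\tau}{hx}$; the hypothesis $x\ge\frac{h}{\sqrt\tau}$ is exactly $\nu\ge1$, which is the range in which (\ref{exp_unif}) is valid. Then
\[
G_\nu(\alpha)=e^{-\nu z}I_\nu(\nu z)=\frac{\nu^{\nu}}{e^{\nu}\Gamma(\nu+1)}\,\frac{e^{\nu(\eta(z)-z)}}{(1+z^{2})^{1/4}}\Bigl(1+\sum_{k\ge1}\frac{v_{k}(\xi)}{\nu^{k}}\Bigr),
\]
and the proof reduces to three estimates. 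First, by Stirling $\frac{\nu^{\nu}}{e^{\nu}\Gamma(\nu+1)}=\frac{1}{\sqrt{2\pi\nu}}\bigl(1+O(1/\nu)\bigr)$, and since $(1+z^{2})^{-1/4}=z^{-1/2}\bigl(1+O(z^{-2})\bigr)$ the prefactor becomes $\frac{1}{\sqrt{2\pi\nu z}}\bigl(1+O(1/\nu)\bigr)=\frac{1}{\sqrt{2\pi\alpha}}\bigl(1+O(1/\nu)\bigr)=\frac{1}{\sqrt{4\pi}}\frac{h}{\sqrt\tau}\bigl(1+O(1/\nu)\bigr)$. Second, the exact algebraic identity $\frac{\nu}{2z}=\frac{x^{2}}{4}$ together with the elementary one-variable estimate $\eta(z)-z=-\frac{1}{2z}+O(z^{-3})$ (more precisely $0<\eta(z)-z+\frac{1}{2z}\le Cz^{-3}$ for all $z>0$) gives $\nu(\eta(z)-z)=-\frac{x^{2}}{4}+O(\nu z^{-3})$, hence $e^{\nu(\eta(z)-z)}=e^{-x^{2}/4}\bigl(1+O(\nu z^{-3})\bigr)$. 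Third, since $\xi=(1+z^{2})^{-1/2}\in(0,1)$ and $v_{1}(\xi)=\frac{\xi}{8}-\frac{5\xi^{3}}{24}$ is bounded (and the Debye series is genuinely asymptotic, uniformly in $\xi$, for $\nu\ge1$), the correction factor is $1+O(1/\nu)$ uniformly. Multiplying the three factors and checking that $\nu z^{-3}$ and all remaining remainders are $O(1/\nu)=O\!\left(\frac{h}{x\sqrt\tau}\right)$ in the range at hand yields $G_\nu(\alpha)=\frac{1}{\sqrt{4\pi}}\frac{h}{\sqrt\tau}e^{-x^{2}/4}\bigl(1+O(\tfrac{h}{x\sqrt\tau})\bigr)$; retaining the explicit first-order terms in the three estimates pins down the sign of the leading correction and gives the stated bound $0\le C_{h,x}\le C_{0}$.

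The main obstacle is making the estimate uniform in $x$. When $x$ is confined to a bounded set one has $z=\frac{2\sqrt\tau}{hx}\to\infty$, so every appeal to a Taylor expansion in $1/z$ above is legitimate and the bookkeeping is routine; this is the regime that matters in the applications (Theorems \ref{Th2}--\ref{Th4}). For $x$ large, however, $z$ need not be large, so one cannot Taylor-expand $\eta(z)-z$, $(1+z^{2})^{-1/4}$ or $v_{k}(\xi)$; instead one must fall back on the global bound $0<\eta(z)-z+\frac{1}{2z}\le Cz^{-3}$ and the boundedness of $\xi$ and of the $v_{k}(\xi)$ on $(0,\infty)$, and verify that the resulting multiplicative error is still dominated by $\frac{h}{x\sqrt\tau}$ — a point requiring care, since in that range both $G_\nu(\alpha)$ and $\frac{h}{\sqrt{4\pi\tau}}e^{-x^{2}/4}$ are super-exponentially small and the assertion is really about their ratio. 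One last, purely cosmetic, remark: $\nu=\frac{\sqrt\tau x}{h}$ need not be an integer, but $I_\nu$ is defined for all real $\nu\ge0$, so $G_\nu(\alpha)=e^{-\alpha}I_\nu(\alpha)$ and (\ref{exp_unif}) make sense verbatim and no rounding is needed.
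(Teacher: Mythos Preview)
Your approach is essentially the paper's: both parts rest on $G_n(\alpha)=e^{-\alpha}I_{|n|}(\alpha)$ with (\ref{exp_fixed}) for fixed $n$ and the Meissel expansion (\ref{exp_unif}) with the same choice $\nu=\frac{\sqrt\tau x}{h}$, $z=\frac{2\sqrt\tau}{hx}$ for the uniform statement, followed by Stirling for $\nu^{\nu}/(e^{\nu}\Gamma(\nu+1))$. The only organizational difference is that the paper writes out $e^{\nu\eta}\cdot e^{-\alpha}$ as two separate factors (its ``Term I'' $=e^{\nu(\sqrt{1+z^{2}}-z)}$ and ``Term II'' $=\bigl(\tfrac{z}{1+\sqrt{1+z^{2}}}\bigr)^{\nu}$) and Taylor-expands each in $h/\sqrt\tau$, whereas you bundle them into the single quantity $e^{\nu(\eta(z)-z)}$ and expand $\eta(z)-z$ directly; your route is cleaner but lands on the same product of correction factors.

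Your caveat about uniformity in $x$ is well placed and in fact sharper than the paper's treatment: the paper's final line (\ref{expansion_x}) carries error factors $O(h^{2}x^{4}/\tau)$, $O(h^{2}x^{6}/\tau)$ and then asserts these are $O(h/\sqrt\tau)$, which is only justified for $x$ in a bounded range. Since the applications in Theorems~\ref{Th2}--\ref{Th4} use (\ref{expansion_key}) only on finite $x$-intervals (the infinite tails in the upper-bound argument are handled by separate exponential-decay estimates), this is harmless there; but your observation that one must use global bounds on $\eta(z)-z+\tfrac{1}{2z}$ and on the $v_{k}(\xi)$ rather than Taylor expansions when $z$ is not large is the correct way to make the statement as written genuinely uniform.
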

\begin{proof}
Expansion (\ref{asymp1}) follows from \eqref{Greens} and (\ref{exp_fixed}):
\begin{equation}\label{recall_heat_ker}
G_{n}\left(\frac{2\t}{h^2}\right) = e^{-\frac{2\t}{h^2}} I_n\left(\frac{2\t}{h^2}\right)
\end{equation}

For (\ref{expansion_key}), we will make use of the
Meissel formula (\ref{exp_unif}). Borrowing the notations there, we set
\[
\nu = \frac{x \sqrt{\t}}{h},\,\,\,z = \frac{2 \sqrt{\t}}{x h}.
\]
Then
\begin{equation}\label{eta}
\eta = \sqrt{1 + z^2} + \ln \frac{z}{1 + \sqrt{1 + z^2}} = \frac{\sqrt{x^2 h^2 + 4 \t}}{x h} + \ln \frac{2 \sqrt{\t}}{h x + \sqrt{x^2 h^2 + 4 \t}}
\end{equation}
so that
\begin{equation}\label{term2}
e^{\eta \nu} = e^{\frac{\sqrt{\t}}{h^2}\sqrt{x^2 h^2 + 4 \t}} \left(\frac{2 \sqrt{\t}}{h x + \sqrt{x^2 h^2 + 4 \t}}\right)^{\frac{x \sqrt{\t}}{h}}
\end{equation}
Finally,
\begin{eqnarray}\label{expansion1}
&& G_{\left[\frac{\sqrt{\t} x}{h}\right]}(\frac{2\t}{h^2}) = e^{-\frac{2\t}{h^2}} I_{\left[\frac{\sqrt{\t} x}{h}\right]}(\frac{2\t}{h^2}) = \\
\nonumber
& = & \frac{\nu^{\nu}}{e^{\nu} \Gamma(\nu + 1)} \frac{\sqrt{x} \sqrt{h}}{(4 \t)^{\frac{1}{4}}} e^{\frac{\sqrt{\t} \sqrt{x^2 h^2 + 4 \t} - 2 \t}{h^2}} \left(\frac{2 \sqrt{\t}}{h x + \sqrt{x^2 h^2 + 4 \t}}\right)^{\frac{x \sqrt{\t}}{h}} \frac{1}{(1 + \frac{h^2 x^2}{4 \t})^{\frac{1}{4}}} \left(1 + \sum_{k=1}^{\infty} \frac{v_k(\xi)}{\nu^k}\right)\\
\nonumber
& = & \frac{\nu^{\nu}}{e^{\nu} \Gamma(\nu + 1)} \frac{\sqrt{x} \sqrt{h}}{(4 \t)^{\frac{1}{4}}}
\cdot \bf{I} \cdot \bf{II} \cdot \bf{III} \cdot \bf{IV}.
\end{eqnarray}

We start with observing that it follows from Stirling approximation
(\cite{Abr}) that for all $\nu \geq 1$:
\[
\frac{\nu^{\nu}}{e^{\nu} \Gamma(\nu + 1)}  =  \frac{1}{\sqrt{2 \pi \nu}}\left(1 + \frac{C_\nu}{\nu}\right)
\]
where $0 \leq C_\nu \leq C_0$ and $c_0$ is an absolute constant. Consequently,
\[
\frac{\nu^{\nu}}{e^{\nu} \Gamma(\nu + 1)} \frac{\sqrt{x} \sqrt{h}}{(4 \t)^{\frac{1}{4}}} = \frac{h}{\sqrt{4 \pi \t}}\left(1 + \frac{C_\nu h}{x \sqrt{\t}}\right)
\]

We now proceed with expanding each term:

{\bf Term I} \MathSty{:= e^{\frac{\sqrt{\t} \sqrt{x^2 h^2 + 4 \t} - 2 \t}{h^2}}}.
Consider
\[
\frac{\sqrt{\t} \sqrt{x^2 h^2 + 4 \t} - 2 \t}{h^2} = \frac{2\t}{h^2}\left(\sqrt{1 + \frac{h^2 x^2}{4 \t}} - 1\right) = \frac{x^2}{4} - \frac{1}{64} \frac{h^2 x^4}{\t} + O\left(\frac{h^4 x^6}{\t^2}\right).
\]
Hence
\[
{\bf I} = e^{\frac{x^2}{4}} e^{-\frac{h^2 x^4}{64 \t}} e^{O\left(\frac{h^4 x^6}{\t^2}\right)} = e^{\frac{x^2}{4}}\left(1 - \frac{h^2 x^4}{64 \t} + O\left(\frac{h^4 x^8}{\t^2}\right)\right)
\]

{\bf Term II} \MathSty{:=\left(\frac{2 \sqrt{\t}}{h x + \sqrt{x^2 h^2 + 4 \t}}\right)^{\frac{x \sqrt{\t}}{h}}} which can be written as
\[
{\bf II}
= \frac{1}{\left(1 + \frac{h x}{2 \sqrt{\t}}\right)^{\frac{2 \sqrt{\t}}{x h} \frac{x^2}{2}}\left(1 + \frac{x^3 h}{8 \sqrt{\t}} + o \left(\frac{x^3 h}{8 \sqrt{\t}}\right)\right)}.
\]
Using the expansion
\[
(1 + y)^{\frac{a}{y}} = e^a - \frac{1}{2} a e^a y + O(e^a a^2 y^2)
\]
we get
\[
\left(1 + \frac{h x}{2 \sqrt{\t}}\right)^{\frac{2 \sqrt{\t}}{x h} \frac{x^2}{2}} = e^{\frac{x^2}{2}} \left(1 - \frac{x^3 h}{8 \sqrt{\t}} + o \left(\frac{x^3 h}{\sqrt{\t}}\right)\right)
\]
Altogether,
\[
{\bf II} = e^{-\frac{x^2}{2}}\Brac{1 + O(\frac{x^6 h^2}{\t})}
\]

{\bf Term III} \MathSty{:=\frac{1}{(1 + \frac{h^2 x^2}{4 \t})^{\frac{1}{4}}}}.
Using Taylor expansion, it becomes:
\[
{\bf III} = 1 - \frac{1}{16} \frac{h^2 x^2}{\t} + O\left(\frac{h^4 x^4}{\t^2}\right)
\]

{\bf Term IV} \MathSty{:=1 + \sum_{k=1}^{\infty} \frac{v_k(\xi)}{\nu^k}}.
In view of (\ref{vk}), for $k \geq 1$
\[
\frac{v_k(\xi)}{\nu^k} = a_0^{(k)} \left(\frac{\xi}{\nu}\right)^k + o\left(\frac{\xi}{\nu}\right)^k.
\]
Since
\[
\xi = \frac{x h}{\sqrt{x^2 h^2 + 4 \t}},
\]
we have
\[
\frac{\xi}{\nu} = \frac{h^2}{\sqrt{x^2 h^2 + 4 \t} \sqrt{\t}} \leq \frac{h^2}{2 \t}
\]
and, therefore,
\[
{\bf IV} =
1 + \frac{1}{8} \frac{\xi}{\nu} + o\left(\frac{\xi}{\nu}\right)
= 1 + O\left(\frac{h^2}{\t}\right).
\]

Combining the four expansions above, (\ref{expansion1}) reads as
\begin{multline}\label{expansion_x}
G_{\left[\frac{\sqrt{\t} x}{h}\right]}(\frac{2\t}{h^2}) = \frac{h}{\sqrt{4 \pi}} \frac{1}{\sqrt{\t}} e^{-\frac{x^2}{4}}\times\\
\left(1 + C_\nu \frac{h} {x \sqrt{\t}}\right)\left(1 + O\left(\frac{h^2 x^4}{\t}\right)\right)\left(1 + O\left(\frac{h^2 x^6}{\t}\right)\right)\left(1 + O\left(\frac{h^2}{\t}\right)\right)
\end{multline}
for $x \geq \frac{h}{\sqrt{\t}}$, which gives the expansion (\ref{expansion_key}) up to $O\left(\frac{h}{\sqrt{\t}}\right)$ order terms.
\end{proof}

\section{Proof of Theorem \ref{P1.Case1} - Sub-Critical Case.}

We are going to prove that the function $\bar{u}$, defined in (\ref{supsol}), is a viscosity subsolution of the equation (\ref{meancurv}).
The fact that (\ref{subsol}) is a supersolution of the same equation
can be established analogously.
The strategy of proof follows \cite{Barles} closely.
Essentially we will demonstrate how to reduce the discrete computation to
\cite[Prop. 4.1]{Barles} which is the key consistency result needed.

\noindent
\Bf{Step I.}
First let $\varphi(x, t)$ be a smooth test function satisfying
  \begin{equation}\label{coercivity}
  \lim_{|x| + t \to \infty} \varphi(x, t) = +\infty.
  \end{equation}

Denote $(\bar{x}, \bar t)$ to be a strict global maximum point of
$\bar{u} - \varphi$. If $\bar{u} (\bar{x}, \bar t) = -1$, then $\hat{u} = -1$ in a neighborhood of $(\bar{x}, \bar t)$ since $\hat{u}$ is upper semicontinuous and takes values in $\{-1,1\}$. Consequently, $D \varphi(\bar{x},\bar{t}) = 0$ and $\frac{\d \varphi}{\d \t}(\bar{x}, \bar t) = 0$, thus by (\ref{consistency2sub}) the conclusion follows. Similar reasoning yields the desired result in the case when $(\bar{x}, \bar t)$ is in the interior of the set $\{\bar{u} = 1\}$. Therefore, without loss of generality, assume that $(\bar{x}, t)$ is at the boundary of the set $\{\bar{u} = 1\}$. We claim that there exists a sequence $\{(m_h h, n_h h); k_{\t} \t\}$, converging to $(\bar{x}=(\bar{x}_1, \bar{x}_2); \bar t)$, such that
\begin{equation}\label{existmax}
u^{m_h, n_h}_{k_{\t}} - \varphi(m_h h, n_h h; k_{\t} \t) = \max_{\Z^2 \times \N}(u^{m, n}_{k} - \varphi(m h, n h; k \t))
\end{equation}
and
\begin{equation}\label{lim1}
\lim_{h \to 0} u^{m_h, n_h}_{k_{\t}} = 1
\end{equation}
Indeed, it follows from the coercivity condition (\ref{coercivity}) that $\max_{\Z^2 \times \N}(u^{m, n}_{k} - \varphi(m h, n h; k \t))$ is attained at some $m = m_h$, $n = n_h$ an $k = k_{\t}$, and up to a subsequence,
there exist $\bar{x} = (\bar{x}_1, \bar{x}_2)$ and $\bar{t}$ such that $h m_h \to \bar{x}_1$, $h n_h \to \bar{x}_2$ and $\t k_{\t} \to \bar{t}$. This point $(\bar{x}, \bar{t})$ must be the global maximum of $\bar{u} - \varphi$.
In addition, (\ref{lim1}) holds, for otherwise it will contradict
(\ref{existmax}) and the definition of $\bar{u}$.

Since $u^{m_h, n_h}_{k_{\t}}$ takes values only $1$ and $-1$, it follows from (\ref{lim1}) that $u^{m_h, n_h}_{k_{\t}} \equiv 1$ for sufficiently small $h$. For such $h$, the fact that $(m_h, n_h; k_{\t})$ is a maximum point implies that for all $m,n$ and $k$ in $\Z$, we have
\[
u^{m, n}_{k} \leq 1 - \varphi(m_h h, n_h h; k_{\t} \t) + \varphi(mh, nh; k)
\]
Using the same reasoning in \cite[p. 490]{Barles},
we conclude that the above inequality leads to
\[
S^h(\t)\left[
{\rm sign}^{*}\Big(
\f(\cdot, (k_{\t} - 1)\t) - \f(m_h h, n_h h; k_{\t} \t)
\Big)
\right](m_h h, n_h h) \geq 0
\]
where $\text{sign}^*$ is the upper semi-continuous envelope of the
sign function. The above is equivalent to
\[
S^h(\t)\left[\left(\frac{1+ {\rm sign}^{*}}{2}\right)
\Big(
\f(\cdot, (k_{\t} - 1)\t) - \f(m_h h, n_h h; k_{\t} \t)
\Big)
\right](m_h h, n_h h) \geq \frac{1}{2}.
\]
In terms of discrete kernels, the above inequality reads as
\begin{equation}\label{ineq1}
\sum_{(m,n) \in Q_h} G_{m - m_h}(\a) G_{n - n_h}(\a)  \geq \frac{1}{2},
\,\,\,\,\,\,\alpha=\frac{2\t}{h^2}
\end{equation}
where
\begin{equation}\label{Qh}
Q_h = \{(m,n) \in \Z^2: \f(m h, n h; (k_{\t} - 1) \t) - \f(m_h h, n_h h; k_{\t} \t) \geq 0 \}
\end{equation}

\noindent
\Bf{Step II.} In this step, we will show how \eqref{ineq1} leads to
\eqref{consistency2sub} and \eqref{consistency2super}. We first express the
left hand side of \eqref{ineq1} as an integral. For this, we write
\Beqn\label{ineq1.RS}
\sum_{(m,n) \in Q_h} G_{m - m_h}(\a) G_{n - n_h}(\a) = \int_{\tilde{Q}_h} G_{[\tilde x_1]}(\alpha) G_{[\tilde x_2]} (\a) d \tilde x_1  \, d \tilde x_2,
\Eeqn
with
\[
\tilde{Q}_h := \{(\tilde x_1, \tilde x_2) \in \R^2: ([\tilde x_1] - m_h, [\tilde x_2] - n_h) \in Q_h\}
\]
Next, introducing $x_1 = \frac{h \tilde x_1}{\sqrt{\t}}$ and $x_2 = \frac{h \tilde x_2}{\sqrt{\t}}$, we rewrite \eqref{ineq1.RS} as
\begin{equation}\label{for1/2}
\frac{\t}{h^2} \int_{\hat{Q}_h} G_{\left[\frac{\sqrt{\t} x_1}{h}\right]}(\alpha) G_{\left[\frac{\sqrt{\t} x_2}{h}\right]} (\a) d  x_1  \, d x_2 \geq  \frac{1}{2},
\end{equation}
where
\Beqn
\hat{Q}_h = \frac{h}{\sqrt{\t}} \tilde{Q}_h = \{(x_1, x_2) \in \R^2: \left(\left[\frac{\sqrt{\t}x_1}{h}\right] - m_h, \left[\frac{\sqrt{\t}x_2}{h}\right] - n_h\right) \in Q_h\}
\Eeqn

\Em{Claim.}
For any $G \subset \R^2$, $f: G \to \R$ and $\delta > 0$ denote $RS_\dl[f,G]$ to be the Riemann sum for $f$ in $G$ with step $\dl$, i.e.
\[
RS_\dl[f,G] : = \dl^2 \sum_{(m,n)\in\Z^2,(m\dl,n\dl) \in G} f(m\dl, n\dl).
\]
For $f = e^{-\frac{|x|^2}{4}}$ and $G \subset \R^2$ we have the following error estimate between the Riemann sum and the integral:
\begin{equation}\label{RSest}
\left|RS_{\delta}[e^{-\frac{|x|^2}{4}}, G] - \int_{G} e^{-\frac{|x|^2}{4}} dx \right| \leq C \delta
\end{equation}
for some $C>0$ which does not depend on $G$ and $\delta$. The estimate (\ref{RSest}) follows from a 1D inequality
\begin{equation}\label{RSest1D}
\left|RS_{\delta}[e^{-\frac{x^2}{4}}, [a,b]] - \int_{a}^{b} e^{-\frac{x^2}{4}} dx \right| \leq 2 \delta
\end{equation}
which holds uniformly for all $-\infty \leq a < b \leq + \infty$.

Now we proceed to analyze \eqref{for1/2}. Due to (\ref{expansion_key}), we have
\begin{eqnarray}
& & \frac{\t}{h^2} \int_{\hat{Q}_h} G_{\left[\frac{\sqrt{\t} x_1}{h}\right]}(\alpha) G_{\left[\frac{\sqrt{\t} x_2}{h}\right]} (\a) d  x_1  \, d x_2  \nonumber\\
& = & RS_{\frac{h}{\sqrt{\t}}}\frac{1}{4 \pi}[e^{\frac{-x_1^2 - x_2^2}{4}}, \hat{Q}_h]
+ C_1 \frac{h}{\sqrt{\t}} RS_{\frac{h}{\sqrt{\t}}}[\frac{1}{|x_1|}e^{\frac{-x_1^2 - x_2^2}{4}}, \hat{Q}_h] + C_2 \frac{h}{\sqrt{\t}} RS_{\frac{h}{\sqrt{\t}}}[\frac{1}{|x_2|}e^{\frac{-x_1^2 - x_2^2}{4}}, \hat{Q}_h]\nonumber\\
& & + o\left(\frac{h}{\sqrt{\t}}\right).\label{xxx}
\end{eqnarray}
For $i = 1,2$, by \eqref{asymp1}:
\begin{equation}\label{errest1}
RS_{\frac{h}{\sqrt{\t}}}[\frac{1}{|x_i|}e^{\frac{-x_1^2 - x_2^2}{4}}, \hat{Q}_h] \leq  \int_{\R^2 \setminus [-\frac{h}{\sqrt{\t}}, \frac{h}{\sqrt{\t}}]^2} \frac{1}{|x_i|} e^{\frac{-x_1^2 - x_2^2}{4}} d x_1 \, d x_2 + O\left(\frac{h}{\sqrt{\t}}\right) = O\left(\ln \frac{h}{\sqrt{\t}}\right).
\end{equation}
Applying \eqref{RSest} to (\ref{xxx}) and making use of (\ref{errest1}),
inequality (\ref{for1/2}) becomes
\begin{equation}\label{yyy}
\frac{1}{4 \pi} \int_{\hat{Q}_h} e^{\frac{-x_1^2 - x_2^2}{4}} d x_1 \, d x_2 + M_h \geq \frac{1}{2},
\end{equation}
with
\begin{equation}\label{Log}
M_h = O\left(\frac{h}{\sqrt{\t}} \ln \frac{h}{\sqrt{\t}}\right).
\end{equation}
Roughly speaking $\hat{Q}_h$ is a discretized set. In order to apply \cite[Prop. 4.1]{Barles} we need to obtain an estimate (\ref{yyy}) in which $\hat{Q}_h$ is replaced with its continuum analog with well-controlled error.
Consider
\[
Q^{cont}_h : = \{(x_1, x_2) \in \R^2, \f(\sqrt{\t} x_1 - m_h h, \sqrt{\t} x_2 - n_h h; (k_{\t} - 1) \t) - \f(m_h h, n_h h; k_{\t} \t) \geq 0 \}
\]
Since $\f$ is a smooth function which satisfies the growth condition (\ref{coercivity}), $\partial \hat{Q}_h$ is bounded. Moreover, if $x_1^* \in \R$  is chosen s.t. $\frac{\sqrt{\t}}{h} x_1^* \in \Z$, $f(y):=\left[\frac{\sqrt{\t} y}{h}\right]$ is constant for $y \in [x_1^*, x_1^* + \frac{h}{\sqrt{\t}})$. This way, for any $(x_1, x_2) \in \d \hat{Q}_h$ we have ${\rm dist } [(x_1, x_2), \d Q^{cont}_h] \leq \frac{h}{\sqrt{\t}} $. This enables us to say that
\[
|(Q^{cont}_h \setminus \hat{Q}_h) \cup (\hat{Q}_h \setminus Q^{cont}_h) | \leq C\frac{h}{\sqrt{\t}},
\]
which, in turn, implies that

\begin{equation}\label{boundary_layer}
\left|\frac{1}{4 \pi} \int_{\hat{Q}_h} e^{\frac{-x_1^2 - x_2^2}{4}} d x_1 \, d x_2 - \frac{1}{4 \pi} \int_{Q^{cont}_h} e^{\frac{-x_1^2 - x_2^2}{4}} d x_1 \, d x_2 \right| \leq C \frac{h}{\sqrt{\t}}
\end{equation}
with $C$ independent on $h$.
After rescaling, we have
\Beqn\label{resc}
\frac{1}{4 \pi} \int_{Q^{cont}_h} e^{\frac{-x_1^2 - x_2^2}{4}} d x_1 \, d x_2 = \frac{1}{4 \pi \t} \int_{Q^{resc}_h} e^{\frac{-(x_1-m_h h)^2 - (x_2-n_h h)^2}{4 \t}} d x_1 \, d x_2
\Eeqn
where
\[
Q^{resc}_h : = \{(x_1, x_2) \in \R^2, \f(x_1, x_2; (k_{\t} - 1) \t) - \f(m_h h, n_h h; k_{\t} \t) \geq 0 \}
\]
Combining (\ref{yyy}) with (\ref{boundary_layer}) and (\ref{resc}), we have
\[
0 \leq \frac{1}{2} - \sum_{(m,n) \in Q_h} G_{m - m_h}(\a) G_{n - n_h}(\a) = \frac{1}{2} - \frac{1}{4 \pi \t} \int_{Q^{resc}_h} e^{\frac{-(x_1-m_h h)^2 - (x_2-n_h h)^2}{4 \t}} dx_1 \, dx_2 + M_h.
\]

Next we consider two cases.
If $D\phi((\bar{x}_1,\bar{x}_2);\bar{t}) \neq 0$, we apply the result of
\cite[Prop. 4.1]{Barles} with $\phi_{h}((x_1,x_2);t) := \phi((x_1,x_2);t) - \phi((m_h h, n_h h); k_{\t} \t)$:
\[
0 \geq \liminf_{h} \frac{1}{\sqrt{\t}} \left(\frac{1}{2} - \sum_{(m,n) \in Q_h} G_{m - m_h}(\a) G_{n - n_h}(\a) \right) =
\]
\[
 = \liminf_{h} \frac{1}{\sqrt{\t}} \left(\frac{1}{2} - \frac{1}{4 \pi \t} \int_{Q^{resc}_h} e^{\frac{-(x_1-m_h h)^2 - (x_2-n_h h)^2}{4 \t}} dx_1 \, dx_2 + M_h\right) \geq
\]
\[
\geq \frac{1}{2 \sqrt{\pi}|D \phi((\bar{x}_1,\bar{x}_2),t)|}\left(\frac{\d \phi}{\d t}  - \Delta \phi + \frac{(D^2 \phi D \phi | D \phi)}{|D\phi|^2}\right)((\bar{x}_1,\bar{x}_2), \bar{t})
\]
which yields the desired result.

It remains to show that in the case when $D\phi((\bar{x}_1,\bar{x}_2);\bar{t}) = 0$, $D^2\phi((\bar{x}_1,\bar{x}_2);\bar{t}) = 0$, and
\begin{equation}\label{qqq}
\frac{1}{2} - \frac{1}{4 \pi \t} \int_{Q^{resc}_h} e^{\frac{-(x_1-m_h h)^2 - (x_2-n_h h)^2}{4 \t}} dx_1 \, dx_2 + M_h \leq 0
\end{equation}
with $M_h$ satisfying (\ref{Log}) for sufficiently small $h$, we have
$\frac{\partial \phi}{\partial t}((\bar{x_1},\bar{x_2}); \bar{t}) \leq 0$. The condition (\ref{qqq}), strictly speaking, is different from the one in Proposition 4.1 \cite{Barles}, yet enables us to apply the technique developed in \cite{Barles}. In particular, three cases are possible:
\begin{enumerate}
\item[I.] Along some subsequence $|D \phi(m_h h, n_h h; k_{\t} \t)| \neq 0$, and $\frac{\sqrt{\t}}{|D \phi(m_h h, n_h h; k_{\t} \t)|} \to 0$;
\item[II.] Along some subsequence $|D \phi(m_h h, n_h h; k_{\t} \t)| \equiv 0$ or $\frac{\sqrt{\t}}{|D \phi(m_h h, n_h h; k_{\t} \t)|} \to \infty$;
\item[III.] Along some subsequence $|D \phi(m_h h, n_h h; k_{\t} \t)| \neq 0$, and $\frac{\sqrt{\t}}{|D \phi(m_h h, n_h h; k_{\t} \t)|} \to l > 0$.
\end{enumerate}

In Case I, arguing as in \cite{Barles}, we deduce that
\begin{multline}\label{aaa}
\frac{|D \phi(m_h h, n_h h; k_{\t} \t)|}{\sqrt{\t}} \left(\frac{1}{2} - \frac{1}{4 \pi \t} \int_{Q^{resc}_h} e^{\frac{-(x_1-m_h h)^2 - (x_2-n_h h)^2}{4 \t}} dx_1 \, dx_2\right) \geq \\
\geq \frac{1}{2 \sqrt{\pi}} \left(\frac{\d \phi}{\d t}(x_1, x_2; t) - c\right) + o(1)
\end{multline}
for any $c>0$. However, since $$\frac{|D \phi(m_h h, n_h h; k_{\t} \t)|}{\sqrt{\t}} M_h = o\left(\frac{h}{\t} \ln \frac{h}{\sqrt{\t}} \right) \to 0,$$
passing to the limit in (\ref{aaa}) as $h \to 0$, using (\ref{qqq}), we have $\frac{\d \phi}{\d t}(x_1, x_2; t) - c \leq 0$ for all $c>0$, which yields the desired result.

The conclusion in the Cases II and III follows from Proposition 4.1 \cite{Barles} without any change.

\section{Motion in the critical case: $\t = \mu h$.}
This section provides the proofs for the statements related to the
critical case.

\subsection{Proof of Theorem \ref{Th2}}
The results will follow from the following statements.
\begin{itemize}
\item[(i)] \Bf{(Lower bound)}
If $w^{0,-n_0}(\tau) \geq \frac{1}{2}$ we have
\Beqn\label{lower.bd}
\sum_{k = 1}^{n_0 - 1} \int_{0}^{\sqrt{\frac{2k}{\mu \kappa}}} e^{- \frac{x^2}{4}} dx + \frac{1}{2} \int_{0}^{\sqrt{\frac{2 n_0}{\mu \kappa}}} e^{- \frac{x^2}{4}} dx \geq \frac{1}{2} \int_{\sqrt{\frac{2 n_0}{\mu \kappa}}}^{\infty} e^{- \frac{x^2}{4}} dx + \sum_{k = n_0 + 1}^{\infty} \int_{ \sqrt{\frac{2 k}{\mu \kappa}}}^{\infty} e^{- \frac{x^2}{4}} dx
\Eeqn
\item[(ii)] \Bf{(Upper bound)}
If $w^{0,-n_0}(\tau) \leq \frac{1}{2}$, we have
\Beqn\label{upper.bd}
\sum_{k = 1}^{n_0 - 1} \int_{0}^{\sqrt{\frac{2k}{\mu \kappa}}} e^{- \frac{x^2}{4}} dx + \frac{1}{2} \int_{0}^{\sqrt{\frac{2 n_0}{\mu \kappa}}} e^{- \frac{x^2}{4}} dx \leq \frac{1}{2} \int_{\sqrt{\frac{2 n_0}{\mu \kappa}}}^{\infty} e^{- \frac{x^2}{4}} dx + \sum_{k = n_0 + 1}^{\infty} \int_{ \sqrt{\frac{2 k}{\mu \kappa}}}^{\infty} e^{- \frac{x^2}{4}} dx
\Eeqn
\end{itemize}

We refer to page \pageref{setgraph} for the notation concerning the set
$\Omega$.
Assume $n_0 \geq 0$ is such that $w^{0,-n_0}(\tau) \geq \frac{1}{2}$. By Lemma 1,
\[
\frac{1}{2} \leq w^{0,-n_0}(\t) = \sum_{(sh,jh) \in \Omega}  G_{s} (\a) G_{j + n_0}(\a) = \sum_{s = -\infty}^{\infty} \sum_{j = -\infty}^{\left[\frac{\f (sh)}{h}\right]} G_{s} (\a) G_{j + n_0}(\a) + o\left(\frac{e}{3}\right)^{\frac{1}{h}}.
\]
Having shifted the summation indices, we get
\begin{equation}\label{sum to int 1}
\frac{1}{2} \leq \sum_{s = -\infty}^{\infty} \sum_{j = -\infty}^{\left[\frac{\f (sh)}{h}\right] + n_0} G_{s} (\a) G_{j}(\a) + o\left(\frac{e}{3}\right)^{\frac{1}{h}}.
\end{equation}
Similarly, if $w^{0,-n_0}(\tau) \leq \frac{1}{2}$, we have
\begin{equation}\label{sum to int 2}
\frac{1}{2} \geq \sum_{s = -\infty}^{\infty} \sum_{j = -\infty}^{\left[\frac{\f (sh)}{h}\right] + n_0} G_{s} (\a) G_{j}(\a) + o\left(\frac{e}{3}\right)^{\frac{1}{h}}.
\end{equation}
We briefly outline the idea of the proof. The relations (\ref{sum to int 1}) and (\ref{sum to int 2}) implicitly define $n_0$ in terms of $h$.
Our goal is to obtain the description of $n_0$, independent of $h$. To this end, we are going to establish the asymptotic (in $h$) expansion of the sum in the right hand side of (\ref{sum to int 1}) and (\ref{sum to int 2}):
\begin{equation}\label{summation}
\sum_{s = -\infty}^{\infty} \sum_{j = -\infty}^{\left[\frac{\f (sh)}{h}\right] + n_0} G_{s} (\a) G_{j}(\a) = \frac{1}{2} + \frac{1}{\sqrt{4 \pi}} \Phi(n_0, \mu, \kappa) \sqrt{h} + o(\sqrt{h}), h \to 0
\end{equation}
where
\Beqn\label{Psi}
\Phi(n_0, \mu, \kappa):= \sum_{k = 1}^{n_0 - 1} \int_{0}^{\sqrt{\frac{2k}{\mu \kappa}}} e^{- \frac{x^2}{4}} dx + \frac{1}{2} \int_{0}^{\sqrt{\frac{2 n_0}{\mu \kappa}}} e^{- \frac{x^2}{4}} dx - \frac{1}{2} \int_{\sqrt{\frac{2 n}{\mu \kappa}}}^{\infty} e^{- \frac{x^2}{4}} dx - \sum_{k = n_0 + 1}^{\infty} \int_{ \sqrt{\frac{2 k}{\mu \kappa}}}^{\infty} e^{- \frac{x^2}{4}} dx,
\Eeqn
and the desired result follows.

It is convenient to group the indices in the summation (\ref{summation}) into the
following sets:
\[
I_1:=\{(s,j): 0 < jh < \varphi(sh) + n_0  h\},
\]
\[
I_2:=\{(s,j): \varphi(sh) + n_0 h \leq jh < 0\},
\]
\[
I_3:=\{(s,j): j h < \min \{\varphi(sh) + n_0 h, 0\}\}.
\]
Denote also
\[
b I_1:=\{(s,0): 0 < \varphi(sh) + n_0 h\},
\]
\[
b I_2:=\{(s,0): 0 \geq \varphi(sh) + n_0 h\}.
\]
(See Fig. \griddisect.)

\begin{center}
\includegraphics [width=1.8in, angle=270]{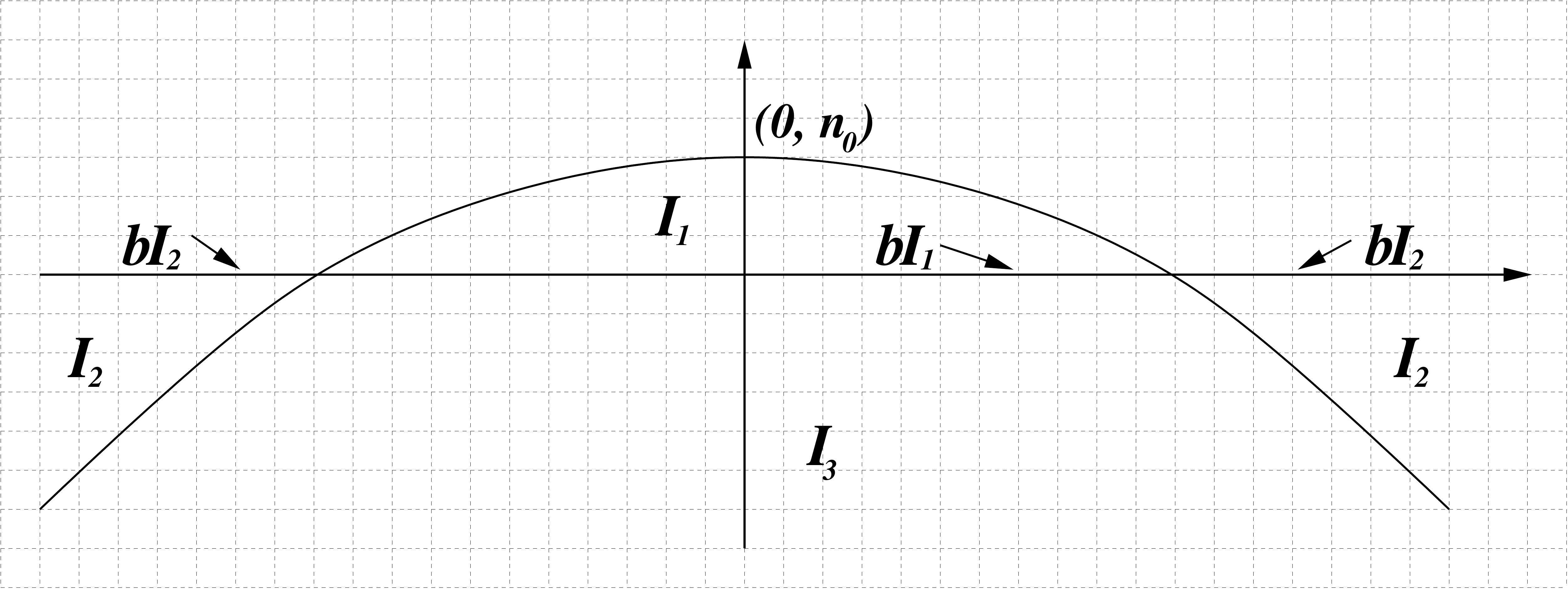}\\
Figure \griddisect.
\end{center}


With the above, we have
\[
\sum_{s = -\infty}^{\infty} \sum_{j = -\infty}^{\left[\frac{\f (sh)}{h}\right] + n_0} G_{s} (\a) G_{j}(\a) = \sum_{(s,j) \in I_1} G_{s} (\a) G_{j}(\a) + \sum_{(s,j) \in I_3} G_{s} (\a) G_{j}(\a) + \sum_{(s,j) \in b I_1} G_{s} (\a) G_{j}(\a)
\]
On the other hand, it follows from the properties of discrete heat kernels that
\begin{equation}\label{in1}
\sum_{(s,j) \in I_2} G_{s} (\a) G_{j}(\a) + \sum_{(s,j) \in I_3} G_{s} (\a) G_{j}(\a) + \frac{1}{2}\sum_{(s,j) \in b I_1} G_{s} (\a) G_{j}(\a) + \frac{1}{2}\sum_{(s,j) \in b I_2} G_{s} (\a) G_{j}(\a) = \frac{1}{2}
\end{equation}
In view of (\ref{sum to int 1}),
\begin{equation}\label{in2}
\sum_{(s,j) \in I_1} G_{s} (\a) G_{j}(\a) + \sum_{(s,j) \in I_3} G_{s} (\a) G_{j}(\a) + \sum_{(s,j) \in b I_1} G_{s} (\a) G_{j}(\a) \geq \frac{1}{2} + o\left(\frac{e}{3}\right)^{\frac{1}{h}}
\end{equation}
Subtracting (\ref{in2}) from (\ref{in1}), we obtain
\begin{equation}\label{pre_lower}
\sum_{(s,j) \in I_1} G_{s} (\a) G_{j}(\a) + \frac{1}{2} \sum_{(s,j) \in b I_1} G_{s} (\a) G_{j}(\a) \geq \sum_{(s,j) \in I_2} G_{s} (\a) G_{j}(\a) + \frac{1}{2}\sum_{(s,j) \in  I_2} G_{s} (\a) G_{j}(\a) + o\left(\frac{e}{3}\right)^{\frac{1}{h}}
\end{equation}

Now for a fixed $n \geq 1$, let $s(n) > 0$ be the integer satisfy
\begin{equation*}\label{s(n)}
\left[\frac{\f (s(n) h)}{h}\right] = - n.
\end{equation*}
Since
\[
\f(sh) = \f(0) + sh \f^{'}(0) + \frac{(sh)^2}{2} \f^{''}(0) + O(h^3) = -\frac{\kappa}{2} (sh)^2 + O(h^3),
\]
we have
\begin{equation}\label{expr}
s(n) = \left[\sqrt{\frac{2 n}{\kappa h}}\right].
\end{equation}

Changing the order of summation in (\ref{pre_lower}) using the notation (\ref{expr}), we have if $w^{0,-n_0}(\tau) \geq \frac{1}{2}$, then
\begin{multline}\label{identity1}
\sum_{j=1}^{n_0} G_j(\a)\sum_{|s| \leq s(n_0-j)} G_s(\a) + \frac{1}{2}G_0(\a)\sum_{|s| \leq s(-n_0)}G_s(\a) \geq \\
\frac{1}{2}G_0(\a)\sum_{|s| \geq s(-n_0)+1}G_s(\a) + \sum_{j = -\infty}^{-1} G_j(\a) \sum_{|s| > s(n_0 - j + 1)} G_s(\a) + o\left(\frac{e}{3}\right)^{\frac{1}{h}}
\end{multline}
Using the same reasoning, we get that if $w^{0,-n_0}(\t) \leq \frac{1}{2}$, we have
\begin{multline}\label{identity2}
\sum_{j=1}^{n_0} G_j(\a)\sum_{|s| \leq s(n_0-j)} G_s(\a) + \frac{1}{2}G_0(\a)\sum_{|s| \leq s(-n_0)}G_s(\a) \leq \\
\frac{1}{2}G_0(\a)\sum_{|s| \geq s(-n_0)+1}G_s(\a) + \sum_{j = -\infty}^{-1} G_j(\a) \sum_{|s| > s(n_0 - j + 1)} G_s(\a) + o\left(\frac{e}{3}\right)^{\frac{1}{h}}
\end{multline}

Roughly speaking, the main technical difficulty in establishing the asymptotic behavior of the terms in (\ref{identity2}) is caused by the term
\[
\sum_{j = -\infty}^{-1} G_j(\a) \sum_{|s| > s(n_0 - j + 1)} G_s(\a)
\]
since it involves an infinite sum of the products of Bessel functions and Riemann sums. To proceed, we are going to estimate this term from below using finitely many such products, thus obtaining the lower bound \eqref{lower.bd}. On the other hand, we are going to estimate this term from above, using a priori bounds for Bessel functions, and get the upper bound \eqref{upper.bd}. Once we verify that those bounds match, we will obtain the asymptotic expansion of all terms in (\ref{identity2}).

\subsection{The lower bound \eqref{lower.bd}.}

Let $w^{0,-n_0}(\tau) \geq \frac{1}{2}$. Fix an arbitrary $N \geq 1$. It follows from (\ref{identity2}) that
\begin{multline}\label{identityLB}
\sum_{j=1}^{n_0}G_j(\a)\sum_{|s| \leq s(n_0-j)} G_s(\a) + \frac{1}{2}G_0(\a)\sum_{|s| \leq s(-n_0)}G_s(\a) \geq \\
\frac{1}{2}G_0(\a)\sum_{|s| \geq s(-n_0)+1}G_s(\a) + \sum_{j = -N}^{-1} G_j(\a) \sum_{|s| > s(n_0 - j + 1)} G_s(\a)
\end{multline}
For every $j = 1, \ldots, n_0$, by (\ref{expr}), we have that
\[
\sum_{|s| \leq s(n_0-j)} G_s(\a) = 1 - \sum_{|s|>s(n_0-j)}G_s(\a) = 1 - \sum_{|s|>\sqrt{\frac{2(n_0 - j)}{\kappa h}}} G_s(\a)
\]
Similarly to the proof of Theorem \ref{P1.Case1}, we perform the following re-scaling, using the fact that $t = \mu h$:
\begin{equation}\label{sum1}
\sum_{|s|>\sqrt{\frac{n_0 - j}{\kappa h}}} G_s(\a) = \int_{|s|>\sqrt{\frac{2(n_0 - j)}{\kappa h}}} G_{[s]}(\a) d s = 2 \frac{\sqrt{\t}}{h} \int_{\sqrt{\frac{2(n_0 - j)}{\mu \kappa}}}^{\infty} G_{[\frac{\sqrt{\t} x}{h}]}(\a) dx
\end{equation}
Using Proposition 1, we can write the above as
\begin{eqnarray}\label{sum2}
\frac{\sqrt{\t}}{h} \int_{\sqrt{\frac{2(n_0 - j)}{\mu \kappa}}}^{\infty} G_{[\frac{\sqrt{\t} x}{h}]}(\a) dx = \frac{1}{\sqrt{4 \pi}} RS_{\frac{h}{\sqrt{\t}}}\left[e^{-\frac{x^2}{4}}, \left(\sqrt{\frac{2(n_0 - j)}{\mu \kappa}}, +\infty \right)\right] + \nonumber\\
+ C \frac{h}{\sqrt{\t}}RS_{\frac{h}{\sqrt{\t}}}\left[\frac{1}{x}e^{-\frac{x^2}{4}}, \left(\sqrt{\frac{2(n_0 - j)}{\mu \kappa}}, +\infty \right)\right].
\end{eqnarray}
Therefore, using the uniform Riemann sum estimate (\ref{RSest1D}), we have
\begin{equation*}\label{Discr_to_cont}
\sum_{|s|>\sqrt{\frac{n_0 - j}{\kappa h}}} G_s(\a) = \frac{2}{\sqrt{4\pi}} \int_{\sqrt{\frac{2(n_0 - j)}{\mu \kappa}}}^{\infty} e^{-\frac{x^2}{4}} dx + O(\sqrt{h})
\end{equation*}
By property (\ref{property1}),
\[
\sum_{|s| \leq s(n_0-j)} G_s(\a) = 1 - \sum_{|s|>\sqrt{\frac{n_0 - j}{\kappa h}}} G_s(\a)  =  \frac{2}{\sqrt{4\pi}}\int_{0}^{\sqrt{\frac{2(n_0 - j)}{\mu \kappa}}} e^{-\frac{x^2}{4}} dx + O(\sqrt{h})
\]
Combining the latter expansion with (\ref{asymp1}), we have
\begin{equation}\label{eq1}
\sum_{j=1}^{n_0}G_j(\a)\sum_{|s| \leq s(n_0-j)} G_s(\a) = 2 \frac{\sqrt{h}}{4 \pi \sqrt{\mu}} \sum_{k = 1}^{n_0 - 1} \int_{0}^{\sqrt{\frac{2 k}{\mu \kappa}}} e^{- \frac{x^2}{4}} dx + O(h), h \to 0.
\end{equation}
Similarly,
\begin{equation}\label{eq2}
\frac{1}{2}G_0(\a)\sum_{|s| \leq S_0}G_s(\a) = \frac{\sqrt{h}}{4 \pi \sqrt{\mu}} \int_{0}^{\sqrt{\frac{2 n_0}{\mu \kappa}}} e^{- \frac{x^2}{4}} dx + O(h), h \to 0,
\end{equation}
\begin{equation}\label{eq3}
\frac{1}{2}G_0(\a)\sum_{|s| \geq S_0+1}G_s(\a) = \frac{\sqrt{h}}{4 \pi \sqrt{\mu}} \int_{\sqrt{\frac{2 n_0}{\mu \kappa}}}^{\infty} e^{-\frac{x^2}{4}} dx + O(h), h \to 0.
\end{equation}
and
\begin{equation}\label{eq4}
\sum_{j = -N}^{-1} G_j(\a) \sum_{|s| > s(n_0 - j + 1)} G_s(\a) = 2 \frac{\sqrt{h}}{4 \pi \sqrt{\mu}} \sum_{k = n_0 + 1}^{n_0 + N} \int_{\sqrt{\frac{2 k}{\mu \kappa}}}^{\infty} e^{-\frac{x^2}{4}} dx + O(h)
\end{equation}
Combining (\ref{eq1}), (\ref{eq2}), (\ref{eq3}) and (\ref{eq4}), the inequality (\ref{identityLB}) implies
\[
\sum_{k = 1}^{n_0 - 1} \int_{0}^{\sqrt{\frac{2k}{\mu \kappa}}} e^{- \frac{x^2}{4}} dx + \frac{1}{2} \int_{0}^{\sqrt{\frac{2 n_0}{\mu \kappa}}} e^{- \frac{x^2}{4}} dx \geq \frac{1}{2} \int_{\sqrt{\frac{2 n_0}{\mu \kappa}}}^{\infty} e^{- \frac{x^2}{4}} dx + \sum_{k = n_0 + 1}^{n_0 + N} \int_{\sqrt{\frac{2 k}{\mu \kappa}}}^{\infty} e^{- \frac{x^2}{4}} dx
\]
and since $N \geq 1$ was chosen arbitrarily, letting $N\Converge\infty$,
we arrive at:
\begin{equation}\label{lowerB}
\sum_{k = 1}^{n_0 - 1} \int_{0}^{\sqrt{\frac{2k}{\mu \kappa}}} e^{- \frac{x^2}{4}} dx + \frac{1}{2} \int_{0}^{\sqrt{\frac{2 n_0}{\mu \kappa}}} e^{- \frac{x^2}{4}} dx \geq \frac{1}{2} \int_{\sqrt{\frac{2 n_0}{\mu \kappa}}}^{\infty} e^{- \frac{x^2}{4}} dx + \sum_{k = n_0 + 1}^{\infty} \int_{\sqrt{\frac{2k}{\mu \kappa}}}^{\infty} e^{- \frac{x^2}{4}} dx
\end{equation}
Thus, (i) follows.

\subsection{The matching upper bound.}

Now, let $w^{0,-n_0}(\t) \leq \frac{1}{2}$. It follows from (\ref{asymp1}), as well as from the monotonicity of $G_n(\a)$ in $n$, that the inequality
\[
G_n(\a) \leq \sqrt{\frac{h}{4 \pi \mu}}
\]
holds for all $n \geq 1$. Thus, from (\ref{identity1}) we have
\begin{multline}\label{identityUB}
\sum_{j=1}^{n_0}G_j(\a)\sum_{|s| \leq s(n_0-j)} G_s(\a) + \frac{1}{2}G_0(\a)\sum_{|s| \leq s(-n_0)}G_s(\a) \leq \\
\leq \frac{1}{2}G_0(\a)\sum_{|s| \geq s(-n_0)}G_s(\a) +  \sqrt{\frac{h}{4 \pi}} \sum_{j = -\infty}^{-1} \sum_{|s| > s(n_0 - j + 1)} G_s(\a)
\end{multline}
Now, fix $j \leq -1$, and denote $m := n_0 - j + 1$. Using (\ref{sum1}) and (\ref{sum2}), we have
\begin{eqnarray}\label{p}
\sum_{|s| > s(m)} G_s(\a) = \frac{2}{\sqrt{4 \pi}} RS_{\frac{h}{\sqrt{\t}}}\left[e^{-\frac{x^2}{4}}, \left(\sqrt{\frac{2 m}{\mu \kappa}}, +\infty \right)\right] + \\
\nonumber + C \frac{h}{\sqrt{\t}}RS_{\frac{h}{\sqrt{\t}}}\left[\frac{1}{x}e^{-\frac{x^2}{4}}, \left(\sqrt{\frac{2 m}{\mu \kappa}}, +\infty \right)\right].
\end{eqnarray}
Note that
\begin{equation}\label{q1}
\left|RS_{\frac{h}{\sqrt{\t}}}\left[e^{-\frac{x^2}{4}}, \left(\sqrt{\frac{2 m}{\mu \kappa}}, +\infty \right)\right] - \int_{\sqrt{\frac{2m}{\mu \kappa}}}^{\infty} e^{- \frac{x^2}{4}} dx \right| \leq \frac{h}{\sqrt{\t}} e^{-\frac{m}{2 \mu \kappa}}
\end{equation}
and
\begin{equation}\label{q2}
\left|RS_{\frac{h}{\sqrt{\t}}}\left[\frac{1}{x}e^{-\frac{x^2}{4}}, \left(\sqrt{\frac{2 m}{\mu \kappa}}, +\infty \right)\right] - \int_{\sqrt{\frac{2m}{\mu \kappa}}}^{\infty} \frac{1}{x}e^{- \frac{x^2}{4}} dx \right| \leq \frac{h}{\sqrt{\t}} e^{-\frac{m}{2 \mu \kappa}}.
\end{equation}
Moreover, we have
\begin{equation}\label{q3}
\int_{\sqrt{\frac{2m}{\mu \kappa}}}^{\infty} \frac{1}{x} e^{-\frac{x^2}{4}} dx \leq C \int_{\sqrt{\frac{2m}{\mu \kappa}}}^{\infty} \frac{x}{2} e^{-\frac{x^2}{4}} dx = C e^{-\frac{m}{2 \mu \kappa}}
\end{equation}
Combining (\ref{q1}), (\ref{q2}) and (\ref{q3}), the equation (\ref{p}) reads as
\[
\sum_{|s| > s(m)} G_s(\a) = \frac{2}{\sqrt{4 \pi}} \int_{\sqrt{\frac{2m}{\mu \kappa}}}^{\infty} e^{- \frac{x^2}{4}} dx + \frac{2}{\sqrt{4 \pi \mu}}  e^{-\frac{m}{2 \mu \kappa}}\left(\sqrt{h} + o(\sqrt{h})\right)
\]
Therefore,


\[
\sum_{j = -\infty}^{-1} \sum_{|s| > s(n_0 - j + 1)} G_s(\a) = \frac{1}{\sqrt{4\pi}}\sum_{k = n_0 + 1}^{\infty} \int_{\sqrt{\frac{2k}{\mu\kappa}}}^{\infty} e^{- \frac{x^2}{4}} dx + O(\sqrt{h}).
\]
Using (\ref{eq1}), (\ref{eq2}) and (\ref{eq3}), the inequality (\ref{identityUB}) yields the upper bound
\begin{equation}\label{upperB}
\sum_{k = 1}^{n_0 - 1} \int_{0}^{\sqrt{\frac{2 k}{\mu \kappa}}} e^{- \frac{x^2}{4}} dx + \frac{1}{2} \int_{0}^{\sqrt{\frac{2 n_0}{\mu \kappa}}} e^{- \frac{x^2}{4}} dx \leq \frac{1}{2} \int_{\sqrt{\frac{2 n_0}{\mu \kappa}}}^{\infty} e^{- \frac{x^2}{4}} dx + \sum_{k = n_0 + 1}^{\infty} \int_{\sqrt{\frac{2k}{\mu \kappa}}}^{\infty} e^{- \frac{x^2}{4}} dx
\end{equation}
which shows (ii).

\subsection{Proof of Theorem \ref{Th3}:
the limiting case $\mu \to \infty$.}
Recall that $n_0$ satisfies
\Beqn\label{I1}
\sum_{k = 1}^{n_0 - 1} \int_{0}^{\sqrt{\frac{2k}{\mu \kappa}}} e^{- \frac{x^2}{4}} dx + \frac{1}{2} \int_{0}^{\sqrt{\frac{2 n_0}{\mu \kappa}}} e^{- \frac{x^2}{4}} dx \leq \frac{1}{2} \int_{\sqrt{\frac{2 n_0}{\mu \kappa}}}^{\infty} e^{- \frac{x^2}{4}} dx + \sum_{k = n_0 + 1}^{\infty} \int_{ \sqrt{\frac{2 k}{\mu \kappa}}}^{\infty} e^{- \frac{x^2}{4}} dx.
\Eeqn
and
\Beqn\label{I2}
\sum_{k = 1}^{n_0} \int_{0}^{\sqrt{\frac{2k}{\mu \kappa}}} e^{- \frac{x^2}{4}} dx + \frac{1}{2} \int_{0}^{\sqrt{\frac{2 n_0 + 2}{\mu \kappa}}} e^{- \frac{x^2}{4}} dx \geq \frac{1}{2} \int_{\sqrt{\frac{2 n_0 + 2}{\mu \kappa}}}^{\infty} e^{- \frac{x^2}{4}} dx + \sum_{k = n_0 + 2}^{\infty} \int_{ \sqrt{\frac{2 k}{\mu \kappa}}}^{\infty} e^{- \frac{x^2}{4}} dx.
\Eeqn
We first start with the following claim:
as $\mu \to \infty$, up to a subsequence, we have
\begin{equation}\label{claim}
\lim_{\mu \to \infty}\frac{n_0}{ \mu \kappa} = a, \text{ with } 0 < a < \infty.
\end{equation}

First, suppose $a = \infty$, i.e. $n_0 \gg\mu$.
Then, for sufficiently large $\mu$, we have $\frac{2k}{\mu \kappa} \geq 1$ for all $k \geq n_0 + 1$. Hence, we can estimate the right hand side of (\ref{I1})
in the following way:
\[
\sum_{k = n_0+1}^{\infty} \int_{\sqrt{\frac{2k}{\mu \kappa}}}^{\infty} e^{- \frac{x^2}{4}} dx \leq
\sum_{k = n_0+1}^{\infty} \int_{\sqrt{\frac{2k}{\mu \kappa}}}^{\infty} x e^{- \frac{x^2}{4}} dx  = 2 \frac{e^{\frac{-2(n_0 + 1)}{\mu \kappa}}}{1 - e^{-\frac{1}{2 \mu \kappa}}} = 2e^{\frac{-2(n_0 + 1)}{ \mu \kappa}} (2 \mu \kappa + o(\mu)) = o(\mu), \mu \to \infty.
\]
On the other hand, for sufficiently large $\mu$, we arrive at contradiction with (\ref{I1}), since
\[
\sum_{k = 1}^{n_0 - 1} \int_{0}^{\sqrt{\frac{2k}{\mu \kappa}}} e^{- \frac{x^2}{4}} dx \geq \sum_{k = 1}^{\mu \kappa + 1} \int_{0}^{\sqrt{\frac{2k}{\mu \kappa}}} e^{- \frac{x^2}{4}} dx \geq \sum_{k = 1}^{\mu \kappa + 1} \int_{0}^{\sqrt{\frac{2 k}{\mu \kappa}}} x e^{- \frac{x^2}{4}} dx = \frac{4}{e} \mu \kappa + o(\mu), \mu \to \infty
\]

Second, assume $a = 0$, i.e. $n_0\ll\mu$. In this case, the contradiction with (\ref{I2}) follows from the fact that
\[
\sum_{k = 1}^{n_0} \int_{0}^{\sqrt{\frac{2 k}{\mu \kappa}}} e^{- \frac{x^2}{4}} dx  \leq n_0 \int_{0}^{\sqrt{\frac{2 n_0}{\mu \kappa}}} e^{- \frac{x^2}{4}} dx = o(n_0)
\]
while
\[
\sum_{k = n_0+2}^{\infty} \int_{\sqrt{\frac{2 k}{\mu \kappa}}}^{\infty} e^{- \frac{x^2}{4}} dx \geq \sum_{k = n_0+2}^{2 n_0 + 1} \int_{\sqrt{\frac{2 k}{\mu \kappa}}}^{\infty} e^{- \frac{x^2}{4}} dx  \geq n_0 \int_{1}^{\infty} e^{- \frac{x^2}{4}} dx,
\]
thus the claim (\ref{claim}) holds.

It remains to show that $a = 1$. Suppose $a>1$. Omitting the integer part notation, assume that $n_0 = a \mu \kappa$, then
\[
\sum_{k = n_0+1}^{\infty} \int_{\sqrt{\frac{2 k}{\mu \kappa}}}^{\infty} e^{- \frac{x^2}{4}} dx  = \sum_{k = \mu \kappa}^{\infty} \int_{\sqrt{\frac{2 k}{\mu \kappa}}}^{\infty} e^{- \frac{x^2}{4}} dx - \sum_{k = \mu \kappa + 1}^{a \mu \kappa + 1} \int_{\sqrt{\frac{2 k}{\mu \kappa}}}^{\infty} e^{- \frac{x^2}{4}}
\]
and
\[
\sum_{k = 1}^{n_0 - 1} \int_{0}^{\sqrt{\frac{2 k}{\mu \kappa}}} e^{- \frac{x^2}{4}} dx = \sum_{k = 1}^{\mu \kappa}  \int_{0}^{\sqrt{\frac{2 k}{\mu \kappa}}} e^{- \frac{x^2}{4}} dx + \sum_{k = \mu \kappa + 1}^{a \mu \kappa - 1}  \int_{0}^{\sqrt{\frac{2 k}{\mu \kappa}}} e^{- \frac{x^2}{4}} dx
\]
This way, the equation (\ref{I1}) reads as
\[
\sum_{k = 1}^{\mu \kappa}  \int_{0}^{\sqrt{\frac{2 k}{\mu \kappa}}} e^{- \frac{x^2}{4}} dx + \sum_{k = \mu \kappa + 1}^{a \mu \kappa - 1}  \int_{0}^{\sqrt{\frac{2 k}{\mu \kappa}}} e^{- \frac{x^2}{4}} dx + \sum_{k = \mu \kappa + 1}^{a \mu \kappa + 1} \int_{\sqrt{\frac{2 k}{\mu \kappa}}}^{\infty} e^{- \frac{x^2}{4}} \leq \sum_{k = \mu \kappa}^{\infty} \int_{\sqrt{\frac{2 k}{\mu \kappa}}}^{\infty} e^{- \frac{x^2}{4}} dx + O(1), \mu \to \infty,
\]
or
\begin{equation}\label{otnosh}
\sum_{k = 1}^{\mu \kappa}  \int_{0}^{\sqrt{\frac{2 k}{\mu \kappa}}} e^{- \frac{x^2}{4}} dx + 2 \pi \mu \kappa (a-1) \leq  \sum_{k = \mu \kappa}^{\infty} \int_{\sqrt{\frac{2 k}{\mu \kappa}}}^{\infty} e^{- \frac{x^2}{4}} dx + O(1), \mu \to \infty.
\end{equation}
Thus, in order to obtain contradiction, it suffices to show that

\Beqn\label{equality}
\lim_{n \to \infty} \frac{1}{n} \sum_{k = 1}^{n}  \int_{0}^{\sqrt{\frac{2 k}{n}}} e^{- \frac{x^2}{4}} dx = \lim_{n \to \infty} \frac{1}{n} \sum_{k = n}^{\infty} \int_{\sqrt{\frac{2 k}{n}}}^{\infty} e^{- \frac{x^2}{4}} dx.
\Eeqn

Note that
\begin{multline*}
\frac{1}{n} \sum_{k = 1}^{n}  \int_{0}^{\sqrt{\frac{2 k}{n}}} e^{- \frac{x^2}{4}} dx = RS_{\frac{1}{n}}\left[\int_{0}^{\sqrt{2 x}}  e^{- \frac{y^2}{4}} dy, [0,1]\right]
= \int_{0}^{1} \int_{0}^{\sqrt{2 x}}  e^{- \frac{y^2}{4}} dy dx + o_n(1), \ n \to \infty,
\end{multline*}
and
\begin{multline*}
\frac{1}{n} \sum_{k = n}^{\infty} \int_{\sqrt{\frac{2 k}{n}}}^{\infty} e^{- \frac{x^2}{4}} dx = RS_{\frac{1}{n}}\left[\int_{\sqrt{2 x}}^{\infty}  e^{- \frac{y^2}{4}} dy, [1,\infty)\right]
= \int_{1}^{\infty} \int_{\sqrt{2 x}}^{\infty}  e^{- \frac{y^2}{4}} dy dx + o_n(1),  \ n \to \infty,
\end{multline*}
To establish (\ref{equality}), it remains to show that
\Beqn\label{equality1}
\int_{0}^{1} \int_{0}^{\sqrt{2 x}}  e^{- \frac{y^2}{4}} dy dx = \int_{1}^{\infty} \int_{\sqrt{2 x}}^{\infty}  e^{- \frac{y^2}{4}} dy dx.
\Eeqn
Since $\int_{0}^{\infty}  e^{- \frac{y^2}{4}} dy = \sqrt{\pi}$, we have
\[
\int_{0}^{1} \int_{0}^{\sqrt{2 x}}  e^{- \frac{y^2}{4}} dy dx = \sqrt{\pi} - \int_{0}^{1} \int_{\sqrt{2 x}}^{\infty} e^{- \frac{y^2}{4}} dy dx
\]
thus (\ref{equality1}) is equivalent to
\Beqn\label{equality2}
\int_{0}^{\infty} \int_{\sqrt{2 x}}^{\infty}  e^{- \frac{y^2}{4}} dy dx = \sqrt{\pi}
\Eeqn
The latter identity is true since
\[
\int_{0}^{\infty} \int_{\sqrt{2 x}}^{\infty}  e^{- \frac{y^2}{4}} dy dx = \int_{0}^{\infty} \int_{0}^{\frac{y^2}{2}}  e^{- \frac{y^2}{4}} dx dy =  \int_{0}^{\infty} \frac{y^2}{2}  e^{- \frac{y^2}{4}} dy = \int_{0}^{\infty}  e^{- \frac{y^2}{4}} dy = \sqrt{\pi}
\]
Consequently, (\ref{equality}) holds, which yields a contradiction with (\ref{otnosh}) if $a>1$. The case $a<1$ is excluded analogously.

\begin{proof}[Proof of corollary 1.]
The proof follows by observing that when $\mu\kappa$ is small enough,
the left hand side of \eqref{eq_key} dominates the right hand side even for $n_0=1$.
Precisely, if $\mu \kappa \to 0$ and $n_0=1$,
\[
\int_{0}^{\sqrt{\frac{2}{\mu \kappa}}} e^{-\frac{x^2}{4}} dx \to \sqrt{\pi}
\]
while
\[
\sum_{k = 1}^{\infty} \int_{\sqrt{\frac{2k}{\mu \kappa}}}^{\infty} e^{- \frac{x^2}{4}} dx \leq
\sum_{k = 1}^{\infty} \int_{\sqrt{\frac{2k}{\mu \kappa}}}^{\infty} x e^{- \frac{x^2}{4}} dx  = \frac{2 e^{-\frac{2}{\mu \kappa}}}{1 - e^{-\frac{1}{2 \mu \kappa}}} \to 0, \mu \kappa \to 0.
\]
\end{proof}

\subsection{Anisotropic curvature-driven motions.} This subsection is devoted to the proof of Theorem \ref{Th4}. We are going to present the proof assuming $w_{s_{n_0}, j_{n_0}}(\t) = \frac{1}{2}$. The corresponding inequalities if $w_{s_{n_0}, j_{n_0}}(\t) \leq \frac{1}{2}$ or $w_{s_{n_0}, j_{n_0}}(\t) \geq \frac{1}{2}$ can be obtained analogously. As before, we start with noting that
\begin{equation}\label{angle1}
\frac{1}{2} = w_{s_{n_0}, j_{n_0}}(\t) = \sum_{(s^{'} h,j^{'} h) \in \Omega} G_{s^{'} - s_{n_0}}(\a) G_{j^{'} - j_{n_0}}(\a) = \sum_{(s h + s_{n_0} h, j h + j_{n_0} h) \in \Omega} G_{s}(\a) G_{j}(\a)
\end{equation}
Introduce the following sets:
\[
I_1^{\theta}:=\{(s,j): \frac{p}{q} sh < jh < g(sh - s_{n_0} h) - j_{n_0} h\};
\]
\[
I_2^{\theta}:=\{(s,j):g(sh - s_{n_0} h) - j_{n_0} h < jh < \frac{p}{q} s h\}
\]
and
\[
I_3^{\theta}:=\{(s,j): j h < \min \{g(sh - s_{n_0} h) - j_{n_0} h, \frac{p}{q} s h\}\}.
\]
Also, denote
\[
bI_1^{\theta}:=\{(s,j): s = \frac{p}{q} j, jh < g(sh - s_{n_0} h) - j_{n_0} h\}
\]
and
\[
bI_2^{\theta}:=\{(s,j): s = \frac{p}{q} j, jh \geq g(sh - s_{n_0} h) - j_{n_0} h\}
\]
With this notation, (\ref{angle1}) reads as
\[
\sum_{(s,j) \in I_1^{\theta} \cup I_3^{\theta}} G_{s}(\a) G_{j}(\a) + \sum_{(s,j) \in bI_1^{\theta}} G_{s}(\a) G_{j}(\a) = \frac{1}{2}
\]
On the other hand, the properties (\ref{property1}) and (\ref{property2}) of the heat kernel yield that
\[
\sum_{(s,j) \in I_2^{\theta} \cup I_3^{\theta}} G_{s}(\a) G_{j}(\a) + \frac{1}{2} \sum_{(s,j) \in bI_1^{\theta} \cup bI_2^{\theta}} G_{s}(\a) G_{j}(\a) = \frac{1}{2}
\]
thus
\begin{equation}\label{angle_identity}
\sum_{(s,j) \in I_1^{\theta}} G_{s}(\a) G_{j}(\a) + \frac{1}{2} \sum_{(s,j) \in bI_1^{\theta}} G_{s}(\a) G_{j}(\a) = \frac{1}{2} \sum_{(s,j) \in bI_2^{\theta}} G_{s}(\a) G_{j}(\a) + \sum_{(s,j) \in I_2^{\theta}} G_{s}(\a) G_{j}(\a)
\end{equation}
A particular technical difficulty, which is associated with the analysis of (\ref{angle_identity}), stems in the fact that, for given $h>0$, an arbitrary line, whose tangent is different from $\tan \theta$, intersects $I_1^{\theta}$ at most at a bounded (i.e. $O(1)$) number of points. Therefore, if we choose a natural summation order along the vertical and horizontal grid lines, when describing the sum $\sum_{(s,j) \in I_1^{\theta}} G_{s}(\a) G_{j}(\a)$, we cannot apply the asymptotic analysis described above and used in the proof of Theorem \ref{Th2}, i.e. (\ref{expansion_key}). However, there are finitely many lines at angle $\theta$, which intersect $I_1^{\theta}$ at $O\left(\frac{1}{\sqrt{h}}\right)$ number of points. Specifically, those are the lines  at angle $\theta$ that pass through the points $(s_i,j_i)$ for $1 \leq i \leq n_0-1$. Therefore, we will have to perform the asymptotic analysis only along those lines. We proceed with asymptotic expansions of the terms in (\ref{angle_identity}):
\begin{enumerate}
\item
\[
\sum_{(s,j) \in bI_1^{\theta}} G_{s}(\a) G_{j}(\a) = \sum_{s=s_{n_0}^{-}}^{s_{n_0}^{+}} G_{s q}(\a) G_{s p}(\a)
\]
where, omitting the integer part notation
\[
s_{n_0}^{\pm} = \pm \sqrt{\frac{2}{h \kappa}} \frac{\sqrt{p s_{n_0} - q j_{n_0}}}{(p^2+q^2)^{3/4}} - \frac{s_{n_0}}{q}
\]
We next perform the same rescaling as in (\ref{sum1}):
\[
\sum_{s=s_{n_0}^{-}}^{s_{n_0}^{+}} G_{s q}(\a) G_{s p}(\a) = \int_{s_{n_0}^{-}}^{s_{n_0}^{+}} G_{[s] q}(\a) G_{[s] p}(\a) ds = \frac{\sqrt{\t}}{h}\int_{\frac{h}{\sqrt{\t}} s_{n_0}^{-}}^{\frac{h}{\sqrt{\t}} s_{n_0}^{+}} G_{\left[ \frac{\sqrt{\t}}{h} x\right] q}(\a) G_{\left[\frac{\sqrt{\t}}{h} x \right] p }(\a) dx
\]
Note that for $t = \mu h$
\[
s_{n_0}^{\pm} \frac{h}{\sqrt{\t}} = \pm \sqrt{\frac{2}{\mu \kappa}} \frac{\sqrt{p s_{n_0} - q j_{n_0}}}{(p^2+q^2)^{3/4}} + \frac{s_{n_0} \sqrt{h}}{q \sqrt{\mu}} = \pm \sqrt{\frac{2 d_{n_0}}{\mu \kappa (p^2+q^2)}} + \frac{s_{n_0} \sqrt{h}}{q \sqrt{\mu}}.
\]
Thus, using the same reasoning as in the proof of Theorem \ref{Th2}, (\ref{sum2}), we have
\begin{eqnarray}\label{angle_exp1}
\sum_{s=s_{n_0}^{-}}^{s_{n_0}^{+}} G_{s q}(\a) G_{s p}(\a) = \frac{2}{4 \pi} \frac{\sqrt{h}}{\sqrt{\mu}}\int_{0}^{\sqrt{\frac{2 d_{n_0}}{\mu \kappa (p^2+q^2)}}} e^{-\frac{(p^2 + q^2) x^2}{4}} dx + O(h \ln h) = \\
\nonumber = \frac{2}{4 \pi} \frac{\sqrt{h}}{\sqrt{\mu}} \frac{1}{\sqrt{p^2+q^2}} \int_{0}^{\sqrt{\frac{2 d_{n_0}}{\mu \kappa}}} e^{-\frac{x^2}{4}} dx + O(h \ln h)
\end{eqnarray}
\item
Similarly,
\begin{eqnarray}\label{angle_exp2}
  \sum_{(s,j) \in bI_2^{\theta}} G_{s}(\a) G_{j}(\a) = \sum_{s = -\infty}^{s_{n_0}^{-} -1} G_{s q}(\a) G_{s p}(\a) + \sum_{s = s_{n_0}^{+} +1}^{\infty} G_{s q}(\a) G_{s p}(\a) = \\
  \nonumber =\frac{2}{4 \pi} \frac{\sqrt{h}}{\sqrt{\mu}} \frac{1}{\sqrt{p^2+q^2}} \int_{\sqrt{\frac{2 d_{n_0}}{\mu \kappa}}}^{\infty} e^{-\frac{x^2}{4}} dx + O(h)
\end{eqnarray}
\item
We have
\[
\sum_{(s,j) \in I_1^{\theta}} G_{s}(\a) G_{j}(\a) = \sum_{i=1}^{n_0-1} \sum_{s=s_i^{-}}^{s_i^{+}} G_{s_i-s_{n_0} + s q}(\alpha)G_{j_i-j_{n_0} + s p}(\alpha)
\]
where
\[
s_{i}^{\pm} = \pm \sqrt{\frac{2}{h \kappa}} \frac{\sqrt{p s_{i} - q j_{i}}}{(p^2+q^2)^{3/4}} - \frac{s_{i}}{q}.
\]


Proceeding as in item 1, every $i$ we have
\[
\sum_{s=s_i^{-}}^{s_i^{+}} G_{s_i-s_{n_0} + s q}(\alpha)G_{j_i-j_{n_0} + s p}(\alpha) = \frac{\sqrt{\t}}{h}\int_{\frac{h}{\sqrt{\t}} s_{i}^{-}}^{\frac{h}{\sqrt{\t}} s_{i}^{+}} G_{\left[ \frac{\sqrt{\t}}{h} x\right] q + s_i-s_{n_0}}(\a) G_{\left[\frac{\sqrt{\t}}{h} x \right] p + j_i-j_{n_0}}(\a) dx.
\]
Since for $\t = \mu h$ we have $\frac{\sqrt{\t}}{h} x p + j_i-j_{n_0} = \frac{\sqrt{\t}}{h} \left(x p - \frac{\sqrt{h}}{\sqrt{\mu}}( j_i-j_{n_0})\right)$,  $\frac{\sqrt{\t}}{h} x q + s_i-s_{n_0} = \frac{\sqrt{\t}}{h} \left(x q - \frac{\sqrt{h}}{\sqrt{\mu}}( s_i-s_{n_0})\right)$, and
\[
s_{i}^{\pm} \frac{h}{\sqrt{\t}} = \pm \sqrt{\frac{2}{\mu \kappa}} \frac{\sqrt{p s_{i} - q j_{i}}}{(p^2+q^2)^{3/4}} + \frac{s_{i} \sqrt{h}}{q \sqrt{\mu}} = \pm \sqrt{\frac{2 d_{i}}{\mu \kappa (p^2+q^2)}} + \frac{s_{i} \sqrt{h}}{q \sqrt{\mu}}.
\]
we have
\begin{multline}\label{angle_exp3_prel}
\sum_{s=s_{i}^{-}}^{s_{i}^{+}}G_{s_i-s_{n_0} + s q}(\alpha)G_{j_i-j_{n_0} + s p}(\alpha) = \\
\frac{2}{4 \pi} \frac{\sqrt{h}}{\sqrt{\mu}} \int_{0}^{\sqrt{\frac{2 d_{i}}{\mu \kappa (p^2+q^2)}}} e^{-\frac{p^2 \left(x - \frac{\sqrt{h}}{\sqrt{\mu}}( j_i-j_{n_0}) \right)^2}{4}}  e^{-\frac{q^2\left(x - \frac{\sqrt{h}}{\sqrt{\mu}}( s_i-s_{n_0})\right)^2}{4}} dx
+ O(h \ln h)
\end{multline}
Finally, using the expansion $e^{-(x+ a\sqrt{h})^2} = e^{-x^2} - 2 a x \sqrt{h} e^{-x^2} + o(\sqrt{h})$, and changing variables $y \to \sqrt{p^2+q^2} x$, (\ref{angle_exp3_prel}) yields
\begin{eqnarray}\label{angle_exp3}
\sum_{s=s_{i}^{-}}^{s_{i}^{+}}G_{s_i-s_{n_0} + s q}(\alpha)G_{j_i-j_{n_0} + s p}(\alpha)  =
\frac{2}{4 \pi} \frac{\sqrt{h}}{\sqrt{\mu}} \frac{1}{\sqrt{p^2+q^2}} \int_{0}^{\sqrt{\frac{2 d_{i}}{\mu \kappa}}} e^{-\frac{x^2}{4}} dx.
\end{eqnarray}
Therefore
\begin{equation}\label{angle_exp3F}
\sum_{(s,j) \in I_1^{\theta}} G_{s}(\a) G_{j}(\a) = \frac{2}{4 \pi} \frac{\sqrt{h}}{\sqrt{\mu}} \frac{1}{\sqrt{p^2+q^2}} \sum_{i=1}^{n_0-1} \int_{0}^{\sqrt{\frac{2 d_{i}}{\mu \kappa}}} e^{-\frac{x^2}{4}} dx + o(\sqrt{h})
\end{equation}
\item For the last term in (\ref{angle_identity}), we may use the same notation and reasoning as in part 3. Note, that if $p \neq 0$, the asymptotic expansion of this term can be obtained somewhat easier then the one in Theorem 2 (where $p = 0$) since there is no need to establish the upper and lower bounds in this case. We have
\begin{eqnarray}\label{angle_exp4}
\sum_{(s,j) \in I_2^{\theta}} G_{s}(\a) G_{j}(\a) = 2 \sum_{i = n_0 + 1}^{\infty} \sum_{s = s_{i}^{+}}^{\infty} G_{s_i-s_{n_0} + s q}(\a) G_{j_i-j_{n_0} + s p}(\a) = \\
\nonumber = \frac{2}{4 \pi} \frac{\sqrt{h}}{\sqrt{\mu}} \frac{1}{\sqrt{p^2+q^2}} \sum_{i=n_0+1}^{\infty} \int_{\sqrt{\frac{2 d_{i}}{\mu \kappa}}}^{\infty} e^{-\frac{x^2}{4}} dx + o(\sqrt{h}).
\end{eqnarray}
\end{enumerate}
This completes the proof of Theorem \ref{Th4}.
\vspace{-0.5cm}
\section{Numerical experiments}
In this section we implemented the scheme (\ref{semidiscrete}) numerically for various initial data $\Omega_0$ and parameters $t$ and $h$. The simulation is performed using Matlab and Fast Fourier Transform.

We start with observing the evolution of a circle and of a dumpbell-shaped domain. In the following figures, the blue contour indicates the initial configuration, the red contour indicates the final configuration after 10 steps. Notice the evolution picture for the circle does not have the red contour since the circle vanishes in less then 10 steps:

\vspace{10pt}

\begin{minipage}[c]{2.5in}
\begin{center}
\includegraphics [width = 2.8in]{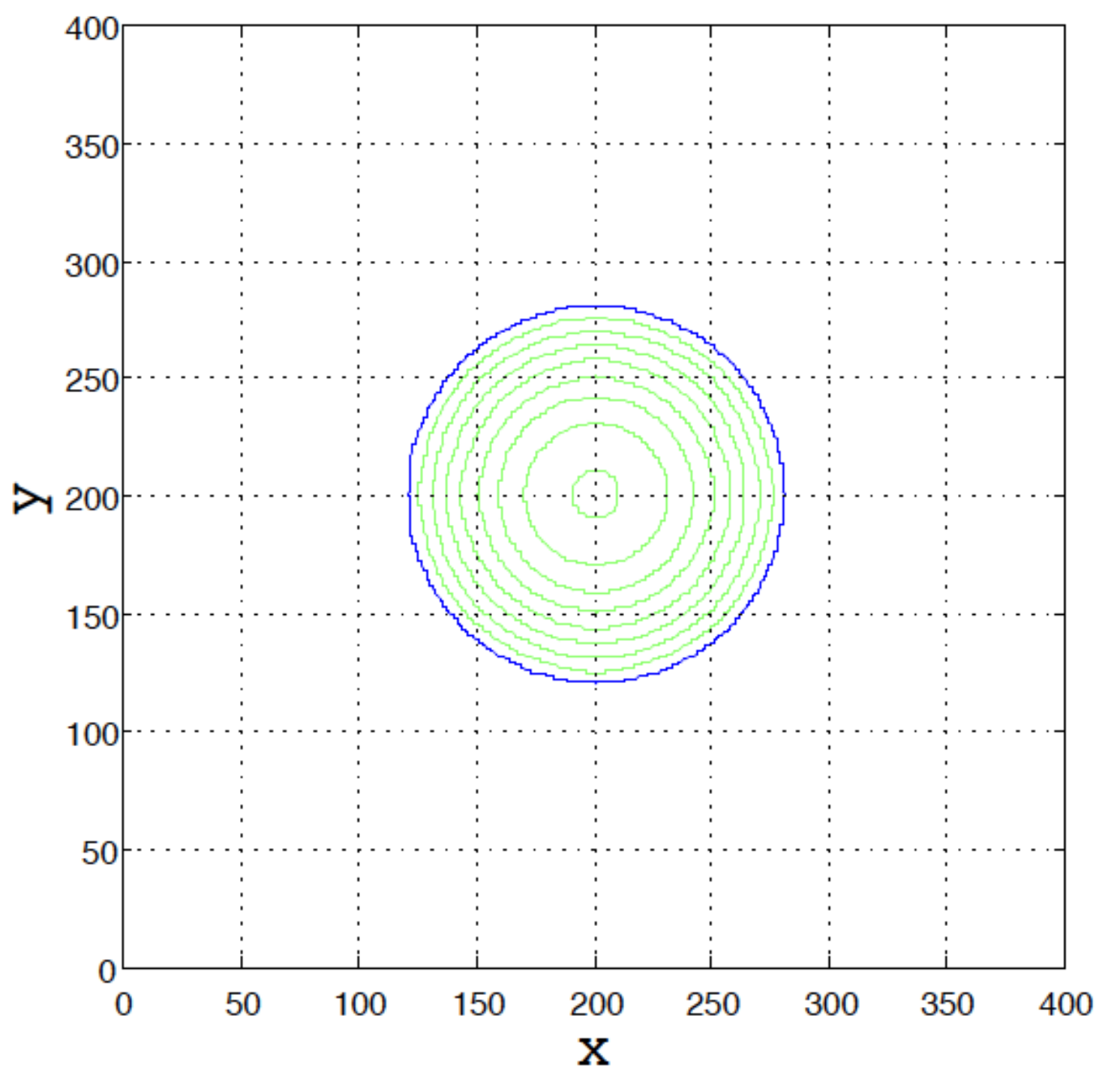}\\
Evolution of a circle.
\end{center}
\end{minipage}\hspace{50pt}
\begin{minipage}[c]{2.5in}
\begin{center}
\includegraphics [width = 2.7in]{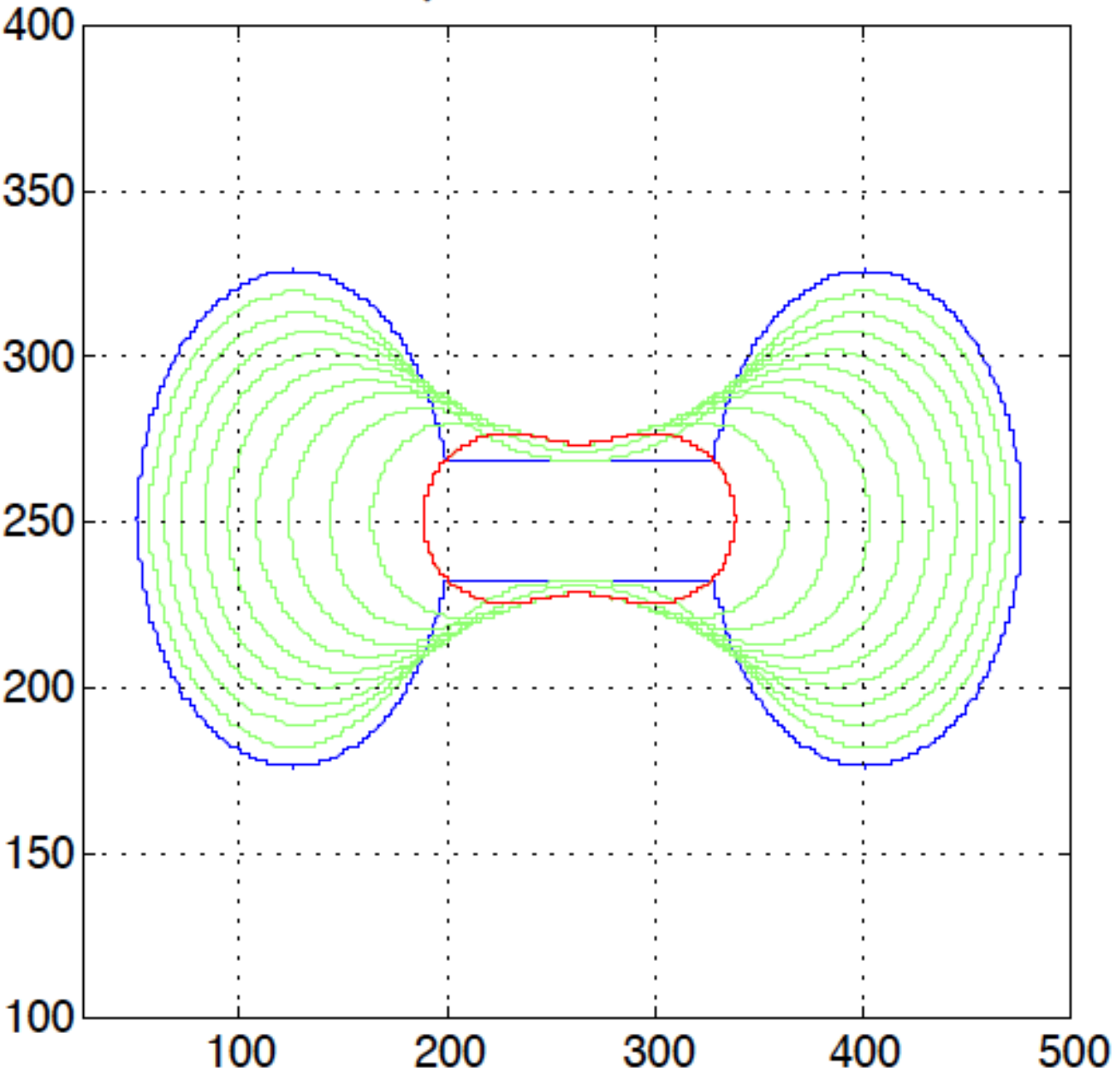}\\
Evolution of a dumpbell.
\end{center}
\end{minipage}

\vspace{10pt}

We investigate in detail the motion of a circle. The figures below show how the radius of a circle changes for several values of the parameter $\mu$ between $0.3$ and $1$, as well as for various values of $h$ between $0.05$ and $0.1$. The black circled curve is the exact mean curvature motion.

\begin{minipage}[c]{3in}
\begin{center}
\includegraphics [height = 2in]{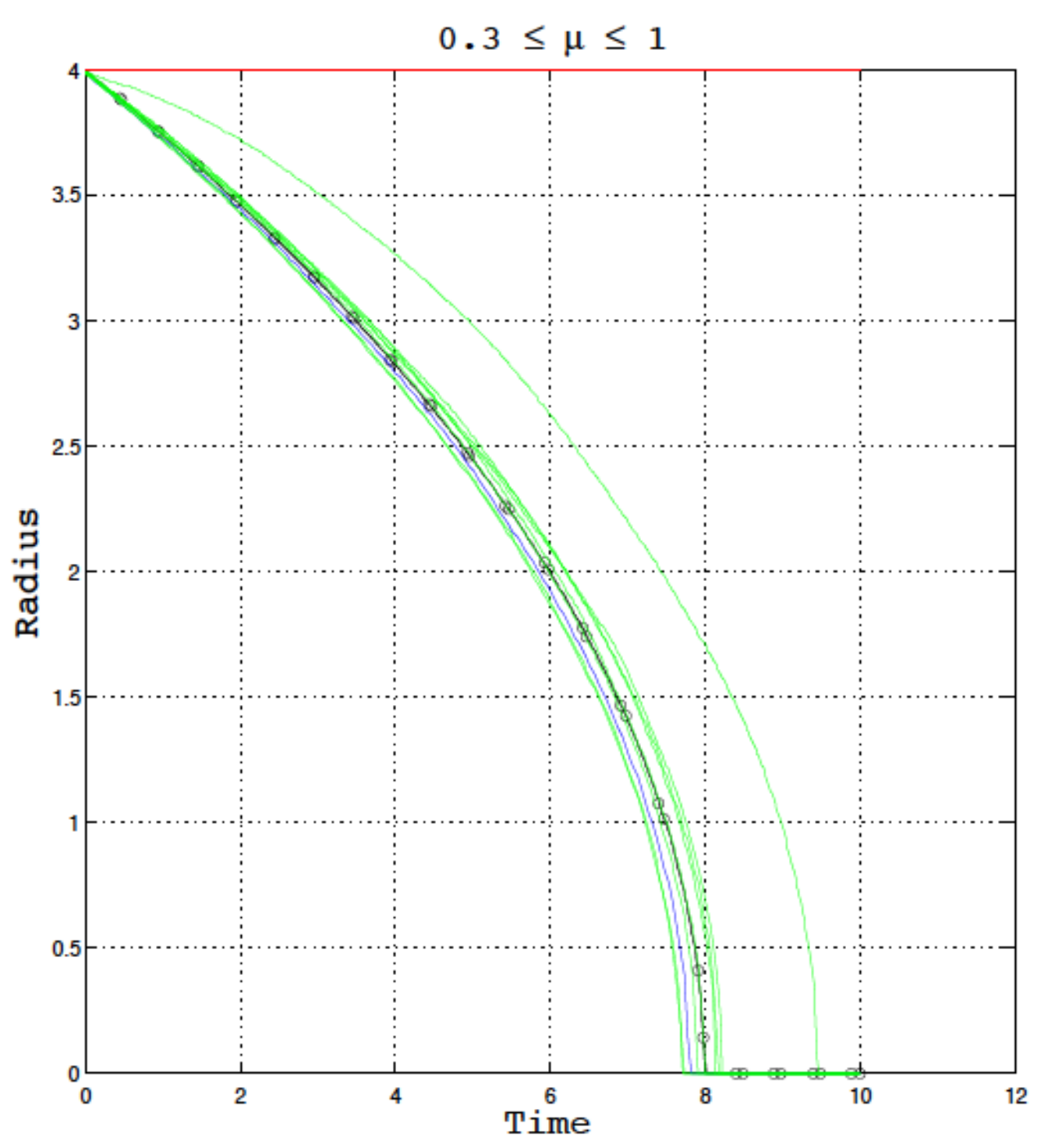}
\label{CirRadMeu}\\
Evolution of the circle radius with $\mu$.
\end{center}
\end{minipage}
\begin{minipage}[c]{3in}
\begin{center}
\includegraphics [height = 2in]{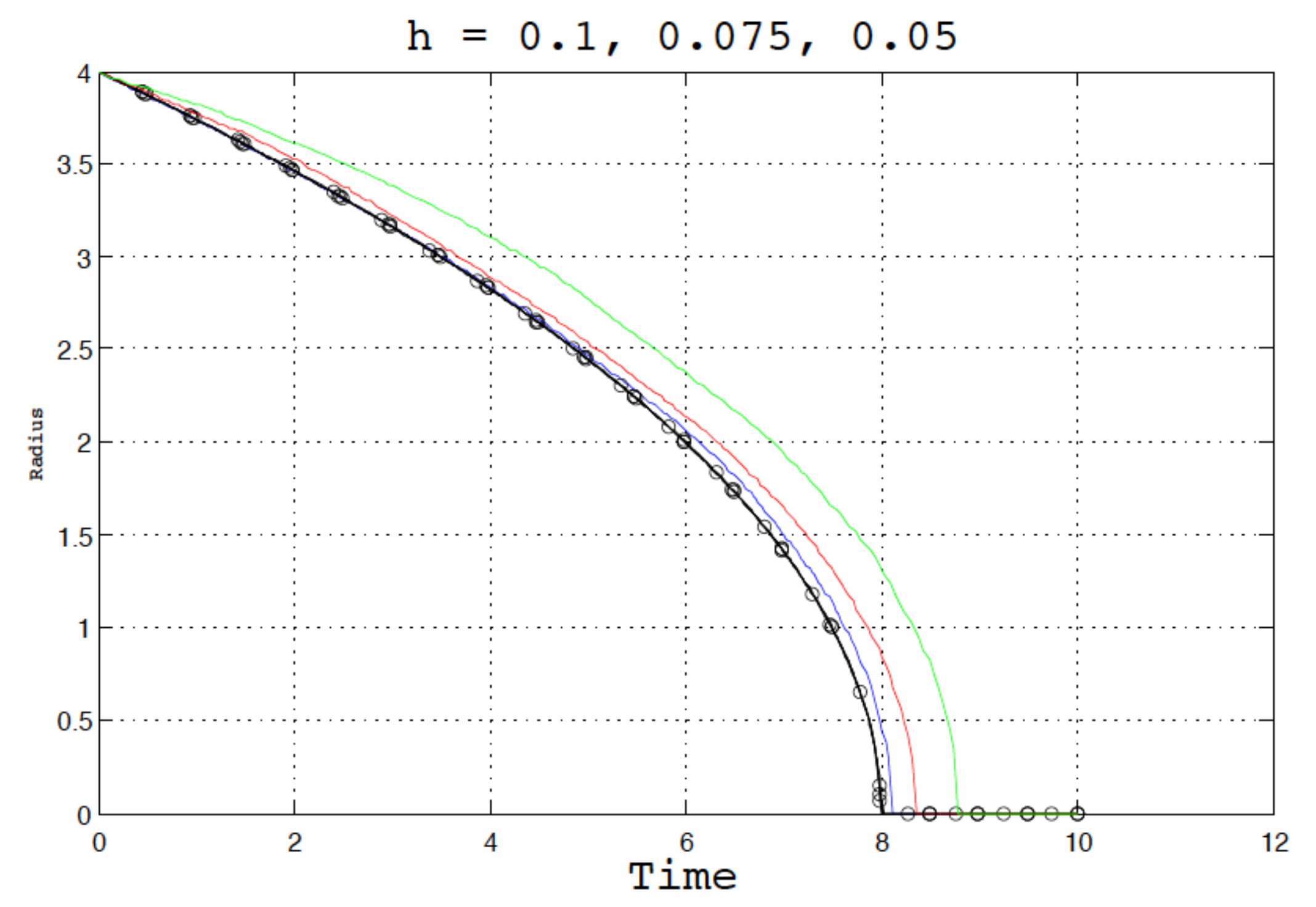}
\label{CirRadH}\\
Evolution of the circle radius with $h$.
\end{center}
\end{minipage}

\vspace{10pt}

The next figures illustrate the anisotropic behavior described in Theorem \ref{Th4}. We start with a finger-shaped domain, tilted at angle $45^{\circ}$ and verify that it moves with constant velocity.

\begin{minipage}[c]{3in}
\begin{center}
\includegraphics [height = 2in]{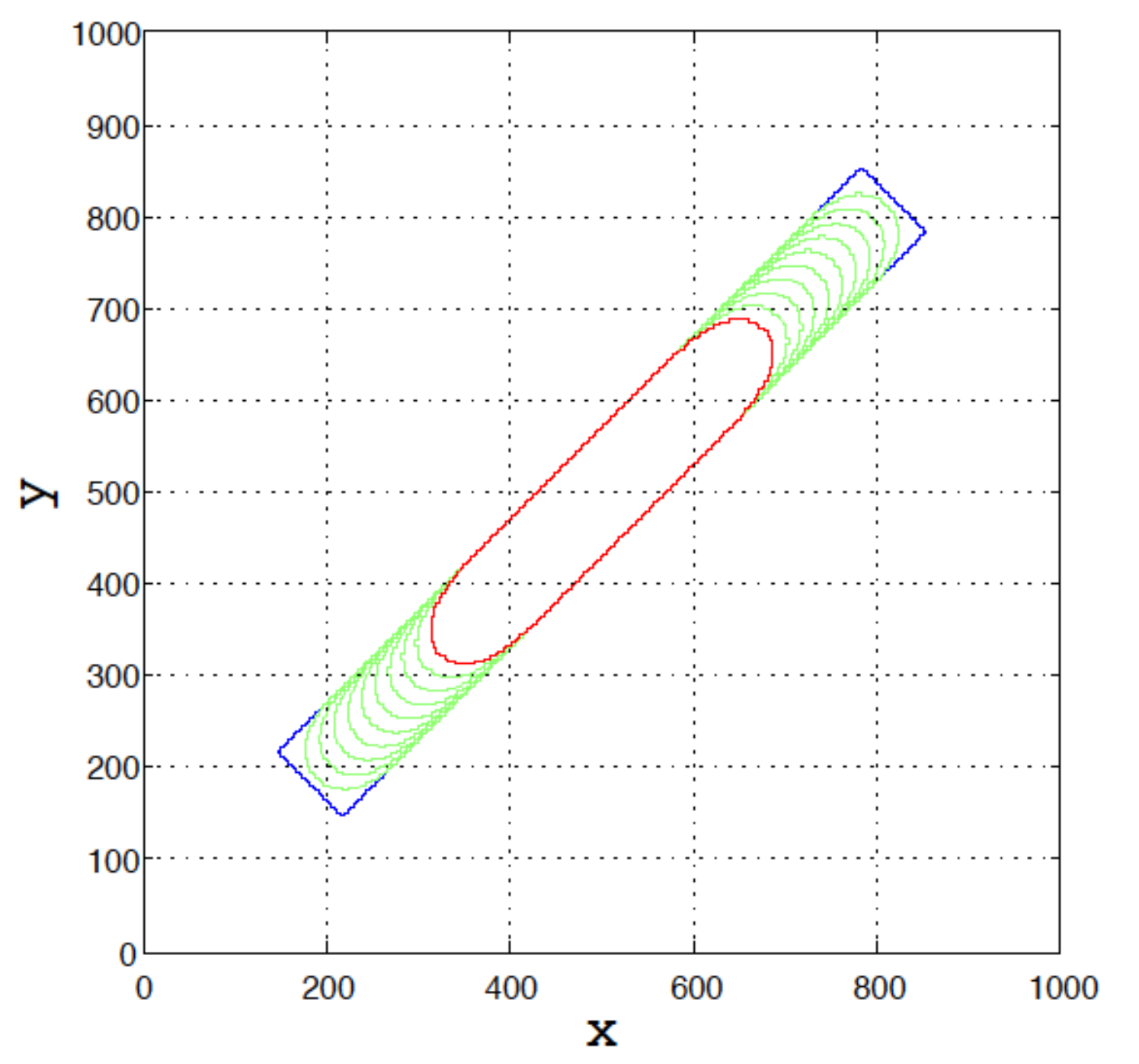}
\label{FigFin}\\
Angular motion example: $\theta = 45^{\circ}$
\end{center}
\end{minipage}
\begin{minipage}[c]{3in}
\begin{center}
\includegraphics [height = 2in]{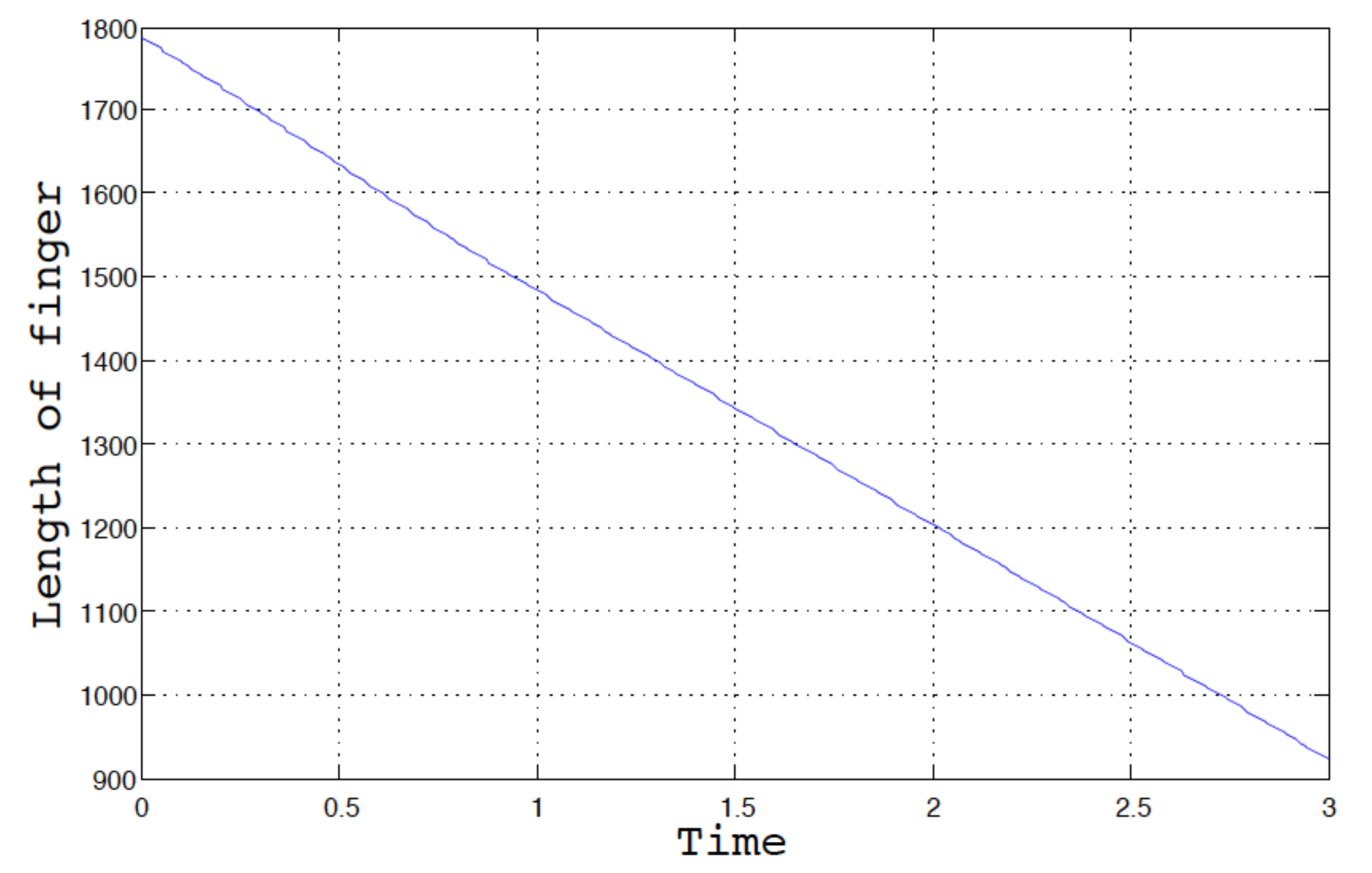}
\label{FigFinVel}\\
\vspace{-15pt}
Angular motion velocity: $\theta = 45^{\circ}$
\end{center}
\end{minipage}

\vspace{10pt}

We conclude the section with calculating the velocity of the finger-shaped domains for the angles $0^{\circ}, 1^{\circ}, 2^{\circ}, ..., 45^{\circ}$, with 5 values of $\mu$ between $0.5$ and $1$. The results are illustrated below.

\begin{center}
\includegraphics [width = 4in]{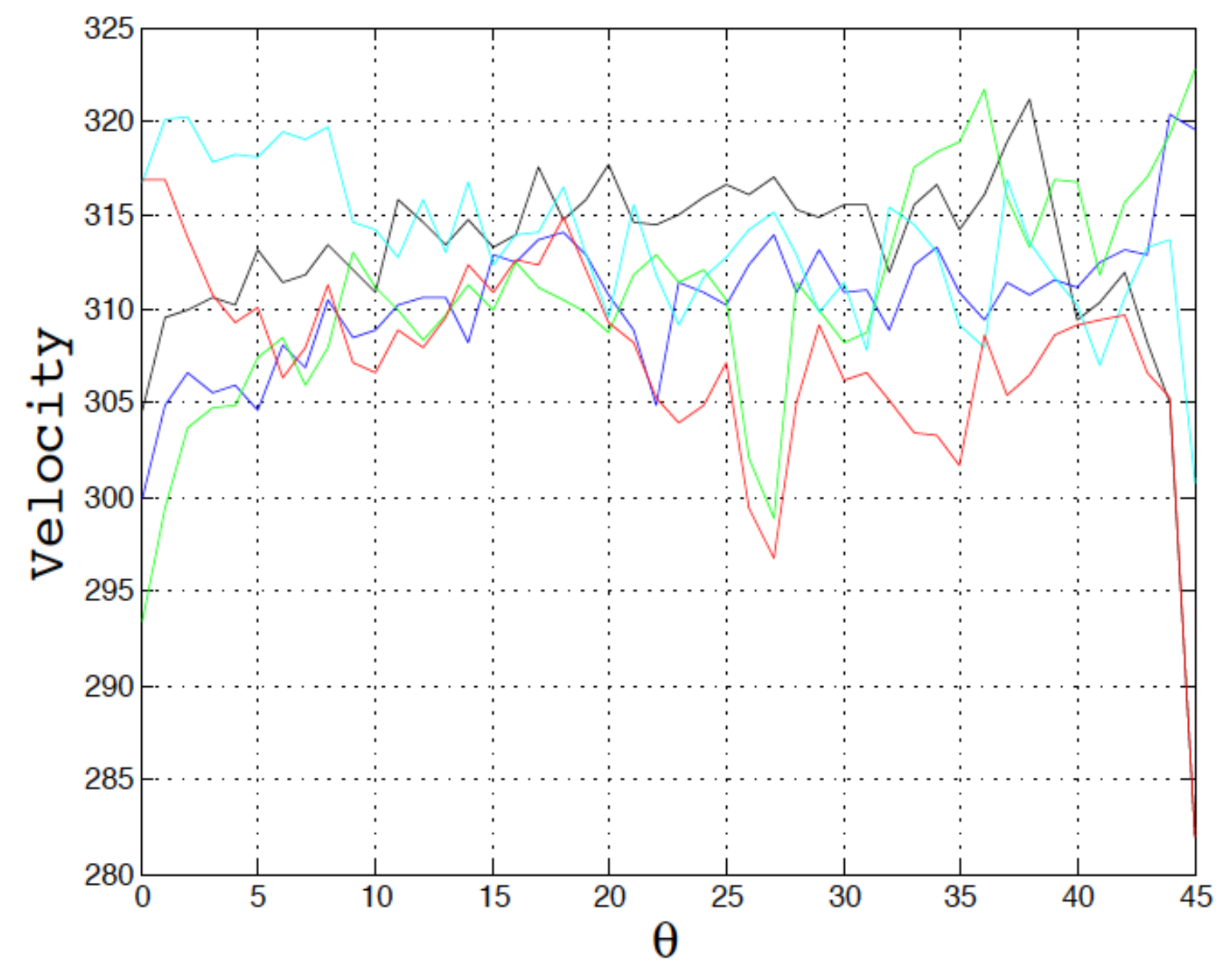}\\
Velocities for various angles and values of $\mu$.
\end{center}


\section{Acknowledgement} The authors are grateful to Dr. Vadym Doroshenko for the help with the numerical simulation.


%
%
%

\bibliography{bibliographymmc.bib}

\end{document}